\documentclass[a4paper,10pt]{amsart}


\usepackage{comment}
\usepackage[font+=Large, labelformat=empty, skip=10pt]{subcaption}

\usepackage{amssymb,amsmath,amsthm,ascmac,array,bm}
\usepackage{xcolor,graphicx}

\usepackage{mathtools}

\usepackage{enumitem}



\captionsetup[table]{name=Table, labelfont=bf}


\usepackage[
colorlinks=true,bookmarks=true,
bookmarksnumbered=true,bookmarkstype=toc,linktocpage=true
]{hyperref}


\newtheorem{thm}{Theorem}[section]

\newtheorem{lem}[thm]{Lemma}

\newtheorem{rem}[thm]{Remark}
\newtheorem{assump}[thm]{Assumption}

\numberwithin{equation}{section}
\allowdisplaybreaks

\newcommand{\mcl}{\mathcal{L}}

\newcommand{\mbbn}{\mathbb{N}}

\newcommand{\mbbr}{\mathbb{R}}

\newcommand{\al}{\alpha} \newcommand{\be}{\beta} 
\newcommand{\ep}{\epsilon} \newcommand{\vp}{\varphi} 
\newcommand{\del}{\delta} \newcommand{\D}{\Delta} 
\newcommand{\sig}{\sigma}  
 \newcommand{\Gam}{\Gamma}
\newcommand{\lam}{\lambda} 
\newcommand{\p}{\partial}
\newcommand{\ra}{\rangle} \newcommand{\la}{\langle}
\newcommand{\cil}{\xrightarrow{\mcl}} 
\newcommand{\cip}{\xrightarrow{p}} 

\newcommand{\argmin}{\mathop{\rm argmin}}
\newcommand{\argmax}{\mathop{\rm argmax}}


\def\nn{\nonumber}

\def\tcr#1{\textcolor{black}{#1}}

\def\rev#1{\textcolor{black}{#1}} 

\newcommand{\pr}{P} \newcommand{\E}{E}

\newcommand{\sgn}{{\rm sgn}}

\def\sumj{\sum_{j=1}^{n}}

\newcommand{\az}{\alpha_0}

\newcommand{\sz}{\sigma_0}
\newcommand{\tz}{\theta_{0}}
\newcommand{\mz}{\mu_{0}}
\newcommand{\tes}{\hat{\theta}_{n}}
\newcommand{\aes}{\hat{\alpha}_{n}}
\newcommand{\bes}{\hat{\beta}_{n}}

\newcommand{\mes}{\hat{\mu}_{n}}
\newcommand{\ses}{\hat{\sigma}_{n}}




\title[On estimation of skewed stable linear regression]
{On estimation of skewed stable linear regression}

\author{Eitaro Kawamo}
\address{Graduate School of Mathematics, Kyushu University, 744 Motooka Nishi-Ku Fukuoka 819-0395, Japan}
\email{kawamo.eitaro.266@s.kyushu-u.ac.jp}

\author{Hiroki Masuda}
\address{Graduate School of Mathematical Sciences, University of Tokyo, 3-8-1 Komaba Meguro-ku Tokyo 153-8914, Japan}
\email{hmasuda@ms.u-tokyo.ac.jp}
\date{\today}

\keywords{Cauchy quasi-likelihood, linear regression, skewed stable distribution.}


\begin{document}
\maketitle
\mathtoolsset{showonlyrefs=true}

\begin{abstract}
We study the parameter estimation method for linear regression models with possibly skewed stable distributed errors. Our estimation procedure consists of two stages: first, for the regression coefficients, the Cauchy quasi-maximum likelihood estimator (CQMLE) is considered after taking the differences to remove the skewness of noise, and we prove its asymptotic normality and tail-probability estimate; second, as for stable-distribution parameters, we consider the moment estimators based on the symmetrized and centered residuals and prove their $\sqrt{n}$-consistency. To derive the $\sqrt{n}$-consistency, we essentially used the tail-probability estimate of the CQMLE. The proposed estimation procedure has a very low computational load and is much less time-consuming compared with the maximum-likelihood estimator. Further, our estimator can be effectively used as an initial value for the numerical optimization of the log-likelihood.
\end{abstract}

\section{Introduction}

\subsection{Objective and background}
\label{hm:sec_st.reg.setup}

Let $S^{0}_{\alpha} (\beta,\sigma,\delta)$ be the stable distribution with the continuous parameterization, which is given by the characteristic function
\begin{equation}
\varphi (u;\vartheta) := 
\exp \left\{ i \delta u - (\sigma |u|)^{\alpha} \left( 1 + i \beta \sgn(u) \tan \dfrac{\alpha \pi}{2} \left( |\sigma u|^{1 - \alpha} - 1 \right) \right) \right\},
\label{hm:0stable.cf}
\end{equation}
where $\vartheta:=(\al,\be,\sig,\delta)\in(0,2]\times[-1,1]\times(0,\infty)\times\mbbr$.
This is called Nolan's $0$-parametrization \cite[Definition 1.4]{Nol98}, which is continuous in all the parameters.
Note that the possibly skewed Cauchy case is then defined to be the limiting case for $\al\to 1$ through $\lim_{\alpha \to 1} \tan(\al\pi/2) (|u|^{1 - \alpha} - 1) = (2/\pi)\log |u|$:
\begin{equation}
\varphi (u;1,\beta,\sig,\delta) = \lim_{\al\to 1}\varphi (u;\vartheta)
=\exp \left\{ i \delta u - \sigma |u| \left( 1 + i \beta \dfrac{2}{\pi} \sgn (u) \log (\sigma |u|) \right) \right\}.
\label{hm:0stable.cf_al=1}
\end{equation}
It is known that $S_{\alpha}^0 (\beta,\sigma,\delta)$ for $\beta\in(-1,1)$ admits an everywhere positive smooth Lebesgue density; 
we refer to \cite{Sat99} and \cite{Sha69} for details.
We denote by $\phi_{\al,\be}(x)$ the probability density function of $S^{0}_{\al}(\be, 1, 0)$:
\begin{equation}
    \phi_{\al,\be}(x) := \frac{1}{2\pi} \int e^{-ixu}\vp(u;\al,\beta,1,0)du.
\end{equation}
We consider the linear regression model
\begin{equation}
Y_j = X_j \cdot \mu + \sig \ep_j,
\label{hm:model}
\end{equation}
where $X_j\in\mbbr^q$ is a non-random vector of dimension $q$, and
\begin{equation}
\label{hm:noise}
\ep_1,\ep_2,\dots \sim \text{i.i.d.}~S_\al^0(\beta,1,0).
\end{equation}
Here and in what follows the dot denotes the usual inner product.
\tcr{We are concerned with estimation of the unknown parameter $\theta := (\al,\beta,\sig,\mu) \in \Theta$ from a sample $\{(X_j,Y_j)\}_{j=1}^{n}$. Throughout this paper, we use $\Theta$ to denote the parameter space for the full parameter vector $\theta = (\alpha, \beta, \sigma, \mu)$, which is assumed to be a bounded convex subset of 
\begin{equation}
(0,2) \times (-1,1) \times (0,\infty) \times \mathbb{R}^q,
\end{equation}
and also it is assumed that there exists a true value $\tz \coloneqq (\al_0,\beta_0,\sig_0,\mu_0)\in\Theta$.}
We denote by $\pr_\theta$ the distribution of $\{(Y_j,\ep_j)\}_{j=1}^{\infty}$ and by $\E_\theta$ the corresponding expectation operator. 

\rev{The stable (regression) model has various applications such as electric-price estimation through the application of stable alpha regressions \cite{Agu.et.al19} and animal movement GPS telemetry data \cite{Mut17}}.
Parameter estimation of the stable regression model \eqref{hm:model} has a long history.
Typically, one employs the maximum-likelihood estimator (MLE) with numerical integration (for the density) and optimization, see \cite[Chapter 5]{Nol20} and the references therein and also \cite[Section 5.2]{NikSha95} especially for the symmetric case; see also \cite{Mat21} for the i.i.d.-$S_\al^0(\beta,\sig,\delta)$ setting and \cite{AndCalDav09} for the time series model.
However, it can be rather time-consuming and, more seriously, suffers from the local solution problem,
the latter being often inevitable for non-concave log-likelihood cases.
\rev{
Further, in the presence of skewness, most well-known estimators, including the LAD estimator \cite{Kni98}, the trimmed LSE \cite{Nol13}
\footnote{We appreciate the anonymous referee for drawing our attention to Nolan and Ojeda-Revah \cite{Nol13}.} (also \cite{Nol20}), fractional-order moment estimators \cite{NikSha95}, and the empirical-characteristic-function based estimation which goes back to \cite{Pre72} and \cite{Kou80-2} do not effectively work to estimate all the components of $\theta$. 
}

In this paper, we address this problem by introducing simple and easy-to-compute estimators based on the Cauchy density and the method of lower-order fractional moments (see also Remark \ref{hm:rem_LADE}).
\tcr{Our proposed estimators are based on sample residuals of the form $\hat{\ep}_j = Y_j - X_j \cdot \mes$, where $\mes$ is a preliminary estimator of the regression coefficient. These residuals are used to approximate the distributional properties of the noise $\ep_j$ and to construct moment-based estimators for the parameters $(\alpha, \beta, \sigma)$.}
As a result, we derived the estimator that is $\sqrt{n}$-consistent and that can be used as the initial value for numerical optimization of the log-likelihood function; although we will not prove the asymptotic normality of the proposed estimator, it would be trivial from the proof while the resulting asymptotic covariance matrix takes a rather complicated form.
We will refer to the estimators introduced and studied in Sections \ref{sec:CQMLE} and \ref{sec:stable} as \textit{initial estimator}. The initial estimator consists of the Cauchy quasi-maximum likelihood estimator (CQMLE) $\hat{\mu}_n$ and $(\aes,\bes,\ses)$ obtained from the method of moments, introduced in Sections \ref{sec:CQMLE} and \ref{sec:stable}, respectively.

\medskip

\rev{Here is a small numerical example of the MLE.}
In Table \ref{tab:numerical}, we computed the initial estimator and the MLE when $q=3$ with the explanatory variables $X_j$ being generated independently from the uniform distribution $U(1,5)$. The MLE is computed by changing the initial value of the optimization, as indicated in the caption of Table \ref{tab:numerical}. \rev{We used R package \texttt{stabledist} to calculate the value of the probability density function, and Table \ref{tab:numerical} shows the results obtained from a one-time simulation.} 
We see the numerical optimization of the MLE may be rather unstable with long computation time if the starting values are far from the unknown true value, while the MLE numerically guided by the proposed initial estimators is much more stable, 
\rev{
implying that the numerical instability should be due to the local-maxima nature of the log-likelihood. 
Although not presented here, we found no instability when the true value of $\al$ was close to $1$ and the initial value for the numerical search of MLE was set to be the true values so that the local-maxima problem does not occur; this shows the effectiveness of using the 0-parametrization.
}

Having obtained the easy-to-compute initial estimator at rate $\sqrt{n}$, it would be natural from a theoretical point of view to consider the Newton-Raphson type one-step estimator (for example, \cite[Section 20]{Fer96}) to construct an asymptotically efficient estimator. However, we found that the numerical computation can be unstable and we do not pursue it in this paper; see also Remark \ref{hm:rem_onestep}.

\begin{table}[ht]
 \centering
  \begin{tabular}{ccccccc}
   \hline
   Method & $n$ &$\hat{\alpha}_n$ & $\hat{\beta}_n$ & $\hat{\sigma}_n$ & $\hat{\mu}_n$ & Time(s) \\
   \hline \hline
   Initial estimator & 300 & 0.58 & 0.79 & 1.08 & (4.63,1.77,3.08) & $7.1 \times 10^{-3}$ \\
   MLE (i) & 300 & 0.25 & -0.80 & 248.00 & (-4.45,-2.94,-2.61) & 5198.90 \\
   MLE (ii) & 300 & 0.85 & 0.45 & 1.53 & (5.02,1.98,3.04) & 500.20 \\
   \hline 
   True value && 0.8 & 0.5 & 1.5 & (5,2,3)\\
   \hline
  \end{tabular}
  
\caption{Parameter estimates and computational time (seconds) for the three cases.
For the initial values for numerical optimization of MLE, we used:
(i) $(1,0,1,(100,100,100))$ for the bad case where initial values are far from the true values;
(ii) $(0.58,0.79,1.08,(4.63, 1.77, 3.08))$ obtained by the proposed initial estimator: the CQMLE and the moment estimator.
}
  \label{tab:numerical}
\end{table}

\subsection{Outline and summary}

In Section \ref{sec:CQMLE}, we estimate the regression coefficient by the CQMLE without assuming that the errors are stable-distributed. We prove the asymptotic normality and the tail-probability estimate of the normalized CQMLE.
In Section \ref{sec:stable}, we estimate the stable distribution parameter $(\alpha,\beta,\sigma)$ by the method of moments through the residual sequence based on the CQMLE and showed the $\sqrt{n}$-consistency of $(\aes, \bes,\ses,\mes)$.
Importantly, it is essential in our proof that the CQMLE satisfies the polynomial-type tail-probability estimate.
As a consequence, we obtain a practical $\sqrt{n}$-consistent estimator, that can be computed in a flash as was seen in Table \ref{tab:numerical}.
In Section \ref{sec:numerical}, we extensively conduct numerical experiments, by drawing histograms and boxplots of the initial estimator when we estimated the regression coefficient by the CQMLE and, just for comparison, the other two estimators (the least-squares and the least absolute deviation estimators). Also shown are some histograms of the normalized MLE using the initial estimators as the starting value for optimization, from which it can be seen that we had no local optimum problem.

\section{CQMLE of location parameter}
\label{sec:CQMLE}

Throughout this section, we temporarily forget the assumption \eqref{hm:noise} on the noise distribution 
\rev{
so that the noise distribution $\mcl(\ep_1)$ may not be stable, 
and consider a broader setting for the noise distribution; see Assumption \ref{assump_AN_TPE}\eqref{assump_noise} together with Remark \ref{hm:rem_assump}\eqref{hm:rem_noise}. 
To remove the skewness of $\mcl(\ep_1)$, we take the difference in the model \eqref{hm:model}}:
\begin{equation}\label{hm:model_d}
\D_j Y = \D_j X \cdot \mu + \sig \D_j \ep,
\end{equation}
where $\D_j\zeta := \zeta_j - \zeta_{j-1}$ for $\zeta=X$, $Y$, and $\ep$.
\rev{Then, the random variables $\D_1 \ep,\D_2 \ep,\dots$ are not i.i.d. but form a $1$-dependent sequence. If in particular \eqref{hm:0stable.cf} holds, then $\D_1 \ep \sim S_{\al}^{0}(0, 2^{1/\al}\sig, 0)$.}

We define the CQMLE
by any element
\begin{equation}
\hat{\mu}_n \in \argmax_{\mu \in \Theta_{\mu}} H_n(\mu),
\nonumber
\end{equation}
where
\begin{equation}
H_n(\mu) := \sum_{j=2}^{n} \log \frac{1}{1 + (\Delta_jY - \Delta_jX \cdot \mu)^2}
\end{equation}
denotes the Cauchy quasi-likelihood (CQLF); the parameter space $\Theta_{\mu}$ of $\mu$ is now assumed to be a bounded convex domain.
Based on the random function $H_n(\mu)$, we will estimate the parameter $\mu$ as if the noise part ``$\sig \D_j \ep$'' in \eqref{hm:model_d} obeys the standard Cauchy distribution, although it may not be true. 
\rev{
It will turn out that, under appropriate conditions, the CQMLE $\hat{\mu}_n$ is asymptotically normal and satisfies the tail-probability estimate; the result can be regarded as a heavy-tail counterpart of the well-known fact that the Gaussian least-squares estimator can work for a broad class of finite-variance noise distributions, and seems new in the literature on linear regression with asymmetric heavy-tailed noise; the tail-probability estimate is essential in our proof of the $\sqrt{n}$-consistency of the moment estimator associated with the lower-order moments (Section \ref{sec:stable}).
The use of the Cauchy likelihood for possibly non-Cauchy noise was previously mentioned by, for example, \cite[Section 3]{CalDav98} in the context of linear processes with symmetric stable noise. 
}

To proceed, we introduce some assumptions. Let $A^{\otimes 2}:=AA^\top$.
\tcr{Recall that the true parameter value is given by $\theta_0 = (\al_0, \be_0, \sig_0, \mu_0)$.}

\begin{assump}\label{assump_AN_TPE}
~
\begin{enumerate}
    \item There exists a positive number $C_X$ such that $\sup_{j} |\Delta_jX|<C_X.$\label{X_assump}
    \item There exists a positive definite matrix $C_0$ and a constant $\be_1\in(0,1/2)$ such that\begin{equation}
    \sup_{n \in \mbbn} \left| n^{\beta_1}\left(\frac{1}{n} \sum_{j=2}^{n} (\Delta_j X)^{\otimes 2} - C_0 \right)\right| < \infty.
    \end{equation}\label{X^2_assump}
    \item There exists a matrix $D_0$ such that
    \begin{equation}
        \lim_{n \rightarrow \infty} \frac{1}{n-2}\sum_{j=2}^{n-1}(\Delta_j X)(\Delta_{j+1} X)^T = D_0.
    \end{equation}
    \label{XX_assump}
    
    \item \label{assump_eigen}
    There exists a continuous function $Y(\mu)$ such that
    \begin{equation}
        \sup_n \sup_{\mu \in \Theta_{\mu}}\left|n^{1/2-\beta_2}\left(
        Y^\star_n(\mu) -Y(\mu) \right)\right|<\infty
    \end{equation}
    for some constant $\beta_2\in[0,1/2)$, where
\begin{align}
    Y^\star_n(\mu)&:=-\frac{1}{n} \sum_{j=2}^{n} \E_{\tz}\big[\log(1 + \{\sig_0 \D_2 \ep - \Delta_jX\cdot(\mu-\mu_0)\}^{2}) 
    \nn\\
    &{}\qquad - \log(1 + (\sig_0 \D_2 \ep)^2)\big],
\end{align}
and there exists a constant $\lam>0$ such that $\tcr{Y(\mu)=Y(\mu)-Y(\mu_0)} \leq -\lambda|\mu -\mu_0|^2$ for all $\mu\in\mbbr^q$. 

\item \label{assump_noise}
$\E_{\tz}\left[|\log(1+\ep_1^2)|^K\right]<\infty$ for every $K>0$, and the symmetric distribution of the difference $\ep_2-\ep_1$ admits a Lebesgue density $\mathfrak{f}$ for which $\mathfrak{f}(1)>0$,
\begin{equation}
\sup_{x\ge 1}\mathfrak{f}(x)\le \mathfrak{f}(1) \le \inf_{0\le x\le 1}\mathfrak{f}(x),
\end{equation}
and there exist positive constants $\mathfrak{c}_1,\mathfrak{c}_2$ and $\mathfrak{b}$ for which $0<\mathfrak{c}_1<\mathfrak{c}_2<1$ and
\begin{equation}
    \inf_{\mathfrak{c}_1 \le x\le \mathfrak{c}_2}\mathfrak{f}(x) \ge (1+\mathfrak{b}) \mathfrak{f}(1).
\end{equation}
\end{enumerate}
\end{assump}

\medskip

Several remarks on Assumption \ref{assump_AN_TPE} are in order.

\begin{rem}
\label{hm:rem_assump}
~
\begin{enumerate}
    \item Note that Assumption \ref{assump_AN_TPE} excludes the constant factor (intercept) in ``$X_j\cdot 
\mu$''. This is the price we have to pay to handle possibly arbitrarily heavy-tailed and skewed $\ep_j$ 
by the simple and explicit CQLF $H_n(\mu)$. See Remark \ref{hm:rem_location} for a 
possible remedy for this inconvenience.
    \item Assumption \ref{assump_AN_TPE}\eqref{X^2_assump} and \eqref{XX_assump} are inevitable to identify the asymptotic covariance matrix of $\mes$ in Theorem \ref{hm:thm_CQMLE}.
In practice, one computes the quantities $\frac{1}{n} \sum_{j=2}^{n} (\Delta_j X)^{\otimes 2}$ and $\frac{1}{n}\sum_{j=2}^{n}(\Delta_j X)(\Delta_{j+1} X)^T$, and formally checks that the former is positive definite. Although we are assuming that $X_1,X_2,\dots$ are non-random, it could be, for example, 
a sequence of i.i.d. random vectors $(X_1,Y_1),(X_2,Y_2),\dots$ with $(X_j)_{j=1}^\infty$ being independent of $(\ep_j)_{j=1}^\infty$. Then, under the assumption that 
$\E_{\tz}[|X_1|^K]<\infty$ for sufficiently large $K>0$, one can apply the central limit theorem for one-dependent sequences \cite[Section 11]{Fer96} to conclude Assumption \ref{assump_AN_TPE}\eqref{X^2_assump} (with $\beta_1=1/2$) and \eqref{XX_assump} for $C_0=\E_{\tz}[(X_2-X_1)^{\otimes 2}]$ and $D_0=\E_{\tz}[(X_2-X_1)(X_3-X_2)^{\top}]$.
\item Assumption \ref{assump_AN_TPE}\eqref{assump_eigen} is a strong form of the identifiability (model-separation) condition. When $\{(X_j,Y_j)\}_{j=1}^n$ is i.i.d. as described in the previous item, then we can take $Y^\star_n(\mu)\equiv Y(\mu)$ with
\begin{align}
    Y(\mu) &= -\E_{\tz}\big[\log(1 + \{\sig_0 \D_2 \ep - \Delta_2 X\cdot(\mu-\mu_0)\}^{2}) 
    \nn\\
    &{}\qquad - \log(1 + (\sig_0 \D_2 \ep)^2)\big].
\end{align}

\item \label{hm:rem_noise}
In particular, Assumption \ref{assump_AN_TPE}\eqref{assump_noise} holds under \eqref{hm:noise} since $\mathfrak{f}(x)=c^{-1}\phi_{\al,0}(c^{-1}x)$ with $c:=2^{1/\al}\sig$.
More generally, it suffices that:
\begin{enumerate}
    \item the distribution of $\ep_1$ has at most Paretian tails;
    \item $\mathfrak{f}$ is bounded continuous and strictly decreasing on $[0,\infty)$. 
\end{enumerate}
Then, the logarithmic moments of any order are finite. We emphasize that the noise distribution $\mcl(\ep_1)$ is not necessary to be heavy-tailed and can also apply to a wide variety of light-tailed distributions as well.
\end{enumerate}   
\end{rem}

\begin{lem}\label{consistency}
Suppose that \eqref{X_assump}, \eqref{assump_eigen}, and \eqref{assump_noise} in Assumption \ref{assump_AN_TPE} hold. Then, we have the consistency
\begin{equation}
\hat{\mu}_n \cip \mu_0. \label{consistency-1}
\end{equation}
\end{lem}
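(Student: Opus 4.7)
The plan is to apply the standard M-estimator consistency argument: establish uniform convergence in probability of $\mu\mapsto\frac{1}{n}\bigl(H_n(\mu)-H_n(\mu_0)\bigr)$ on $\overline{\Theta_\mu}$ to a continuous deterministic function uniquely maximized at $\mu_0$, and then invoke the argmax theorem.

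To set this up, I would substitute $\D_jY=\D_jX\cdot\mu_0+\sig_0\D_j\ep$ into $H_n$ and split
\begin{equation}
\frac{1}{n}\bigl(H_n(\mu)-H_n(\mu_0)\bigr)=Y^\star_n(\mu)-\frac{1}{n}\sum_{j=2}^n\tilde f_j(\mu),
\nonumber
\end{equation}
where $f_j(\mu):=\log\bigl(1+(\sig_0\D_j\ep-\D_jX\cdot(\mu-\mu_0))^2\bigr)-\log\bigl(1+(\sig_0\D_j\ep)^2\bigr)$ and $\tilde f_j(\mu):=f_j(\mu)-\E_{\tz}[f_j(\mu)]$. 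Assumption \ref{assump_AN_TPE}\eqref{assump_eigen} gives $\sup_{\mu}|Y^\star_n(\mu)-Y(\mu)|\to 0$, with $Y$ continuous and strictly maximized at $\mu_0$ (since $Y(\mu_0)=0$ and $Y(\mu)\le -\lambda|\mu-\mu_0|^2$), so it suffices to prove
\begin{equation}
\sup_{\mu\in\overline{\Theta_\mu}}\Bigl|\frac{1}{n}\sum_{j=2}^n\tilde f_j(\mu)\Bigr|\cip 0.
\nonumber
\end{equation}

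The decisive observation is that $x\mapsto\log(1+x^2)$ is $1$-Lipschitz, so Assumption \ref{assump_AN_TPE}\eqref{X_assump} yields the deterministic bounds
\begin{equation}
|f_j(\mu)|\le |\D_jX\cdot(\mu-\mu_0)|\le C_X\,\mathrm{diam}(\Theta_\mu),\qquad |\nabla_\mu f_j(\mu)|\le C_X,
\nonumber
\end{equation}
uniformly in $j,\mu,\omega$. In particular, for each fixed $\mu$ the sequence $(\tilde f_j(\mu))_j$ is bounded, centered, and $1$-dependent (inherited from the $1$-dependence of $(\D_j\ep)_j$), so the $L^2$ weak law for $1$-dependent sequences gives $\frac{1}{n}\sum_{j=2}^n\tilde f_j(\mu)\cip 0$ pointwise; the uniform version follows by a finite $\varepsilon$-net argument on the bounded set $\overline{\Theta_\mu}$ combined with the uniform Lipschitz constant $C_X$.

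Combining the two pieces gives $\frac{1}{n}(H_n(\mu)-H_n(\mu_0))\cip Y(\mu)$ uniformly in $\mu\in\overline{\Theta_\mu}$, and the well-separated maximizer $\mu_0$ together with the standard argmax theorem (cf.\ \cite{Fer96}) yields $\hat{\mu}_n\cip\mu_0$. The main conceptual obstacle would have been uniform stochastic control under possibly heavy-tailed, $1$-dependent noise, but the contractive nature of $\log(1+\cdot^2)$ makes $f_j$ bounded by a deterministic constant depending only on $C_X$ and the diameter of $\Theta_\mu$, so that no delicate tail or moment estimate on $\ep_j$ is needed at this stage; Assumption \ref{assump_AN_TPE}\eqref{assump_noise} will become relevant only for the asymptotic normality and tail-probability results.
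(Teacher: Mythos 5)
Your argument is correct, and it shares the paper's overall skeleton: the same decomposition of $n^{-1}(H_n(\mu)-H_n(\mu_0))$ into $Y^\star_n(\mu)$ plus a centered $1$-dependent fluctuation, the same use of Assumption \ref{assump_AN_TPE}\eqref{assump_eigen} for a well-separated maximum, and the same appeal to the argmax theorem. Where you genuinely diverge is in how the uniform law of large numbers for the fluctuation term is obtained. You exploit the fact that $x\mapsto\log(1+x^2)$ is $1$-Lipschitz (its derivative $2x/(1+x^2)$ is bounded by $1$), so that $|f_j(\mu)|\le|\Delta_jX\cdot(\mu-\mu_0)|\le C_X\,\mathrm{diam}(\Theta_\mu)$ deterministically; boundedness plus $1$-dependence then gives the pointwise $L^2$ weak law, and the uniform Lipschitz constant in $\mu$ upgrades it to uniformity via a finite net. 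This is more elementary than the paper's route, which keeps the two logarithms separate, bounds $\sup_\mu|M_n(\mu)|$ via Sobolev's inequality on $\Theta_\mu$, and then controls $K_1$-th moments of the $1$-dependent sums by Burkholder and Jensen — a step that does invoke the logarithmic moment condition $\E_{\tz}[|\log(1+\ep_1^2)|^{K}]<\infty$ from Assumption \ref{assump_AN_TPE}\eqref{assump_noise}. Your observation that this condition is not actually needed for consistency is accurate. What your lighter argument does not buy, however, is the quantitative bound
\begin{equation}
\sup_{n} \E_{\tz}\Bigl[\sup_{\mu \in \Theta_{\mu}}|M_n(\mu)|^{K_1}\Bigr]<\infty ,
\end{equation}
which the paper deliberately proves at this stage (noting it is ``more than necessary for consistency'') because it is recycled as the fourth term of \eqref{tail_prob_condi} in the proof of the tail-probability estimate of Theorem \ref{hm:thm_CQMLE}. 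So as a proof of Lemma \ref{consistency} alone your version is complete and cleaner; in the economy of the whole paper, the heavier moment-bound machinery would still have to be carried out somewhere.
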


\begin{proof}
Let $Y_n(\mu):= n^{-1} (H_n(\mu)-H_n(\mu_0))$; we set $n^{-1}$ just for notational brevity, though it is natural to set $(n-1)^{-1}$ instead.
Let $\del_{j}(\mu):=\Delta_jY - \Delta_jX \cdot \mu$; then, $\del_{j}(\mu_0)=\sig_0 \D_j \ep$.
We have
\begin{align}
Y_n(\mu) 
&= -\frac{1}{n}\sum_{j=2}^{n} \{ \log(1+ \{ \delta_j(\mu_0) -\Delta_jX \cdot (\mu-\mu_0)\}^2) - \log(1+ \delta_j(\mu_0)^2)\}\\
&=: -\frac{1}{n} \sum_{j=2}^{n} \xi_j(\mu)\\
&= -\frac{1}{\sqrt{n}} \left( \frac{1}{\sqrt{n}}\sum_{j=2}^{n} \{ \xi_j(\mu) - \E_{\tz}[\xi_j(\mu)]\}\right) 
+ \left(- \frac{1}{n} \sum_{j=2}^{n} E_{\tz}[\xi_j(\mu)]\right) \\
&=: - \frac{1}{\sqrt{n}} M_n(\mu) + Y^\star_n(\mu).
\label{Y_n}
\end{align}
Hence, under Assumption \ref{assump_AN_TPE}\eqref{assump_eigen},
\begin{equation}
\sup_{\mu \in \Theta_{\mu}} \left|n^{1/2-\beta_2}\left(Y_n(\mu) - Y(\mu)\right)\right|
\le C\left(n^{-\beta_2}\sup_{\mu \in \Theta_{\mu}}|M_n(\mu)| + 1\right)\quad\text{a.s.}
\end{equation}
for a universal constant $C>0$.
To conclude the consistency, it suffices to show 
\tcr{the following two conditions (see, for example, \cite[Theorem 5.7]{vdV98} for the general result for proving the consistency):
\begin{itemize}
    \item The moment estimate
\begin{equation}\label{M_n-2}
    \sup_{n} \E_{\tz}\left[\sup_{\mu \in \Theta_{\mu}}|M_n(\mu)|^{K_1}\right]<\infty
\end{equation}
for $K_1>0$;
    \item The identifiability $\sup_{\mu : d(\mu, \mu_0) \geq \varepsilon} Y(\mu) < Y(\mu_0)=0$ for each $\ep>0$.
\end{itemize}
The former implies that the uniform convergence $\sup_{\mu \in \Theta_{\mu}}|Y_n(\mu) - Y(\mu)| \cip 0$ and that the latter follows from Assumption \ref{assump_AN_TPE}\eqref{assump_eigen}.} 
The estimate \eqref{M_n-2} is more than necessary for consistency, but it will be used later in Theorem \ref{hm:thm_CQMLE}.

\tcr{It remains to show \eqref{M_n-2}.}
Since $\Theta_{\mu}$ is a bounded convex domain, 
\tcr{we can apply Sobolev's inequality \cite[Remark in p.415]{Ada73} to conclude that for $K_1>q$,
\begin{equation}
\sup_{\mu \in \Theta_{\mu}} |M_n(\mu)|^{K_1}
\lesssim 
\int_{\Theta_{\mu}} |M_n(\mu)|^{K_1} d\mu 
+ 
\int_{\Theta_{\mu}} |\p_\mu M_n(\mu)|^{K_1} d\mu\qquad \text{a.s.},
\end{equation}
}
where the symbol $a_n\lesssim b_n$ for two possibly random non-negative sequences $(a_n)$ and $(b_n)$ means that there exists a universal constant $C>0$ for which $a_n\le C b_n$ a.s. 
\tcr{Thus, taking the expectation and applying Fubini's theorem,}
\begin{align}
\E_{\tz}\left[\sup_{\mu \in \Theta_{\mu}}|M_n(\mu)|^{K_1}\right]
& \lesssim \sup_{\mu \in \Theta_{\mu}} \E_{\tz}[|M_n(\mu)|^{K_1}] + \sup_{\mu \in 
\Theta_{\mu}} \E_{\tz}[|\p_{\mu} M_n(\mu)|^{K_1}].
\label{M_n}
\end{align}
\tcr{We will check the finiteness of the two expectations on the right-hand side for $K_1>q\vee 2$. 
Without loss of generality, we may and do suppose that $n$ is even, say $n=2m$. Recall that $M_n(\mu)$ involves the sum of $1$-dependent centered random variables $\tilde{\xi}_j(\mu):=\xi_j(\mu)- \E_{\tz}[\xi_j(\mu)]$, $1\le j\le n$. We observe that
\begin{equation}
    |M_n(\mu)|^{K_1} 
    \lesssim \left|\frac{1}{\sqrt{m}} \sum_{k=1}^{m} \tilde{\xi}_{2l}(\mu)\right|^{K_1} + \left|\frac{1}{\sqrt{m}} \sum_{l=1}^{m-1} \tilde{\xi}_{2l+1}(\mu)\right|^{K_1},
\end{equation}
the two sums on the right-hand side consisting of independent centered random variables. For the first sum, Burkholder's and Jensen's inequalities give
\begin{align}
& \sup_{\mu \in \Theta_{\mu}}\E_{\tz}\left[ \left|\frac{1}{\sqrt{m}} \sum_{k=1}^{m} \tilde{\xi}_{2l}(\mu)\right|^{K_1}\right]
\nn\\
&\qquad \lesssim \sup_{\mu \in \Theta_{\mu}} \E_{\tz}\left[ \left( \frac{1}{m}\sum_{l=1}^{m} \tilde{\xi}_{2l}(\mu)^2\right)^{K_1/2}\right]\\
&\qquad \lesssim \sup_{\mu \in \Theta_{\mu}} \frac{1}{m} \sum_{l=1}^{m} \E_{\tz}[|\xi_{2l}(\mu)|^{K_1}]\\
&\qquad \lesssim  \sup_{\mu \in \Theta_{\mu}} \frac{1}{m} \sum_{l=1}^{m} \E_{\tz}[| \log(1+ \del_{2l}(\mu)^2) |^{K_1}]\\
&\qquad \lesssim \sup_{\mu \in \Theta_{\mu}} \frac{1}{m} \sum_{l=1}^{m} \E_{\tz}[| \log(1+ (\del_2(\mz) + \Delta_{2l}X\cdot(\mz - \mu))^2) |^{K_1}]\\
&\qquad \lesssim 1+ \E_{\tz}[| \log(1+ (|\ep_{1}|+ |\ep_2| + 1)^2) |^{K_1}]\\
&\qquad \lesssim 1 + \E_{\tz}\left[|\log(1+\ep_1^2)|^{K_1}\right] \lesssim 1.
\end{align}
The second sum can be handled in the same way:
\begin{equation}
    \sup_{\mu \in \Theta_{\mu}}\E_{\tz}\left[ \left|\frac{1}{\sqrt{m}} \sum_{l=1}^{m-1} \tilde{\xi}_{2l+1}(\mu)\right|^{K_1}\right] \lesssim 1.
\end{equation}
Thus, we have shown that the first term of \eqref{M_n} is bounded in $n$. The second term can be handled similarly.
}
We conclude \eqref{M_n-2}, hence the proof is complete.
\end{proof}

\tcr{
Let $\del_{j}(\mu_0)=\sig_0 \D_j \ep=:S_j$, and let
\begin{align}
\Sigma_0 &:= 
4\E_{\tz}\left[\left( \frac{S_2}{1 + S_2^2} \right)^2\right]
C_0 + 4\E_{\tz}\left[\left(\frac{S_2}{1+ S_2^2}\right)\left(\frac{S_3}{1+ S_3^2}\right)\right](D_0+D_0^{T}),
\nn\\
\Gam_0 &:= 2 \E_{\tz}\left[\frac{1 - S_2^2}{(1+S_2^2)^2}\right] C_0,
\nonumber
\end{align}
both of which exist and are finite since the random variables inside $\E_{\tz}$ are essentially bounded.
}

\begin{thm}\label{hm:thm_CQMLE}
Suppose that Assumption \ref{assump_AN_TPE} holds. Then, we have
\begin{equation}
\label{hm:mu-AN}
\sqrt{n} (\hat{\mu}_n - \mu_0) \cil N(0, \Gamma_0^{-1}\Sigma_0 \Gamma_0^{-1}).
\end{equation}
Furthermore, for any $L>0$ the following tail-probability estimate holds
\begin{equation}\label{hm:mu-TPE}
\sup_{r>0}\sup_{n \in \mbbn} \pr_{\tz}\left[|\sqrt{n}(\hat{\mu}_n - \mz)|>r\right]\, r^{L}< \infty.
\end{equation}
\end{thm}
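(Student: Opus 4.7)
The plan is to prove \eqref{hm:mu-AN} by a standard Taylor expansion at the score equation $\partial_\mu H_n(\hat{\mu}_n) = 0$, and to derive \eqref{hm:mu-TPE} via a Yoshida-type polynomial large-deviation inequality whose main hypotheses are the moment bound \eqref{M_n-2} and the quadratic identifiability in Assumption \ref{assump_AN_TPE}(4).

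For \eqref{hm:mu-AN}, with $\delta_j(\mu) := \Delta_j Y - \Delta_j X \cdot \mu$, direct differentiation gives
\begin{equation}
\partial_\mu H_n(\mu) = 2 \sum_{j=2}^n \frac{\delta_j(\mu)}{1+\delta_j(\mu)^2}\, \Delta_j X, \qquad
\partial_\mu^2 H_n(\mu) = -2 \sum_{j=2}^n \frac{1-\delta_j(\mu)^2}{(1+\delta_j(\mu)^2)^2}(\Delta_j X)^{\otimes 2}.
\end{equation}
By Lemma \ref{consistency}, eventually $\hat{\mu}_n$ is an interior critical point of $H_n$, and Taylor expansion about $\mu_0$ yields $\sqrt{n}(\hat{\mu}_n - \mu_0) = -[n^{-1}\partial_\mu^2 H_n(\tilde{\mu}_n)]^{-1}\, n^{-1/2}\partial_\mu H_n(\mu_0)$ for some intermediate $\tilde{\mu}_n$. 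With $g(s) := s/(1+s^2)$ and $S_j := \sig_0 \Delta_j \ep$, the score summands $g(S_j) \Delta_j X$ are bounded, centered by symmetry of $\Delta_j \ep$ (from Assumption \ref{assump_AN_TPE}(5)), and $1$-dependent; the CLT for $1$-dependent triangular arrays, together with Assumption \ref{assump_AN_TPE}(2)--(3), gives $n^{-1/2}\partial_\mu H_n(\mu_0) \cil N(0, \Sigma_0)$. The Hessian summands are bounded, Lipschitz in $\mu$, and $1$-dependent, so a ULLN plus the consistency of $\tilde{\mu}_n$ yields $-n^{-1}\partial_\mu^2 H_n(\tilde{\mu}_n) \cip \Gamma_0$, from which \eqref{hm:mu-AN} follows by Slutsky's lemma.

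For \eqref{hm:mu-TPE}, set $\hat{u}_n := \sqrt{n}(\hat{\mu}_n - \mu_0)$. A third-order Taylor expansion of $u \mapsto H_n(\mu_0 + u/\sqrt n) - H_n(\mu_0)$ around $u = 0$ gives
\begin{equation}
H_n\bigl(\mu_0 + u/\sqrt n\bigr) - H_n(\mu_0) = \Delta_n \cdot u + \frac{1}{2n}\, u^T \partial_\mu^2 H_n(\mu_0)\, u + R_n(u),
\end{equation}
where $\Delta_n := n^{-1/2}\partial_\mu H_n(\mu_0)$ has bounded $\E_{\tz}$-moments of every order (by the same Sobolev--Burkholder argument as in the proof of Lemma \ref{consistency}), and the deterministic bound $\sup_\mu |n^{-1}\partial_\mu^3 H_n(\mu)| \lesssim 1$ gives $|R_n(u)| \lesssim |u|^3 / \sqrt n$. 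On the high-probability event that $-n^{-1}\partial_\mu^2 H_n(\mu_0)$ is close to $\Gamma_0$ and $|\hat u_n|$ is at most a suitable small multiple of $\sqrt n$, the cubic remainder is dominated by the quadratic coercive term, so the inequality $H_n(\mu_0 + \hat u_n/\sqrt n) \ge H_n(\mu_0)$ forces $|\hat u_n| \lesssim |\Delta_n|$, and Markov's inequality yields $\pr_{\tz}[|\hat u_n| \ge r] \lesssim r^{-K}$ uniformly in $n$. The complementary regime, where $|\hat u_n|$ exceeds the above multiple of $\sqrt n$, is handled by the global bound
\begin{equation}
H_n(\mu) - H_n(\mu_0) \le \sqrt n \sup_{\mu \in \Theta_\mu} |M_n(\mu)| - \lambda n |\mu - \mu_0|^2 + C n^{1/2+\beta_2}
\end{equation}
from the decomposition in the proof of Lemma \ref{consistency} and Assumption \ref{assump_AN_TPE}(4), together with \eqref{M_n-2} of arbitrarily large order. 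The main technical subtlety is showing that the auxiliary bad events (Hessian at $\mu_0$ far from $-\Gamma_0$, or $\hat u_n$ outside the Taylor-expansion regime) have tail probabilities of arbitrary polynomial order; this follows by applying the same Sobolev--Burkholder argument as in the proof of Lemma \ref{consistency} to $\sup_\mu |n^{-1}\partial_\mu^2 H_n(\mu) + \Gamma(\mu)|$ and related uniform remainder processes.
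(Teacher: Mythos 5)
Your proof of \eqref{hm:mu-AN} follows the same route as the paper: Taylor expansion at the score equation, the CLT for $1$-dependent sequences applied to the bounded, symmetric-hence-centered score summands $g(S_j)\Delta_j X$, and convergence of the Hessian. One ingredient you pass over silently is the positive definiteness of $\Gamma_0$, which is needed both to invert the Hessian limit here and to make your ``quadratic coercive term'' actually coercive in the tail-probability argument; it is not automatic, since $\E_{\tz}[(1-S_2^2)/(1+S_2^2)^2]$ could a priori vanish or be negative. The paper proves it is strictly positive in Lemma \ref{lem:integral}, using precisely the density conditions in Assumption \ref{assump_AN_TPE}\eqref{assump_noise}; you should cite or reproduce that step.

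For \eqref{hm:mu-TPE} your route is genuinely different in presentation, though not in substance. The paper invokes Yoshida's polynomial-type large deviation inequality \cite[Theorem 3(c)]{Yos11} as a black box and reduces the work to verifying the four moment bounds collected in \eqref{tail_prob_condi} (all-order moments of $n^{-1}\sup_\mu|\p_\mu^3 H_n|$, of $n^{\beta_1}|\Gamma_n(\mu_0)-\Gamma_0|$, of $n^{-1/2}\p_\mu H_n(\mu_0)$, and of $\sup_\mu n^{1/2-\beta_2}|Y_n(\mu)-Y(\mu)|$), each handled by splitting the $1$-dependent sums into two independent blocks and applying Burkholder's inequality, plus the Sobolev embedding for the uniform-in-$\mu$ terms. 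Your argument re-derives the relevant special case of that inequality by hand: a third-order expansion with the Hessian frozen at $\mu_0$, a local regime where $H_n(\mu_0+\hat u_n/\sqrt n)\ge H_n(\mu_0)$ forces $|\hat u_n|\lesssim|\Delta_n|$ and Markov's inequality applies, and a global regime $|\hat\mu_n-\mu_0|>c$ controlled by the decomposition $nY_n(\mu)=-\sqrt n\,M_n(\mu)+nY_n^\star(\mu)$ together with the quadratic identifiability bound and \eqref{M_n-2} at arbitrarily high order. This closes correctly: the bad events (Hessian far from $-\Gamma_0$, or $|\hat u_n|>c\sqrt n$) have probability $O(n^{-K})$ for every $K$ by the same Burkholder/Sobolev estimates, and for $r\gtrsim\sqrt n$ the probability is eventually zero since $\Theta_\mu$ is bounded. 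The trade-off is clear: your version is self-contained and more transparent about where each hypothesis enters, at the cost of reproving a known result; the paper's version is shorter but requires the reader to match its conditions to the notation of \cite{Yos11}. Either way the technical burden is identical, since the quantities you must bound are exactly those in \eqref{tail_prob_condi}.
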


\begin{proof}
First, we prove \eqref{hm:mu-AN}.
From the mean value theorem,
\begin{equation}
\p_{\mu}H_n(\hat{\mu}_n) = \p_{\mu}H_n(\mz) + \int_{0}^{1} \p_{\mu}^2 H_n(\mz + t(\hat{\mu}_n-\mz))dt (\hat{\mu}_n - \mz).
\label{hm:add1}
\end{equation}
By the consistency of $\mes$, we may and do suppose that $\p_{\mu}H_n(\hat{\mu}_n)=0$, so that
\begin{equation}
\sqrt{n}(\hat{\mu}_n - \mz) = \left( -\frac{1}{n} \int_{0}^{1} \p_{\mu}^2 H_n(\mz + t(\hat{\mu}_n-\mz))dt\right)^{-1} \frac{1}{\sqrt{n}} \p_{\mu}H_n(\mz).
\end{equation}
We can conclude \eqref{hm:mu-AN} by showing
\begin{align}
\frac{1}{\sqrt{n}}\p_{\mu}H_n(\mz) \cil N(0, \Sigma_0),
\label{hm:add2}\\
-\int_{0}^{1}  \frac{1}{n} \p_{\mu}^2 H_n(\mz + t(\hat{\mu}_n-\mz)) dt
&\cip \Gamma_0,
\label{hm:add3}
\end{align}
with $\Gamma_0$ being positive definite.

\medskip

\noindent
\textit{Proof of \eqref{hm:add2}.}
Observe that
\begin{align}
\frac{1}{\sqrt{n}}\p_{\mu} H_n(\mz)
&= -\frac{1}{\sqrt{n}} \sum_{j=2}^{n} \p_{\mu} \log(1+ (\Delta_jY-\Delta_jX \cdot \mu)^2)|_{\mu=\mz}\\
&= \sum_{j=2}^{n} \frac{2}{\sqrt{n}} \frac{S_j}{1+ S_j^2} 
\Delta_jX =:\sum_{j=2}^{n} \chi_{n,j}.
\end{align}
Since the distribution of $S_j = \sig_0\Delta_j \ep$ is symmetric,
\begin{align}
\E_{\tz}[\chi_{n,j}]
&= \frac{2}{\sqrt{n}} \E_{\tz} \left[\frac{S_2}{1+ S_2^2} \right]\Delta_jX =0.
\end{align}
We also have
\begin{align}
& \lim_{n \rightarrow \infty} \E_{\tz}\left[\left(\sum_{j=2}^{n}\chi_{n,j}\right)^{\otimes 2}\right]
\nn\\
&= \lim_{n \rightarrow \infty} \frac{4}{n} \E_{\tz}\left[\sum_{i=2}^{n}\sum_{j=2}^{n} \left(\frac{S_i}{1+ S_i^2}\right) \left(\frac{S_j}{1+ S_j^2}\right) (\Delta_iX)(\Delta_jX)^{T}\right]\\
&=\lim_{n \rightarrow \infty} \frac{4}{n} \sum_{j=2}^{n} \E_{\tz}\left[\left(\frac{S_2}{1+ S_2^2}\right)^2 \right] (\Delta_jX)^{\otimes2}\\
&{}\qquad + 4\E_{\tz}\left[\left(\frac{S_2}{1+ S_2^2}\right)\left(\frac{S_3}{1+ S_3^2}\right)\right]\\
&{}\qquad  \times \bigg \{\lim_{n \rightarrow \infty} \frac{1}{n} \sum_{j=2}^{n-1} \{ (\Delta_jX)(\Delta_{j+1}X)^{T} + (\Delta_{j+1}X)(\Delta_{j}X)^{T} \} \bigg\}\\
&=\Sigma_0.
\end{align}
From the above discussion and \tcr{the central limit theorem for sequences of $1$-dependent random variables \cite[Theorem 11]{Fer96}}, we get \eqref{hm:add2}.

\medskip

\noindent
\textit{Proof of \eqref{hm:add3}.}
By Taylor's theorem,
\begin{align}
& -\frac{1}{n} \int_{0}^{1} \p_{\mu}^2H_n(\mz + t(\hat{\mu}_n-\mz))dt \nn\\
&= -\frac{1}{n} \p_{\mu}^2H_n(\mz) - \frac{1}{n} \int_{0}^{1} \int_{0}^{1} t \p_{\mu}^3H_n(\mz + ut(\hat{\mu}_n-\mz)) dudt (\hat{\mu}_n - \mz),
\end{align}
By the consistency of $\hat{\mu}_n$ proved in Lemma \ref{consistency} and since $n^{-1}\sup_{\mu \in \Theta_{\mu}} \left|\p_{\mu}^3H_n(\mu)\right| \lesssim 1$ a.s., we have
\begin{align}
\left| \frac{1}{n} \int_{0}^{1} \int_{0}^{1} t\p_{\mu}^3 H_n(\mz + ut(\hat{\mu}_n - \mz)) dudt (\hat{\mu}_n - \mz) \right|
&\leq \frac{1}{n} \sup_{\mu \in \Theta_{\mu}} \left|\p_{\mu}^3 H_n(\mu)\right| |\hat{\mu}_n - \mz|\\
&= O_p(1) o_p(1) = o_p(1).
\end{align}
It follows that
\begin{align}
-\frac{1}{n} \int_{0}^{1} \p_{\mu}^2 H_n(\mz + t(\hat{\mu}_n-\mz))dt
&= -\frac{1}{n} \p_{\mu}^2H_n(\mz) + o_p(1).
\end{align}
We have
\begin{align}
-\frac{1}{n} \p_{\mu}^2H_n\left(\mz \right)
&= \frac{1}{n} \p_{\mu}^2 \sum_{j=2}^{n} \log(1+(\Delta_jY - \Delta_jX \cdot \mu)^2)\bigg|_{\mu=\mz}\\
&= \frac{1}{n} \sum_{j=2}^{n} \frac{2 - 2S_j^2}{(1+S_j^2)^2}(\Delta_jX)^{\otimes 2}\\
&= \frac{1}{n} \sum_{j=2}^{n} \left( \frac{2 - 2S_j^2}{(1+S_j^2)^2}- \E_{\tz}\left[\frac{2 - 2S_2^2}{(1+S_2^2)^2}\right]\right)(\Delta_jX)^{\otimes 2}
\nn\\
&{}\qquad + \E_{\tz}\left[\frac{2 - 2S_2^2}{(1+S_2^2)^2}\right] \frac{1}{n} \sum_{j=2}^{n} (\Delta_jX)^{\otimes 2}.
\end{align}
Using the Lindeberg-Lyapunov theorem as before, the first term on the rightmost side is $O_p(n^{-1/2})$.
Thus, we obtain
\begin{equation}
-\frac{1}{n} \p_{\mu}^2 H_n\left(\mz \right) \cip \E_{\tz}\left[\frac{2 - 2S_2^2}{(1+S_2^2)^2}\right]C_0 = \Gam_0,
\end{equation}
followed by \eqref{hm:add3}.
Finally, $\Gamma_0 >0$ holds since the expectation is positive by Lemma \ref{lem:integral} below.

\medskip

Next, we turn to the proof of \eqref{hm:mu-TPE}, the tail-probability estimate of $\hat{\mu}_n$. 
Recall the notation $Y_n(\mu)= \frac{1}{n}(H_n(\mu)- H_n(\mu_0))$ 
and let $\Gamma_n(\mu)= -\frac{1}{n} \p_{\mu}^2 H_n(\mu)$.
\tcr{By Assumption \ref{assump_AN_TPE}, the conditions $[A4'],[B1]$, and $[B2]$ in \cite[Theorem 3(c)]{Yos11} are satisfied. Therefore, it remains to verify conditions $[A1'']$ and $[A6]$. To this end, it suffices to establish the following for any $K>q \vee 2$:}
  \begin{align}
      & \sup_{n \in \mbbn} \E_{\tz}\left[\left(\frac{1}{n}\sup_{\mu \in \Theta_{\mu}}|\p_{\mu}^{3}H_n(\mu)|\right)^{K}\right]
      +\sup_{n \in \mbbn} \E_{\tz}\left[\left(n^{\beta_1}|\Gamma_n(\mu_0)-\Gamma_0|\right)^{K}\right]
      \nn\\
      &{}\qquad +\sup_{n \in \mbbn} \E_{\tz}\left[\left|\frac{1}{\sqrt{n}}\p_{\mu} H_n(\mz)\right|^{K}\right]
      \nn\\
      &{}\qquad 
       +\rev{\sup_{n \in \mbbn} \E_{\tz}\left[\left(\sup_{\mu \in \Theta_{\mu}}n^{\frac{1}{2}-\beta_2}|Y_n(\mu)-Y(\mu)|\right)^{K}\right]} 
      < \infty.
      \label{tail_prob_condi}
  \end{align}
 where $\beta_1\in(0,1/2)$ is the one given in Assumption \ref{assump_AN_TPE}\eqref{X^2_assump} and $\be_2 \in (0,1/2)$ is the one given in Assumption \ref{assump_AN_TPE}\eqref{assump_eigen}.

We begin with the first term. 
\rev{Let $f(x):= \log(1+x^2)$.}
Then, $\sup_{x \in \mbbr} |\p_{x}^kf(x)| \lesssim 1$ ($k\ge 1$), so that following inequalities hold:
\begin{align}
\sup_{n \in \mbbn} \E_{\tz}\left[\left(\frac{1}{n}\sup_{\mu \in \Theta_{\mu}}|\p_{\mu}^{3}H_n(\mu)|\right)^{K}\right]
&\leq \sup_{n \in \mbbn} \E_{\tz}\left[\left(\frac{1}{n}\sum_{j=2}^{n} \sup_{\mu \in \Theta_{\mu}} |\p_{\mu}^3 f(\del_j(\mu))|\right)^{K}\right]
\lesssim 1.
\nonumber
\end{align}
Turning to the second term.
We observe that
\begin{flalign}
\Gamma_n(\mu_0)
&= -\frac{1}{n} \p_{\mu}^2H_n(\mu_0)\\
&= \frac{1}{n}\sum_{j=2}^{n}\p_{x}^2f(S_j) (\Delta_jX)^{\otimes2}\\
&= \E_{\tz}[\p_{x}^2f(S_2)]\frac{1}{n}\sum_{j=2}^{n} (\Delta_jX)^{\otimes2}
+ \frac{1}{n}\sum_{j=2}^{n} \{\p_{x}^2f(S_j) - \E_{\tz}[\p_{x}^2f(S_2)]\} (\Delta_jX)^{\otimes2}\\
&= \Gamma_0 +  \E_{\tz}[\p_{x}f^2(S_2)]\left(\frac{1}{n}\sum_{j=2}^{n} (\Delta_jX)^{\otimes2}-C_0 \right)  \\
&{}\qquad+\frac{1}{n}\sum_{j=2}^{n} \{\p_{x}^2f(S_j) - \E_{\tz}[\p_{x}^2 f(S_2)] \} (\Delta_jX)^{\otimes2},
\label{Gamma_n}
\end{flalign}
where we used $\Gamma_0 = \E_{\tz}[\p_{x}^2f(S_2)] C_0$.
By rewriting the above equations,
\begin{align}
n^{\be_1}(\Gamma_n(\mu_0)- \Gamma_0)
&= \E_{\tz}[\p_{x}^2f(S_2)] \left(\frac{1}{n}\sum_{j=2}^{n} (\Delta_jX)^{\otimes2}-C_0 \right)n^{\be_1} 
\\
&{}\qquad+ \frac{1}{n}\sum_{j=2}^{n} \{\p_{x}^2f(S_j) - \E_{\tz}[\p_{x}^2f(S_2)]\} (\Delta_jX)^{\otimes2}n^{\be_1}.
\end{align}
Since $\p_{x}^2f(x)$ is bounded,
for $K \geq 2$, Burkholder's inequality yields
\begin{align}
& \E_{\tz}\left[|n^{\be_1}(\Gamma_n(\mu_0)- \Gamma_0)|^{K}\right]
\nn\\
&\lesssim \left| 
\left(\frac{1}{n}\sum_{j=2}^{n}
(\Delta_jX)^{\otimes2}-C_0 \right)n^{\be_1} \right|^{K}\\
&{}\quad+ n^{K(\be_1-1/2)} 
\E_{\tz} \left[ \left|\frac{1}{\sqrt{n}}\sum_{j=2}^{n}
\{\p_{x}^2f(S_j) - \E_{\tz}[\p_{x}^2f(S_2)]\} (\Delta_jX)^{\otimes2}
\right|^{K}\right]\\
&\lesssim 1+n^{K(\be_1-1/2)} \lesssim 1.
\end{align}


As for the third term of the inequality \eqref{tail_prob_condi}, it is obvious that
\begin{align}
\E_{\tz}[\p_{x}f(S_j) \Delta_jX]
&= \E_{\tz} \left[\frac{2S_2}{1+ S_2^2} \right]  \Delta_jX= 0.
\end{align}
Since $\p_{x}f(S_2)\cdot \Delta_2 X,\dots,\p_{x}f(S_n)\cdot \Delta_n X$ are $1$-dependent, the following inequality holds for $K \geq 2$ from Burkholder's inequality:
\begin{align}
\E_{\tz} \left[\left|\frac{1}{\sqrt{n}}\p_{\mu}H_n(\mz)\right|^{K} \right]
&\lesssim \E_{\tz}\left[\left|\frac{1}{\sqrt{n/2}}\sum_{j=1}^{[n/2]} \p_{x}f(S_{2j}) \Delta_{2j}X\right|^{K} \right]\\
&{}\quad+ \E_{\tz}\left[\left|\frac{1}{\sqrt{n/2}}\sum_{j=1}^{[(n-1)/2]} \p_{x}f(S_{2j+1})\Delta_{2j+1}X \right|^{K} \right]
\lesssim 1.
\end{align}
Finally, we already proved in the proof of Lemma \ref{consistency} that the fourth term of the inequality \eqref{tail_prob_condi} is finite.
The proof of \eqref{tail_prob_condi} is complete.
\end{proof}

\begin{lem}\label{lem:integral}
Under Assumption \ref{assump_AN_TPE}\eqref{assump_noise}, we have
\begin{equation}
    \E_{\tz} \left[ \frac{1-S_2^2}{(1+S_2^2)^2} \right] > 0.
\end{equation}
\end{lem}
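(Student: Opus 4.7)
The plan is to exploit the fact that the integrand is a total derivative. Setting $g(x):=(1-x^2)/(1+x^2)^2$, observe the antiderivative identity
\[
g(x) = \frac{d}{dx}\left(\frac{x}{1+x^2}\right),
\]
so that $\int_{0}^{\infty} g(x)\,dx = 0$, with $\int_{0}^{1}g = +\tfrac{1}{2}$ and $\int_{1}^{\infty}g = -\tfrac{1}{2}$. Note also that $g(x)>0$ on $(-1,1)$ and $g(x)\le 0$ on $\{|x|\ge 1\}$.

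Next, I would use the symmetry of $\mathfrak{f}$ (the density of $S_2$) to write
\[
\E_{\tz}\!\left[\frac{1-S_2^2}{(1+S_2^2)^2}\right] = 2\int_{0}^{\infty} g(x)\,\mathfrak{f}(x)\,dx,
\]
and then pivot around the value $\mathfrak{f}(1)$: since $\mathfrak{f}(1)\int_{0}^{\infty}g(x)\,dx = 0$, we get the centered identity
\[
\int_{0}^{\infty} g(x)\,\mathfrak{f}(x)\,dx
= \int_{0}^{1} g(x)\,\bigl[\mathfrak{f}(x)-\mathfrak{f}(1)\bigr]\,dx
+ \int_{1}^{\infty} g(x)\,\bigl[\mathfrak{f}(x)-\mathfrak{f}(1)\bigr]\,dx.
\]

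Now I would check nonnegativity of each piece by matching signs. On $(0,1)$ one has $g(x)>0$ together with $\mathfrak{f}(x)-\mathfrak{f}(1)\ge 0$ (by the hypothesis $\mathfrak{f}(1)\le\inf_{0\le x\le 1}\mathfrak{f}(x)$); on $(1,\infty)$ one has $g(x)\le 0$ together with $\mathfrak{f}(x)-\mathfrak{f}(1)\le 0$ (by $\sup_{x\ge 1}\mathfrak{f}(x)\le\mathfrak{f}(1)$). Hence both integrals are $\ge 0$.

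The strict inequality comes from the bump condition on $[\mathfrak{c}_1,\mathfrak{c}_2]\subset (0,1)$: there $\mathfrak{f}(x)-\mathfrak{f}(1)\ge \mathfrak{b}\,\mathfrak{f}(1)>0$ and $g(x)\ge g(\mathfrak{c}_2)>0$ since $g$ is decreasing on $[0,1]$, so the first integral is bounded below by
\[
\mathfrak{b}\,\mathfrak{f}(1)\,g(\mathfrak{c}_2)\,(\mathfrak{c}_2-\mathfrak{c}_1) > 0,
\]
which concludes the proof. There is no real obstacle; the key insight is simply to recognize $g$ as an exact derivative so that subtracting the harmless constant $\mathfrak{f}(1)$ aligns the signs of the two factors on each piece and converts the problem into a one-line estimate using the tailor-made bump condition.
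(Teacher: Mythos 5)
Your proof is correct and follows essentially the same route as the paper's: both reduce to $(0,\infty)$ by symmetry, compare $\mathfrak{f}$ against the constant $\mathfrak{f}(1)$ using the fact that $\int_0^\infty \frac{1-x^2}{(1+x^2)^2}\,dx=0$, and extract strict positivity from the bump condition on $[\mathfrak{c}_1,\mathfrak{c}_2]$. The only cosmetic difference is that you evaluate the integral via the antiderivative $x/(1+x^2)$ while the paper uses the substitution $x=\tan\alpha$.
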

\begin{proof}
The function $\psi(x):= \frac{1-x^2}{(1+x^2)^2}$ is symmetric, is positive (resp. negative) for $|x|\leq1$ (resp. $|x|>1$), and satisfies that $\int_{\mbbr}\psi(x)dx =0$ (apply the change of variable $x=\tan \alpha$). 
We have $\int_{1}^\infty \psi(x) \mathfrak{f}(x)dx \ge \mathfrak{f}(1)\int_1^\infty \psi(x)dx$ and also $\int_{0}^{1} \psi(x) \mathfrak{f}(x)dx > \mathfrak{f}(1) \int_{0}^{1} \psi(x) dx$; the latter inequality is strict since
\begin{equation}
    \int_{0}^{1} \psi(x) (\mathfrak{f}(x) - \mathfrak{f}(1))dx \ge \int_{\mathfrak{c}_1}^{\mathfrak{c}_2} \psi(x) (\mathfrak{f}(x) - \mathfrak{f}(1))dx \ge \mathfrak{b} \mathfrak{f}(1)\,\int_{\mathfrak{c}_1}^{\mathfrak{c}_2} \psi(x)dx>0.
\end{equation}
Thus,
\begin{align*}
\E_{\tz} \left[\psi(S_2) \right] &= 2\int_{0}^\infty \psi(x) \mathfrak{f}(x)dx
> 2\mathfrak{f}(1)\int_{0}^{1} \psi(x)dx + 2\mathfrak{f}(1)\int_{1}^{\infty}\psi(x)dx
\nn\\
&=2\mathfrak{f}(1) \int_{\mbbr}\psi(x)dx = 0.
\end{align*}
\end{proof}

\begin{rem}
\label{hm:rem_LADE}
Instead of the CQMLE, one may think of the least absolute deviation estimator (LADE) as was studied in \rev{\cite{Kni98} and \cite{Hos22_mt}}. 
\rev{
However, because of some technical issues caused by the non-differentiability of the quasi-likelihood associated with the LADE, we currently have not proved the tail-probability estimate for the LADE, that will be required to show the ($\sqrt{n}$-)consistency of the moment estimator considered in Section \ref{sec:stable}; therefore, we do not know if we can follow a similar route to the CQMLE case.}
For comparison purposes, we will also observe the finite-sample performance of the LADE in the numerical experiments in Section \ref{sec:numerical}.
\end{rem}

\rev{
\begin{rem}
One may think of the joint estimation of the regression coefficient and the noise scale parameter through the CQLF. However, as can be seen from \cite[Proposition 1]{FanQiXiu14}, the estimation of the noise-scale parameter through the CQLF will lead to an inconsistent estimator. This is the reason why we used the CQLF only for estimating the regression coefficient $\mu$.
\end{rem}
}


\section{Method of moments based on residuals}
\label{sec:stable}

Let us go back to the setup described in Section \ref{hm:sec_st.reg.setup}.
The next step is to estimate the parameters $(\al, \beta, \sig)$ characterizing the noise distribution $\mcl(\ep_1)$.
We introduce the residual sequence
\begin{equation}
    \hat{\ep}_{j} := Y_j - X_j \cdot \mes
\end{equation}
with $\mes$ denoting the CQMLE.
We will estimate the parameters by the method of moments, based on the residuals
\begin{align*}
    \hat{\ep}_{S,j}:= \hat{\ep}_{j}- \hat{\ep}_{j-1},\qquad
\hat{\ep}_{C,j}:= \hat{\ep}_{j}+\hat{\ep}_{j-1}-2\hat{\ep}_{j-2}
\end{align*}
for $\ep_{S,j}:=\ep_j-\ep_{j-1}$ $(j=2,\dots,n)$ and $\ep_{C,j}:=\ep_j+\ep_{j-1}- \ep_{j-2}$ $(j=3,\dots,n)$, respectively.
\rev{
By manipulating the characteristic function (see \eqref{hm:0stable.cf} and \eqref{hm:0stable.cf_al=1}), we see that
\begin{align}
 \ep_{S,j} &\sim S_{\al}^{0}(0, 2^{1/\al}\sig, 0), \nn\\
 \ep_{C,j} &\sim S_{\al}^{0} \left( \left( \frac{2-2^{\al}}{2+2^{\al}}\right)\be, (2+ 2^{\al})^{1/\al}\sig, \left( \frac{2-2^{\al}}{2+2^{\al}}\right)\be (2+ 2^{\al})^{1/\al}\sig \tan \frac{\al \pi}{2}\right).
 \nn
\end{align}
}
More specifically, we will prove the $\sqrt{n}$-consistency of the moment estimator.
Introduce the following notation:
\begin{align}
\tz^{-} &=  (\al_0, \beta_0, \sig_0), \quad \tz = (\al_0, \beta_0, \sig_0, \mu_0),\quad \rev{|x|^{\la r \ra}:= \sgn(x)|x|^{r}\quad(x\in\mbbr)},\\
%
%
m_n &:= 
\bigg(
\frac{1}{n-1} \sum_{j=2}^{n} |\hat{\ep}_ {S,j}|^{r},\,
\frac{1}{n-1} \sum_{j=2}^{n}|\hat{\ep}_ {S,j}|^{2r},\,
\nn\\
&{}\qquad 
\frac{1}{n-2} \left. \sum_{j=3}^{n}|\hat{\ep}_ {C,j}|^{\la r \ra} \middle/ {\frac{1}{n-2}\sum_{j=3}^{n}|\hat{\ep}_ {C, j}|^{r}} \right.
\bigg),
\nn\\
m(\tz^{-}) &:= \left(
\E_{\tz}[|\ep_{S,2}|^{r}],\,
\E_{\tz}[|\ep_{S,2}|^{2r}],\,
\left. \E_{\tz}[|\ep_{C,3}|^{\la r \ra}] \right/ \E_{\tz}[|\ep_{C,3}|^{r}]
\right),
\end{align}
where $r$ is a given constant and satisfies 
\begin{equation}\label{hm:r_region}
0 <r<\frac{\az}{4},
\end{equation}
which in particular implies that $r<1/2$; 
the quantities $m_n$ and $m(\tz^{-})$ were also considered in \cite[Section 3.3]{Hos22_mt} without taking care of the rate of convergence of the associated moment estimator.


\textit{Throughout this section, Assumption \ref{assump_AN_TPE} refers to the one in Section \ref{sec:CQMLE} with the additional requirement that $\ep_1,\ep_2,...\sim \text{i.i.d.}~S_\al(\beta,1,0)$; see also Remark \ref{hm:rem_assump}\eqref{hm:rem_noise}.}
The following lemma will be used to establish the $\sqrt{n}$-consistency of the estimator of 
$(\al, \beta, \sig)$.

\begin{lem}\label{moment_tight}
\label{tightness} Under Assumption \ref{assump_AN_TPE},
\begin{equation}
\sqrt{n}\left\{m_n - m(\tz^{-})\right\} = O_p(1).
\end{equation}
\end{lem}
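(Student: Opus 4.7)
My plan is to decompose
\begin{equation}
m_n - m(\tz^-) = (m_n - \bar m_n) + (\bar m_n - m(\tz^-)),
\nonumber
\end{equation}
where $\bar m_n$ is the \emph{oracle} statistic obtained from $m_n$ by replacing every residual $\hat\ep_{S,j}$ (resp.\ $\hat\ep_{C,j}$) with the scaled true noise increment $\sz \ep_{S,j}$ (resp.\ $\sz \ep_{C,j}$). Write $u_n := \mz - \mes$. Then
\begin{equation}
\hat\ep_{S,j} = \sz \ep_{S,j} + R_{S,j}, \qquad \hat\ep_{C,j} = \sz \ep_{C,j} + R_{C,j},
\nonumber
\end{equation}
with $R_{S,j} := \Delta_j X \cdot u_n$ and $R_{C,j} := (\Delta_j X + 2 \Delta_{j-1} X) \cdot u_n$, so that $|R_{S,j}|\vee |R_{C,j}| \lesssim |u_n|$ by Assumption \ref{assump_AN_TPE}\eqref{X_assump}. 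By Theorem \ref{hm:thm_CQMLE}, $\sqrt n u_n$ is $O_p(1)$ with all moments bounded uniformly in $n$.

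For the oracle increment $\sqrt n (\bar m_n - m(\tz^-))$, I would apply the CLT for $m$-dependent sequences to the $1$-dependent family $(\sz \ep_{S,j})_j$ and the $2$-dependent family $(\sz \ep_{C,j})_j$. The required variance conditions reduce to $E_{\tz}[|\sz \ep_{S,2}|^{4r}] + E_{\tz}[|\sz \ep_{C,3}|^{4r}] < \infty$, which follows from the stable moment property $E[|Z|^p]<\infty \Leftrightarrow p<\al$ together with the standing hypothesis $r < \az/4$. For the ratio-type third coordinate I would invoke the delta method through the identity $A_n/B_n - A_0/B_0 = (A_n - A_0)/B_0 - (A_n/(B_n B_0))(B_n - B_0)$, noting that $B_0 = E_{\tz}[|\sz \ep_{C,3}|^{r}]>0$ by positivity of the stable density.

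The bulk of the proof is $\sqrt n (m_n - \bar m_n) = O_p(1)$. I focus on the first coordinate and partition the indices as
\begin{equation}
\mathcal A := \{j : |\sz \ep_{S,j}| > 2|R_{S,j}|\}, \qquad \mathcal B := \mathcal A^c.
\nonumber
\end{equation}
On $\mathcal A$ the integrated Taylor expansion of $x\mapsto|x|^r$ around $\sz \ep_{S,j}$, combined with $|R_{S,j}| < |\sz \ep_{S,j}|/2$, yields
\begin{equation}
\bigl| |\hat\ep_{S,j}|^r - |\sz \ep_{S,j}|^r \bigr| \leq r\, 2^{1-r}\, |\sz \ep_{S,j}|^{r-1}|R_{S,j}|,
\nonumber
\end{equation}
so the $\mathcal A$-contribution to $\sum_j(|\hat\ep_{S,j}|^r - |\sz \ep_{S,j}|^r)$ is at most $C|u_n|\sum_j |\sz \ep_{S,j}|^{r-1}$; since the density of $\sz \ep_{S,j}$ is bounded, $E_{\tz}[|\sz \ep_{S,j}|^{r-1}]<\infty$ (as $r>0 \Rightarrow r-1>-1$), whence by Markov $\sum_j |\sz \ep_{S,j}|^{r-1} = O_p(n)$ and the $\mathcal A$-contribution is $O_p(n|u_n|) = O_p(\sqrt n)$. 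On $\mathcal B$ both $|\hat\ep_{S,j}|^r$ and $|\sz \ep_{S,j}|^r$ are $\lesssim |u_n|^r$, and $|\mathcal B| \leq N_n := \#\{j\leq n : |\sz \ep_{S,j}|\leq 2C_X|u_n|\}$; restricting to the event $\{|u_n|\leq c/\sqrt n\}$, whose complement has $P_{\tz}$-probability arbitrarily close to $0$ by Theorem \ref{hm:thm_CQMLE}, the density bound $P_{\tz}(|\sz \ep_{S,j}|\leq \rho)\leq 2\rho\|\mathfrak f\|_\infty$ gives $N_n = O_p(\sqrt n)$, so the $\mathcal B$-contribution is $O_p(|u_n|^r N_n) = O_p(n^{(1-r)/2}) = o_p(\sqrt n)$. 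Combining, the first coordinate of $\sqrt n (m_n - \bar m_n)$ is $O_p(1)$. The second coordinate is handled identically with $r$ replaced by $2r$ (still $<1$ since $2r < \az/2 \leq 1$), and the third coordinate follows by applying the same $\mathcal A/\mathcal B$ analysis separately to numerator and denominator and combining via the ratio identity above.

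The principal obstacle is that in our regime $r<1/2$ (forced by $r<\az/4$), the variable $|\sz \ep_{S,j}|^{r-1}$ has infinite variance, so a CLT-based control of the $\mathcal A$-term via second moments is unavailable. The Case $\mathcal A/\mathcal B$ splitting reduces the required integrability to first moments only, and the use of the full polynomial tail-probability estimate of Theorem \ref{hm:thm_CQMLE}---rather than mere $\sqrt n$-consistency of $\mes$---is what enables sharp control of $N_n$ through the high-probability event $\{|u_n|\leq c/\sqrt n\}$.
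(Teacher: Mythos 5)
Your proof is correct and follows essentially the same route as the paper's: the same oracle-plus-perturbation decomposition, the same splitting of indices according to whether the noise increment dominates the residual perturbation $R_{S,j}$, the same Taylor bound using $\E_{\tz}[|\ep_{S,2}|^{r-1}]<\infty$ on the good set, the same bounded-density counting bound on the bad set, and Cram\'er's theorem for the ratio coordinate. The one local difference is that you control the bad-index count by localizing on $\{|u_n|\le c/\sqrt{n}\}$ with $c$ fixed, which only requires tightness of $\sqrt{n}(\hat{\mu}_n-\mu_0)$, whereas the paper localizes on $\{|\sqrt{n}(\hat{\mu}_n-\mu_0)|\le n^{1/2-\ep}\}$ and genuinely invokes the polynomial tail-probability estimate \eqref{hm:mu-TPE} so that the complement's probability decays fast enough to absorb the factor $n^{(1-r)/2}$ in a uniform moment bound; consequently your closing remark overstates your own reliance on \eqref{hm:mu-TPE}, since your version of the argument needs only the $\sqrt{n}$-rate of the CQMLE.
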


\begin{proof}
We will complete the proof by showing the component-wise tightness of $\sqrt{n}\left\{m_n - m(\tz^{-})\right\}$:
\begin{align}
& \sqrt{n}\left(\frac{1}{n-1} \sum_{j=2}^{n}|\hat{\ep}_ {S,j}|^{r}- \E_{\tz}[|\ep_{S,2}|^r]\right)= O_p(1),\label{eq:S,r}\\
& \sqrt{n}\left(\frac{1}{n-1} \sum_{j=2}^{n}|\hat{\ep}_ {S,j}|^{2r}- \E_{\tz}[|\ep_{S,2}|^{2r}]\right) = O_p(1),\label{eq:S,2r}\\
& \left. \sqrt{n}\left(\frac{1}{n-2} \sum_{j=3}^{n}|\hat{\ep}_ {C, j}|^{\la r \ra} \middle/ {\frac{1}{n-2}\sum_{j=3}^{n}|\hat{\ep}_ {C, j}|^{r}} \right.- \left. \E_{\tz}[|\ep_{C,3}|^{\la r \ra}] \middle/  \E_{\tz}[|\ep_{C,3}|^{r}]\right. \right)=O_{p}(1). \label{eq:C,<r>}
\end{align}

We begin with \eqref{eq:S,r}. 
Note that
\begin{align}
& \sqrt{n} \left|\frac{1}{n-1} \sum_{j=2}^{n}|\hat{\ep}_ {S,j}|^{r}- \E_{\tz}[|\ep_{S,2}|^r] \right|
\nn\\
&= \sqrt{n} \left| \frac{1}{n-1} \sum_{j=2}^{n}|\ep_ {S,j}|^{r}- \E_{\tz}[|\ep_{S,2}|^r] + \frac{1}{n-1} \sum_{j=2}^{n}(|\hat{\ep}_{S,j}|^{r}- |\ep_{S,j}|^{r}) \right|\\
&\leq \sqrt{n} \left|\frac{1}{n-1} \sum_{j=2}^{n}|\ep_ {S,j}|^{r}- \E_{\tz}[|\ep_{S,2}|^r] \right| + \sqrt{n}\frac{1}{n-1} \sum_{j=2}^{n}
\left| |\hat{\ep}_{S,j}|^{r}- |\ep_{S,j}|^{r} \right|\\
&= O_p(1)+\sqrt{n}\frac{1}{n-1} \sum_{j=2}^{n}
\left| |\hat{\ep}_{S,j}|^{r}- |\ep_{S,j}|^{r} \right|,
\end{align}
where we used the $1$-dependent central limit theorem \cite[Theorem 11]{Fer96} at the last step. Let 
\begin{equation}
I(A):= \begin{cases}
1 & \text{(if $A$ is true)}\\
0 & \text{(if $A$ is false)}.
\end{cases}
\end{equation}
For the second term in the upper bound, we have
\begin{align}
& \sqrt{n}\frac{1}{n-1} \sum_{j=2}^{n}| |\hat{\ep}_{S,j}|^{r}- |\ep_{S,j}|^{r}|
\nn\\
&=\sqrt{n}\bigg\{\frac{1}{n-1} \sum_{j=2}^{n}| |\hat{\ep}_{S,j}|^{r}- |\ep_{S,j}|^{r}|I\left(|\hat{\ep}_{S,j}-\ep_{S,j}|\geq\frac{1}{2}|\ep_{S,j}|\right)\\
& \qquad +\frac{1}{n-1} \sum_{j=2}^{n}| |\hat{\ep}_{S,j}|^{r}- |\ep_{S,j}|^{r}|I\left(|\hat{\ep}_{S,j}-\ep_{S,j}|<\frac{1}{2}|\ep_{S,j}|\right)\bigg\}\\
&=:\sqrt{n}(a_n+b_n).
\end{align}
We will separately prove
\begin{equation}
\sqrt{n}\,a_n=O_p(1)\label{an_tight},
\end{equation}
\begin{equation}
\sqrt{n}\,b_n=O_p(1)\label{bn_tight}.
\end{equation}
Let $\hat{u}_{\mu,n}:=\sqrt{n}(\hat{\mu}_n - \mz)$.
From Assumption \ref{assump_AN_TPE},
\begin{align}
a_n
&\leq \frac{1}{n-1} \sum_{j=2}^{n} |\hat{\ep}_{S,j}- \ep_{S,j}|^{r}I\left(|\Delta_jX|\cdot|\mes-\mu_0|\geq\frac{1}{2}|\ep_{S,j}|\right)\\
&\leq \frac{1}{n-1} \sum_{j=2}^{n} |\Delta_jX|^{r}\cdot|\mes-\mu_0|^{r}I\left(|\Delta_jX|\cdot|\mes-\mu|\geq\frac{1}{2}|\ep_{S,j}|\right)\\
&\lesssim 
|\hat{u}_{\mu,n}|^{r} n^{-\frac{r}{2}} \frac{1}{n-1} \sum_{j=2}^{n} I\left(|\hat{u}_{\mu,n}|\geq \frac{\sqrt{n}}{2}\frac{|\ep_{S,j}|}{C_X}\right)\\
&=: |\hat{u}_{\mu,n}|^{r} n^{-\frac{r}{2}} \frac{1}{n-1} \sum_{j=2}^{n} I(B_{n,j})
=O_p(n^{-\frac{r}{2}}) \frac{1}{n-1} \sum_{j=2}^{n} I(B_{n,j}).
\end{align}
To derive \eqref{an_tight}, it suffices to prove
\begin{equation}
    \E_{\tz}\left[n^{(1-r)/2} \frac{1}{n-1} \sum_{j=2}^{n} I(B_{n,j})\right]=O(1).
\end{equation}
We can take a positive constant $0<\ep<1/2$ and $L>0$ such that $(1-r)/2 \leq \min\{L(1/2-\ep),\, \ep\}$. 
Then, using the inclusion relation
\begin{align}
B_{n,j}
&\subset \{|\hat{u}_{\mu,n}|> n^{\frac{1}{2}-\ep}\} \cup \left \{ |\ep_{S,,j}| \leq \frac{2C_X n^{\frac{1}{2}-\ep}}{\sqrt{n}}\right\}.
\nn
\end{align}
\tcr{Since $\ep_{S,j}$ follows a stable distribution with a bounded continuous density function, we have}
\begin{align}
&\E_{\tz}\left[n^{\frac12-\frac{r}{2}} \frac{1}{n-1} \sum_{j=2}^{n} I(B_{n,j}) \right]\\
&\le n^{\frac12-\frac{r}{2}} \left(\frac{1}{n-1} \sum_{j=2}^{n} \pr_{\tz}[|\hat{u}_{\mu,n}|> n^{\frac{1}{2}-\ep}] + 
  \frac{1}{n-1} \sum_{j=2}^{n} \pr_{\tz}[|\ep_{S,j}| \leq 2C_Xn^{-\ep}]\right) \\
&\lesssim n^{\frac12-\frac{r}{2}}( n^{-L(\frac{1}{2}-\ep)}+n^{-\ep} ) \lesssim 1.
\end{align}
This concludes \eqref{an_tight}.
To prove \eqref{bn_tight}, we apply Taylor's theorem:
\begin{align}
b_n
&=\frac{1}{n-1} \sum_{j=2}^{n}\left| \int_{0}^{1}r \{ |\ep_{S,j}| + u(|\hat{\ep}_{S,j}|- |\ep_{S,j}|)\}^{r-1}du (|\hat{\ep}_{S,j}| - |\ep_{S,j}|) \right| 
\nn\\
&{}\qquad\times I\left(|\hat{\ep}_{S,j}-\ep_{S,j}|<\frac{1}{2}|\ep_{S,j}|\right)\\
&\lesssim \frac{1}{n-1} \sum_{j=2}^{n}
|\ep_{S,j}|^{r-1} |\Delta_jX|| \hat{\mu}_n - \mz|I\left(|\hat{\ep}_{S,j}-\ep_{S,j}|<\frac{1}{2}|\ep_{S,j}|\right)\\
&\lesssim \frac{1}{n-1} \sum_{j=2}^{n}
|\ep_{S,j}|^{r-1} |\hat{\mu}_n - \mz|
\nn\\
&\lesssim 
n^{-1/2}\frac{1}{n-1} \sum_{j=2}^{n}|\ep_{S,j}|^{r-1}|\hat{u}_{\mu,n}| = O_p(n^{-\frac{1}{2}}).
\end{align}
Here, we used the fact that $\E[|\ep_{S,2}|^{r-1}]<\infty$ in the last step. 
Thus, we obtained \eqref{eq:S,r}, and we can show \eqref{eq:S,2r} in the same way as in \eqref{eq:S,r}.

\medskip

It remains to prove \eqref{eq:C,<r>}. Similar to \eqref{eq:S,r}, we obtain
\begin{equation}
    \sqrt{n}\left(\frac{1}{n-2} \sum_{j=3}^{n}|\hat{\ep}_{C,j}|^{r}- \E_{\tz}[|\ep_{C,3}|^r]\right) = O_p(1).
    \nn
\end{equation}
By this estimate and Cram\'{e}r's theorem \rev{\cite[p.45]{Fer96}}, it suffices for \eqref{eq:C,<r>} to prove
\begin{equation}
\sqrt{n}\left|\frac{1}{n-2} \sum_{j=3}^{n}|\hat{\ep}_ {C,j}|^{\la r \ra}- \E_{\tz}[|\ep_{C,3}|^{\la r \ra}] \right|=O_p(1). \label{<r>:tight}
\end{equation}
We note that for all $x,y \in \mbbr$ and $r \in (0,1)$,
\begin{align}
||x|^{\la r \ra}- |y|^{\la r \ra}|
&=||x|^{\la r \ra}- |y|^{\la r \ra}|I(\sgn(x)=\sgn(y)) 
\nn\\
&{}\qquad + ||x|^{\la r \ra}- |y|^{\la r \ra}|I(\sgn(x) \neq \sgn(y))\\
&\leq ||x|^{r}- |y|^{r}| + \left\{|x|^{r} + |y|^{r}\right\}I(\sgn(x) \neq \sgn(y))\\
&\leq ||x|^{r}- |y|^{r}| + \left\{|x|^{r} + |y|^{r}\right\}I(|x-y| > |y|)
\\
&\lesssim |x- y|^{r}.
\end{align}
It follows that
\begin{align}
& \sqrt{n} \Biggl|\frac{1}{n-2} \sum_{j=3}^{n}|\hat{\ep}_ {C,j}|^{\la r \ra}- \E_{\tz}[|\ep_{C,3}|^{\la r \ra}]\Biggr|
\nn\\
&\leq \sqrt{n} \Biggl|\frac{1}{n-2} \sum_{j=3}^{n} \left(|\ep_ {C,j}|^{\la r \ra}- \E_{\tz}[|\ep_{C,3}|^{\la r \ra}]\right) \Biggr|
+ \frac{\sqrt{n}}{n-2} \sum_{j=3}^{n}\left||\hat{\ep}_{C,j}|^{\la r \ra}- |\ep_{C,j}|^{\la r \ra}\right|\\
&\lesssim O_p(1) + \frac{\sqrt{n}}{n-2} \sum_{j=3}^{n}|\hat{\ep}_{C,j}- \ep_{C,j}|^{r} = O_p(1),
\end{align}
hence \eqref{<r>:tight}.
\end{proof}

\medskip

Now we introduce our estimator $(\hat{\al}_n^{(0)}, \hat{\be}_n^{(0)}, \hat{\sig}_n^{(0)})$ of $(\al,\beta,\sig)$.
From \cite{Kur01},
\begin{equation}\label{hm:S-moment}
\E_{\theta} [|\ep_{S,2}|^r]= \sig^r\,\frac{\Gamma(1- \frac{r}{\al})}{\Gamma(1-r)} \frac{2^{\frac{r}{\al}}}{\cos \frac{r \pi}{2}}.
\end{equation}
Let $h_{r}(\al,\sig):= \E_{\tz} [|\ep_{S,2}|^r]$.
The moment estimator $\hat{\al}_n^{(0)}$ is the solution of the following equation:
\begin{equation}
\frac{(\frac{1}{n-1} \sum_{j=2}^{n}|\hat{\ep}_ {S,j}|^{r})^2}{\frac{1}{n-1} \sum_{j=2}^{n}|\hat{\ep}_ {S,j}|^{2r}} = \frac{h_{r}(\al,\sig)}{h_{2r}(\al,\sig)}.
\end{equation}
Since $\sig$ on the right hand side disappears, we can obtain $\hat{\al}_n^{(0)}$ by solving this equation. Then, we can obtain $\hat{\sig}_n^{(0)}$ by solving
\begin{equation}
\frac{1}{n-1} \sum_{j=2}^{n}|\hat{\ep}_ {S,j}|^{r} = h_{r}(\hat{\al}_n^{(0)},\sig).
\end{equation}
Next, to obtain the moment estimator of $\beta$, we note the following expressions \cite{Kur01}:
\begin{equation}
\E_{\theta}[|\ep_{C,3}|^{r}]= \frac{\Gamma(1- \frac{r}{\al})}{\Gamma(1-r)} \left| \frac{(2+2^{\al})\sig^{\al}}{\cos\eta}\right|^{\frac{r}{\al}} \frac{\cos\left( \frac{r\eta}{\al}\right)}{\cos\left( \frac{r\pi}{2}\right)}, 
\end{equation}
\begin{equation}
\E_{\theta}[|\ep_{C,3}|^{\la r \ra}]= \frac{\Gamma(1- \frac{r}{\al})}{\Gamma(1-r)} \left| \frac{(2+2^{\al})\sig^{\al}}{\cos\eta}\right|^{\frac{r}{\al}} \frac{\sin\left( \frac{r\eta}{\al}\right)}{\sin\left( \frac{r\pi}{2}\right)},
\end{equation}
where
\begin{equation}
    \eta:=\arctan\left( \left(\frac{2-2^\al}{2+2^{\al}} \right) \be \tan\frac{\al \pi}{2}\right).
\end{equation}
Taking the ratio of the above two equations together with their empirical counterparts and then substituting $\hat{\al}_n^{(0)}$ into $\al$, we obtain the following estimating equation for $\eta$:
\begin{equation}
\frac{\frac{1}{n-2} \sum_{j=3}^{n}|\hat{\ep}_ {C,j}|^{\la r \ra}}{ \frac{1}{n-2}\sum_{j=3}^{n}|\hat{\ep}_ {C,j}|^{r}} 
= \frac{\tan\frac{r\eta}{\hat{\al}_n^{(0)}}}{\tan{\frac{r\pi}{2}}}.
\end{equation}
By solving the above equation,
\begin{equation}
\hat{\eta}_n = \frac{\hat{\al}_n^{(0)}}{r} \arctan \left( \tan\left( \frac{r\pi}{2}\right) \frac{\frac{1}{n-2} \sum_{j=3}^{n}|\hat{\ep}_ {C,j}|^{\la r \ra}}{ \frac{1}{n-2}\sum_{j=3}^{n}|\hat{\ep}_ {C,j}|^{r}} \right).
\end{equation}
From this and the definition of $\eta$,
\begin{equation}
\hat{\be}_n^{(0)} = \left( \frac{2+2^{\hat{\al}_n ^{(0)}}}{2-2^{\hat{\al}_n^{(0)}}}\right)
\frac{\tan\hat{\eta}_n}{\tan{\frac{\hat{\al}_n \pi}{2}}}.
\end{equation}
Let $\tes^{(0)}:= (\hat{\al}_n^{(0)}, \hat{\be}_n^{(0)}, \hat{\sig}_n^{(0)}, \hat{\mu}_n)$ with $\hat{\mu}_n$ being the CQMLE.
Our main claim is the following.

\begin{thm} \label{init_tight}
Under Assumption \ref{assump_AN_TPE},
\begin{equation}
\sqrt{n}\left(\tes^{(0)}-\tz \right)=O_p(1).
\end{equation}
\end{thm}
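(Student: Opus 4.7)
The overall strategy is to view the three-dimensional initial estimator $(\hat{\al}_n^{(0)}, \hat{\be}_n^{(0)}, \hat{\sig}_n^{(0)})$ as the image $g(m_n)$ of the moment vector $m_n$ under a smooth deterministic map $g$, and then combine Lemma~\ref{tightness} with a delta-method argument. Since Theorem~\ref{hm:thm_CQMLE} already gives $\sqrt{n}(\hat{\mu}_n - \mu_0) = O_p(1)$, it suffices to prove $\sqrt{n}((\hat{\al}_n^{(0)}, \hat{\be}_n^{(0)}, \hat{\sig}_n^{(0)}) - (\al_0, \be_0, \sig_0)) = O_p(1)$.

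Writing $h_r(\al,\sig) = \sig^r C(\al,r)$ with $C(\al,r) := 2^{r/\al}\Gamma(1 - r/\al)/[\Gamma(1-r)\cos(r\pi/2)]$, the defining equation for $\hat{\al}_n^{(0)}$ becomes
\[
\Psi(\hat{\al}_n^{(0)}) = \frac{(m_{n,1})^2}{m_{n,2}},\qquad \Psi(\al) := \frac{C(\al,r)^2}{C(\al,2r)},
\]
in which the scale $\sig$ has cancelled. A direct computation shows that the $\al$-dependent part of $\Psi(\al)$ is proportional to $\Gamma(1-r/\al)^2/\Gamma(1-2r/\al)$, whose logarithmic derivative with respect to $\al$ equals $(2r/\al^2)[\psi(1-r/\al) - \psi(1-2r/\al)]$, where $\psi$ is the digamma function. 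Since $\psi$ is strictly increasing on $(0,\infty)$ and since $r<\al_0/4$ ensures $0 < 2r/\al_0 < 1/2$, this derivative is strictly positive at $\al_0$, so $\Psi'(\al_0)\neq 0$. The implicit function theorem then yields a $C^1$ inverse $\Psi^{-1}$ on a neighbourhood of $\Psi(\al_0)$, and $\hat{\al}_n^{(0)} = \Psi^{-1}((m_{n,1})^2/m_{n,2})$ is a $C^1$ function of $m_n$ near $m(\tz^{-})$. Plugging this into $h_r(\hat{\al}_n^{(0)},\sig) = m_{n,1}$ gives
\[
\hat{\sig}_n^{(0)} = \left(\frac{m_{n,1}}{C(\hat{\al}_n^{(0)},r)}\right)^{1/r},
\]
again $C^1$; and $\hat{\eta}_n$ and then $\hat{\be}_n^{(0)}$ are built from $\hat{\al}_n^{(0)}$ and $m_{n,3}$ through the explicit formulae preceding the statement of the theorem, which are smooth at the true value (assuming $\al_0 \neq 1$, so that the factor $(2+2^{\al})/(2-2^{\al}) \cdot 1/\tan(\al\pi/2)$ remains bounded; the apparent singularity at $\al = 1$ is removable under the $0$-parametrization by continuous extension). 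Collecting these pieces produces the desired $C^1$ map $g$ on a neighbourhood of $m(\tz^{-}) \in \mathbb{R}^3$ with $g(m(\tz^{-})) = (\al_0,\be_0,\sig_0)$ by the moment identities that underlie the estimating equations.

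With $g$ in hand, the delta method gives
\[
\sqrt{n}\bigl(g(m_n) - g(m(\tz^{-}))\bigr) = \nabla g(m(\tz^{-}))\,\sqrt{n}(m_n - m(\tz^{-})) + o_p(1) = O_p(1),
\]
where Lemma~\ref{tightness} supplies both the tightness of $\sqrt{n}(m_n - m(\tz^{-}))$ and the fact that $m_n$ falls in the relevant neighbourhood with probability tending to one. Combining this with $\sqrt{n}(\hat{\mu}_n - \mu_0) = O_p(1)$ from Theorem~\ref{hm:thm_CQMLE} yields the claimed tightness $\sqrt{n}(\tes^{(0)} - \tz) = O_p(1)$. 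The main technical obstacle is the verification of $\Psi'(\al_0) \neq 0$, without which the inversion producing $\hat{\al}_n^{(0)}$ could be degenerate; once this monotonicity of $\Psi$ is secured via the digamma argument above, all remaining steps amount to routine differentiations of the explicit formulae, the composition of $C^1$ maps, and an application of the delta method.
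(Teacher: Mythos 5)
Your proposal is correct and follows essentially the same route as the paper: tightness of $\sqrt{n}(m_n - m(\theta_0^-))$ from Lemma \ref{tightness}, a nondegeneracy check that reduces to $\psi(1-r/\alpha_0)-\psi(1-2r/\alpha_0)\neq 0$ via monotonicity of the digamma function (the paper phrases this as non-singularity of the $2\times 2$ Jacobian $M(\alpha_0,\sigma_0)$ rather than as $\Psi'(\alpha_0)\neq 0$ after eliminating $\sigma$, but the two are equivalent), and then Cram\'er's theorem/the delta method, with the removable singularity of $(2-2^{\alpha})\tan(\alpha\pi/2)$ at $\alpha=1$ handled exactly as in the paper's Lemma \ref{hm:lem-add1}.
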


\begin{proof}
By Theorem \ref{hm:thm_CQMLE}, it suffices to show the tightness $\sqrt{n}(\aes^{(0)}-\al_0,\,\ses^{(0)}-\sig_0)$ and $\sqrt{n}(\hat{\beta}_{n}^{(0)}- \beta_{0})$ separately.

Let $m_1(\al,\sig):=\left(\E_{\theta}[|\ep_{S,2}|^{r}],\,\E_{\theta}[|\ep_{S,2}|^{2r}]\right)$ and $M(\al,\sig):= \p_{(\alpha,\sig)}m_{1}(\al, \sig)$.
By the expression \eqref{hm:S-moment}, the determinant of $M(\az,\sz)$ is written as
\begin{align}
|M(\az,\sz)|
&=\frac{\Gam(1-\frac{r}{\az}) \Gam(1-\frac{2r}{\az})}{\Gam(1-2r)\Gam(1-r)} \frac{2^{\frac{3r}{\az}+1}\sz^{3r+1}r^2}{\az^2 \cos(\frac{r\pi}{2})\cos(r\pi)} \left(\psi \left(1-\frac{r}{\az}\right)- \psi \left(1-\frac{2r}{\az}\right)\right),
\end{align}
where $\psi$ denotes the digamma function.
Since
\begin{equation}
    \frac{\Gam(1-\frac{r}{\az}) \Gam(1-\frac{2r}{\az})}{\Gam(1-2r)\Gam(1-r)} \frac{2^{\frac{3r}{\al}+1}\sig^{3r+1}r^2}{\al^2 \cos(\frac{r\pi}{2})\cos(r\pi)} \neq 0
\end{equation}
under \eqref{hm:r_region} and since $\psi(1-\frac{r}{\az})- \psi(1-\frac{2r}{\az}) \neq 0$ because $\psi$ is strictly increasing, it follows that $M(\az,\sz)$ is non-singular.
Lemma \ref{moment_tight} and Cram\'{e}r's theorem give the tightness of $\sqrt{n}(\aes^{(0)}-\al_0,\,\ses^{(0)}-\sig_0)$.

Write $\bes^{(0)}= H_n(\aes^{(0)})$, where
\begin{align}
H_{n}(\al) &:= 
\frac{(2+2^{\al})\tan(\frac{\al}{r}\arctan(T_n \tan(\frac{r\pi}{2})))}{(2-2^{\al})\tan(\frac{\al\pi}{2})},\label{hm:H_def}\\
T_n &:=\frac{1}{n-2} \sum_{j=3}^{n}|\hat{\ep}_ {C, j}|^{\la r \ra} \bigg/{\frac{1}{n-2}\sum_{j=3}^{n}|\hat{\ep}_ {C, j}|^{r}}.
\end{align}
The tightness $\sqrt{n}(\bes^{(0)} - \beta_{0})=\sqrt{n}\{ H_{n}(\aes^{(0)})- \beta_{0}\}=O_p(1)$ follows from showing $\sqrt{n} \{H_{n}(\al_0)- \beta_{0}\} =O_p(1)$ and $\p_{\al}H_{n}(\tilde{\al}_{n})=O_p(1)$.
The former is obvious by Cram\'{e}r's theorem and the latter follows from Lemma \ref{hm:lem-add1} below.
The proof is complete.
\end{proof}

\rev{
It is possible to deduce the asymptotic normality of $\tes^{(0)}$ at rate $\sqrt{n}$ by the standard theory based on the delta method (see \cite[Section 7]{Fer96}, \cite[Chapter 4]{vdV98}, and so on). Nevertheless, the derivation of the closed form of the asymptotic covariance matrix would be very complicated and messy, and we will not pursue it.
}

\begin{lem}
\label{hm:lem-add1}
For any $\tilde{\al}_{n} \cip \alpha_0$, we have
\begin{equation}
\p_{\al}H_{n}(\tilde{\al}_{n}) \cip \frac{\p_{\al} g(T_0, \al_{0})h(\al_{0})-g(T_0, \al_{0})\p_{\al}h(\al_{0})}{h(\al_{0})^2},
\end{equation}
where $h(\al)$ and $g(x,\al)$ are defined in the proof.
 \end{lem}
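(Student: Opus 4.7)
The plan is to identify $H_n(\al)$ as a ratio $g(T_n, \al)/h(\al)$ of a jointly smooth numerator in $(T_n,\al)$ and a smooth denominator in $\al$, apply the quotient rule, and conclude by the continuous mapping theorem. Concretely, I would set
\begin{align}
g(x, \al) &:= (2+2^{\al}) \tan\left(\frac{\al}{r}\arctan\left(x \tan\left(\frac{r\pi}{2}\right)\right)\right), \\
h(\al) &:= (2-2^{\al})\tan\left(\frac{\al \pi}{2}\right),
\end{align}
so that by \eqref{hm:H_def} we have $H_n(\al) = g(T_n, \al)/h(\al)$.

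The first substantive task is to check that $h(\al_0)\ne 0$ for any $\al_0\in(0,2)$. The only potential trouble spot is $\al=1$, where $2-2^\al$ has a simple zero while $\tan(\al\pi/2)$ has a simple pole; a short Taylor expansion around $\al=1$ shows these cancel to give the finite positive limit $4\log 2/\pi$, and away from $\al=1$ both factors are nonzero on $(0,2)$. Consequently $h$ is smooth and nonvanishing on $(0,2)$, while $g(x,\cdot)$ is smooth for all $x$ (since $\arctan$ and $\tan(\al\,\cdot/r)$ are bounded in modulus away from the poles of $\tan$, provided $r\in(0,\al_0/4)$ as in \eqref{hm:r_region}). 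The quotient rule then gives
\begin{equation}
\p_\al H_n(\al) = F(T_n,\al), \qquad F(x,\al) := \frac{\p_\al g(x,\al)\, h(\al) - g(x,\al)\, \p_\al h(\al)}{h(\al)^2},
\end{equation}
where $F$ is jointly continuous at $(T_0,\al_0)$, with $T_0:=\E_{\tz}[|\ep_{C,3}|^{\la r\ra}]/\E_{\tz}[|\ep_{C,3}|^{r}]$.

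The remaining step is the passage to the limit. Lemma \ref{moment_tight} delivers $\sqrt{n}(T_n-T_0)=O_p(1)$, hence $T_n\cip T_0$; combined with the hypothesis $\tilde{\al}_n\cip\al_0$ this yields the joint convergence $(T_n,\tilde{\al}_n)\cip(T_0,\al_0)$ (marginal convergence in probability implies joint convergence in probability via the triangle inequality). Applying the continuous mapping theorem to $F$ then gives
\begin{equation}
\p_\al H_n(\tilde{\al}_n) = F(T_n,\tilde{\al}_n) \cip F(T_0,\al_0) = \frac{\p_\al g(T_0,\al_0)\,h(\al_0) - g(T_0,\al_0)\,\p_\al h(\al_0)}{h(\al_0)^2},
\end{equation}
which is the stated limit. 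The main (and very mild) obstacle is the careful handling of the removable singularity of $h$ at $\al=1$; once one observes that $h(\al_0)\ne 0$ throughout the parameter space, the rest is just the continuous mapping theorem applied to the quotient-rule expression for $\p_\al H_n$.
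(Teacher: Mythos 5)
Your proposal is correct and follows essentially the same route as the paper: identify $g$ and $h$ as the numerator and denominator of \eqref{hm:H_def}, verify that $h$ is $C^1$ and nonvanishing on $(0,2)$ (with the removable singularity at $\al=1$ giving the limit $\frac{4}{\pi}\log 2$), and conclude via the quotient rule, the consistency $T_n\cip T_0$ from Lemma \ref{moment_tight}, and the continuous mapping theorem. The paper treats the smoothness of $g$ just as briefly as you do, so there is no substantive difference.
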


\begin{proof}
Denote the numerator and denominator in \eqref{hm:H_def} by $g(x,\al)$ and $h(\al)$, respectively.
It is obvious that $(x,\al) \mapsto g(x,\al)$ is $C^1$-class. 
%
It remains to show that $h(\al) \neq 0$ for $\al \in (0,2)$ and that $h(\al)$ is $C^1$-class.
It is obvious that $h(\al) > 0$; as was mentioned in the introduction, we can easily check $\lim_{\al \rightarrow 1}h(\al)=\frac{4}{\pi}\log2$ \cite[p.189]{Nol98}.
For $\al\ne 1$,
\begin{align}
\p_{\al}h(\al)
&=-(\log2)2^{\al}\tan \left(\frac{\al\pi}{2} \right) + \frac{\pi}{2}\frac{(2-2^{\al})}{\cos^2(\frac{\al\pi}{2})}\\
&= \frac{\pi}{2}(2-2^{\al})\tan^2 \left(\frac{\al\pi}{2} \right)- (2\log2)\tan \left(\frac{\al\pi}{2}\right)+\frac{\pi}{2}(2-2^{\al})
\nn\\
&{}\qquad +(\log2)(2-2^{\al})\tan \left(\frac{\al\pi}{2} \right).
\end{align}
Again by the fact that $\lim_{\alpha \to 1} \tan(\al\pi/2) (|u|^{1 - \alpha} - 1) = (2/\pi)\log |u|$, it is straightforward to see that $\p_{\al}h(\al)$ is continuous in $\al\in(0,2)$.
\end{proof}

\begin{rem}
\label{hm:rem_onestep}
Let $\ell_n(\theta)$ denote the log-likelihood function based on $\{(X_j,Y_j)\}_{j\leq n}$:
\begin{equation}
    \ell_n(\theta) = \sumj \log\left\{
    \frac{1}{\sigma}\phi_{\al,\be}\left(
    \frac{Y_j - X_j \cdot \mu}{\sigma}
    \right)\right\}.
\end{equation}
It is known that the one-step estimator $\tes^{(1)} := \tes^{(0)} - {\ell_n'(\tes^{(0)})}^{-1}l_n''(\tes^{(0)})$ is asymptotically equivalent to the MLE when $\ell_n$ is smooth enough and $\tes^{(0)}$ is $\sqrt{n}$-consistent \cite[Theorem 5.5.4]{Zac71}. 
However, the calculation of $\tes^{(1)}$ turned out to be very time-consuming and unstable.
The direct numerical optimization of $\ell_n$ with reasonable initial values may result in better performance; see Table \ref{tab:numerical} in the introduction.
We remark that, in the i.i.d.-$S^{0}_{\alpha} (\beta,\sigma,\mu)$ framework, \cite{Mat21} recently studied the fundamental properties and asymptotic behaviors of the likelihood functions and the associated MLE.
\end{rem}

\begin{rem}[Incorporation of intercept]
    \label{hm:rem_location}
Suppose that none of the $q$ components of $j\mapsto X_j$ is constant and that the model is
\begin{equation}
    Y_j=m+X_j\cdot\mu + \ep_j
\end{equation}
with an additional unknown location parameter $m\in\mbbr$. Under suitable conditions on $X_1,X_2,\dots$ as before, 
we can estimate $\theta=(\al,\beta,\sig,\mu)$ without reference to $m$; 
the intercept $m$ is not estimable by the procedure we have seen so far since taking differences eliminates $m$.
Nevertheless, once we obtain $\tes^{(0)}=(\aes^{(0)},\bes^{(0)},\ses^{(0)},\mes^{(0)})$, 
\rev{
loosely speaking, the random variables $(\ses^{(0)})^{-1}(Y_j - m_0 - X_j\cdot \mes^{(0)})$ ($j\le n$) are approximately distributed as $S_{\aes^{(0)}}(\bes^{(0)},1,0)$, where $m_0$ denotes the true value of $m$.
}
According to the standard $M$-estimation theory, it is readily expected that we could estimate $m$ at rate $\sqrt{n}$ by any element maximizing the random function
\begin{align*}
    m &\in \argmax_m 
    \sumj 
    \log\left(
    \frac{1}{\ses^{(0)}}\phi_{\aes^{(0)},\bes^{(0)}}
    \left(\frac{Y_j - m - X_j\cdot \mes^{(0)}}{\ses^{(0)}}\right)
    \right)
    \nn\\
    &=\argmax_m 
    \sumj 
    \log\phi_{\aes^{(0)},\bes^{(0)}}
    \left(\frac{Y_j - m - X_j\cdot \mes^{(0)}}{\ses^{(0)}}\right).
\end{align*}
\end{rem}



\section{Numerical experiments}\label{sec:numerical}

In this section, we conduct the numerical experiments of the parameter estimation. We consider the following statistical model:
\begin{equation}
Y_j = X_j \cdot \mu + \sigma \epsilon_j,
\end{equation}
where $X_j,\mu \in \mbbr^3$. The simulation design is as follows.
\begin{itemize}
\item True value:\ $\al\in\{0.8, 1.0,1.5\}$,\ $\be=0.5$,\ $\sig=1.5$,\ $\mu=(5,2,3)$.
\item Sample size:\ $n=300,~500,~1000,~1500$.
\item Each component of $X_j$ is generated independently from the uniform distribution $U(1,5)$.
\end{itemize}
\rev{We set $r=0.01$ for the order of the moment estimator.}
We generated $\ep_1,\ep_2,\dots$ by the method based on \cite[Theorem 1.3]{Nol20} and used the R package $\mathtt{stabledist}$ to calculate the probability density function.
For each setting, we implemented 1000 Monte Carlo trials.

In Section \ref{hm:sec_sim.IE}, we will observe finite-sample performance of the initial estimator studied in Sections \ref{sec:CQMLE} and \ref{sec:stable}. For comparison, we also compute 
the LADE $\hat{\mu}_{LADE}$ and the least-squares estimator (LSE) $\hat{\mu}_{LSE}$, and also the estimators of $(\al,\beta,\sigma)$ through the method of moments based on the $\hat{\mu}_{LADE}$ and $\hat{\mu}_{LSE}$. 
Here, the LADE and LSE are defined as any elements such that
\begin{align}
\hat{\mu}_{LADE} &\in \argmin_{\mu \in \Theta_{\mu}} \sum_{j=2}^{n}|\Delta_jY - \Delta_jX \cdot \mu|,
\nn\\
\hat{\mu}_{LSE} &\in \argmin_{\mu \in \Theta_{\mu}} \sum_{j=2}^{n}|\Delta_jY - \Delta_jX \cdot \mu|^2;
\end{align}
The LSE's asymptotic (non-normal) distribution is known under some constraints, but practically inconvenient to handle.
Then, in Section \ref{hm:sec_sim.MLE}, we will present the histograms of the MLE of $\theta$ through the numerical optimization with the CQMLE and the associated initial estimator of $(\al,\beta,\sigma)$ as the initial value for numerical search at each trial.

\subsection{Initial estimators}
\label{hm:sec_sim.IE}


For each true value, we first drew the boxplots of the initial estimators when we used the LADE, the LSE, and the CQMLE as the estimator of the regression coefficient.
Next, we drew the histograms of the normalized estimator $\sqrt{n}(\tes^{(0)} - \tz)$ for $n=300,~500$.

The boxplots in Figures \ref{hm:fig_bp-1}, \ref{hm:fig_bp-2} and \ref{hm:fig_bp-3} show that the initial estimators when we use the LADE or the CQMLE converge to the true value as the sample size increases. Also, the initial estimator when we use the CQMLE converges faster than that with the LADE. When we use the LSE, the initial estimator does not converge to the true value. It is proved theoretically that the initial estimator when we use the LSE does not converge to the true value when $0<\alpha_0\le 1$. 
This phenomenon could also be observed in the corresponding histograms (Figures \ref{hm:fig_hist-08-1}, \ref{hm:fig_hist-08-2}, \ref{hm:fig_hist-1-1}, \ref{hm:fig_hist-1-2}, 
\ref{hm:fig_hist-15-1}, and \ref{hm:fig_hist-15-2}).

Overall, the proposed estimators: the CQMLE and the subsequent moment estimators are numerically stable and require short computation time, so they are suitable for the initial values of the maximum likelihood estimation.


\begin{figure}[h]
  \begin{minipage}[b]{0.45\linewidth}
    \centering
    \includegraphics[width=70mm, height=60mm]{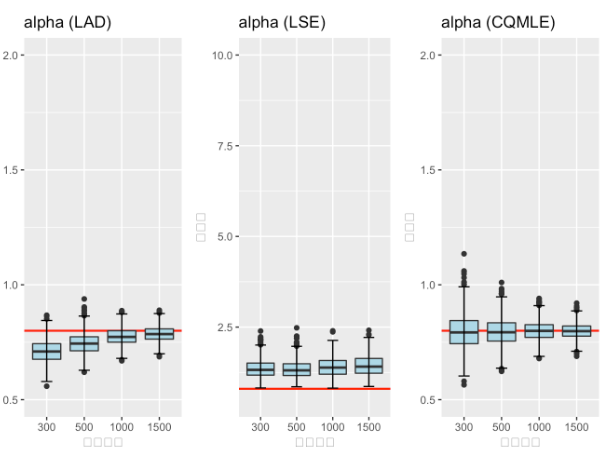}
  \end{minipage}
  \begin{minipage}[b]{0.45\linewidth}
    \centering
    \includegraphics[width=70mm, height=60mm]{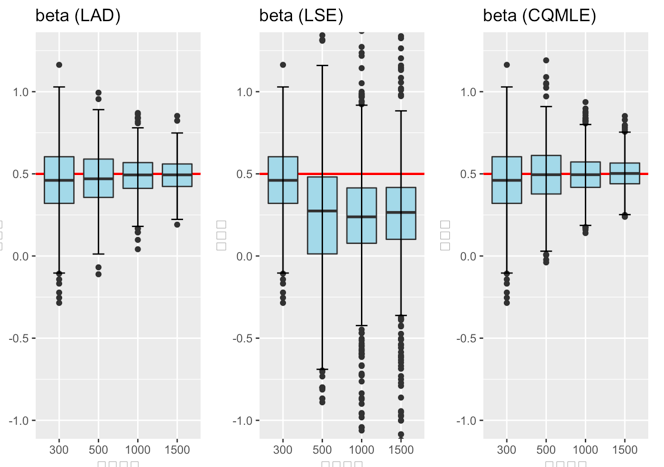}
  \end{minipage}
  \begin{minipage}[b]{0.45\linewidth}
    \centering
    \includegraphics[width=70mm, height=60mm]{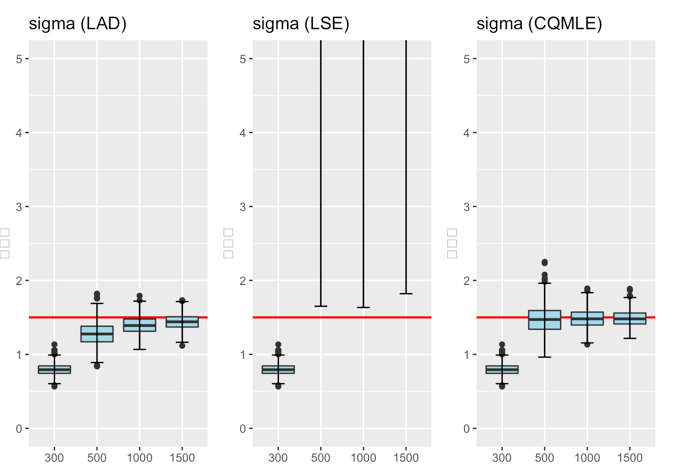}
  \end{minipage}
  \begin{minipage}[b]{0.45\linewidth}
    \centering
    \includegraphics[width=70mm, height=60mm]{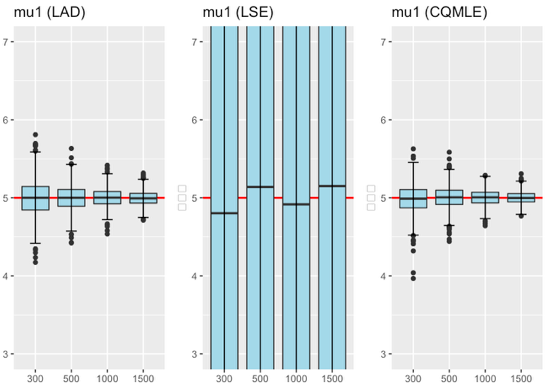}
  \end{minipage}
  \begin{minipage}[b]{0.45\linewidth}
    \centering
    \includegraphics[width=70mm, height=60mm]{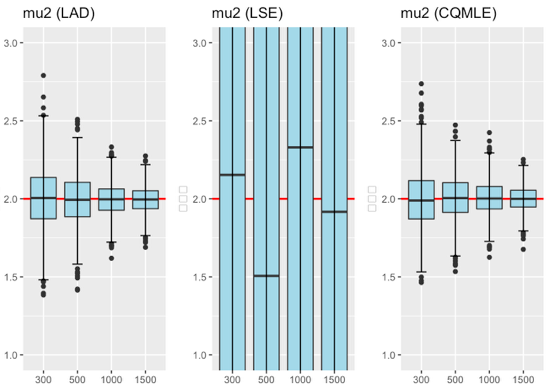}
  \end{minipage}
  \begin{minipage}[b]{0.45\linewidth}
    \centering
    \includegraphics[width=70mm, height=60mm]{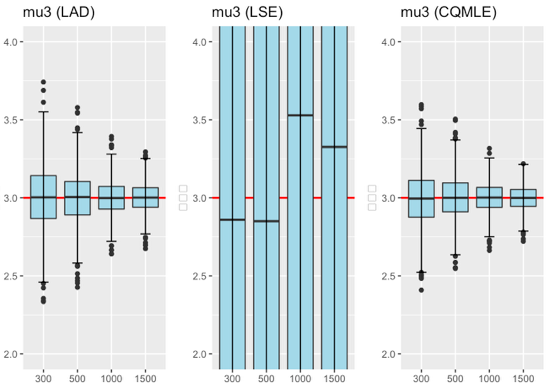}
  \end{minipage}
  \caption{Boxplots of the initial estimators for $n=300,~500,~1000,~1500$ when the true value is $(\al, \be, \sig, \mu)= (0.8, 0.5, 1.5, (5,2,3))$; in each panel, the red line shows the true value.
      }  
      \label{hm:fig_bp-1}
\end{figure}
\newpage
\begin{figure}[h]
  \begin{minipage}[b]{0.45\linewidth}
    \centering
    \includegraphics[width=70mm, height=60mm]{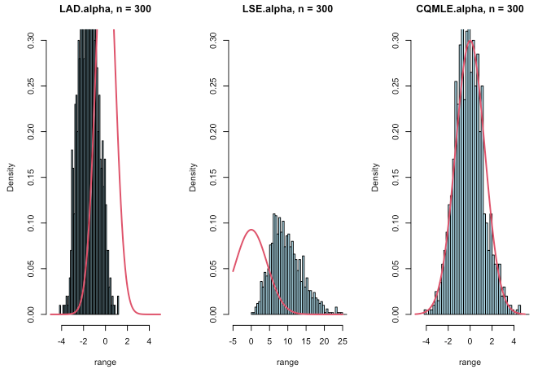}
  \end{minipage}
  \begin{minipage}[b]{0.45\linewidth}
    \centering
    \includegraphics[width=70mm, height=60mm]{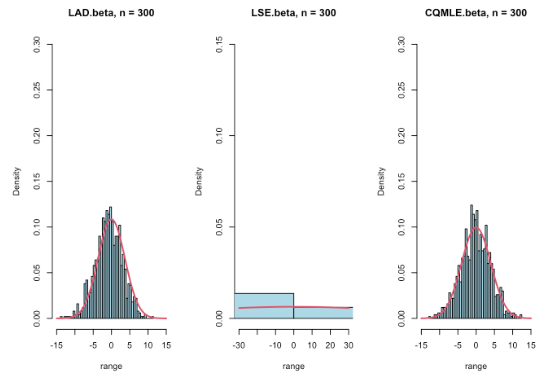}
  \end{minipage}
  \begin{minipage}[b]{0.45\linewidth}
    \centering
    \includegraphics[width=70mm, height=60mm]{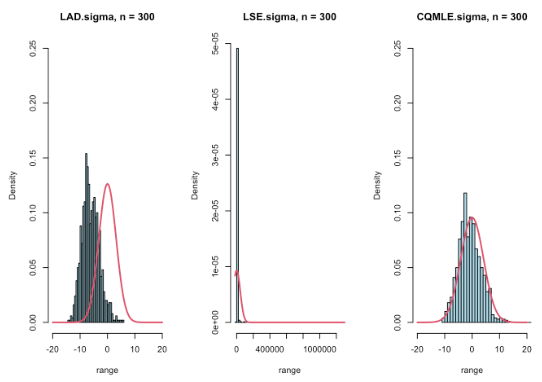}
  \end{minipage}
  \begin{minipage}[b]{0.45\linewidth}
    \centering
    \includegraphics[width=70mm, height=60mm]{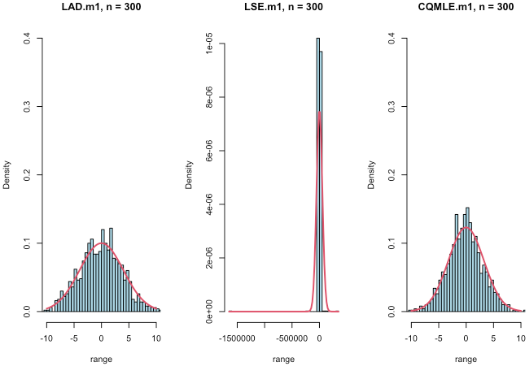}
  \end{minipage}
  \begin{minipage}[b]{0.45\linewidth}
    \centering
    \includegraphics[width=70mm, height=60mm]{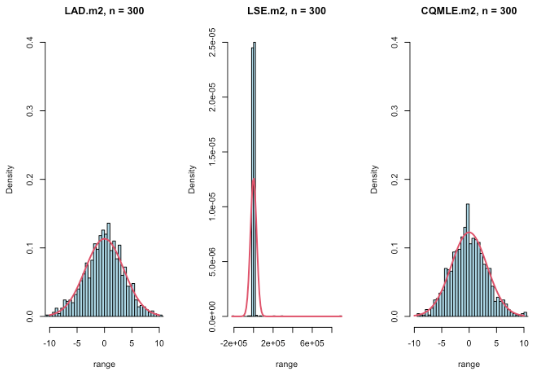}
  \end{minipage}
  \begin{minipage}[b]{0.45\linewidth}
    \centering
    \includegraphics[width=70mm, height=60mm]{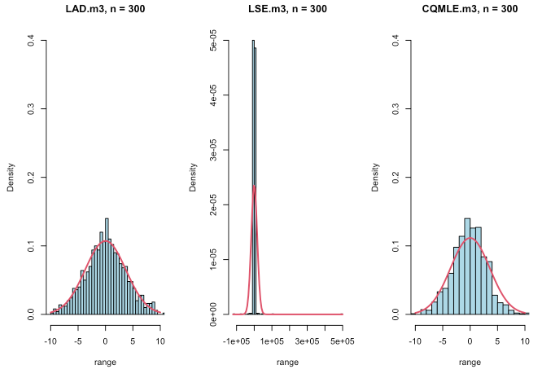}
  \end{minipage}
  \caption{Histograms of the initial estimators for $n=300$ when the true value is $(\al, \be, \sig, \mu)= (0.8, 0.5, 1.5, (5,2,3))$; in each panel, the red line represents the probability density function of the normal distribution with the mean 0 and the variance equal to that of the normalized estimator.
      }  
      \label{hm:fig_hist-08-1}
\end{figure}
\newpage
\begin{figure}[h]
  \begin{minipage}[b]{0.45\linewidth}
    \centering
    \includegraphics[width=70mm, height=60mm]{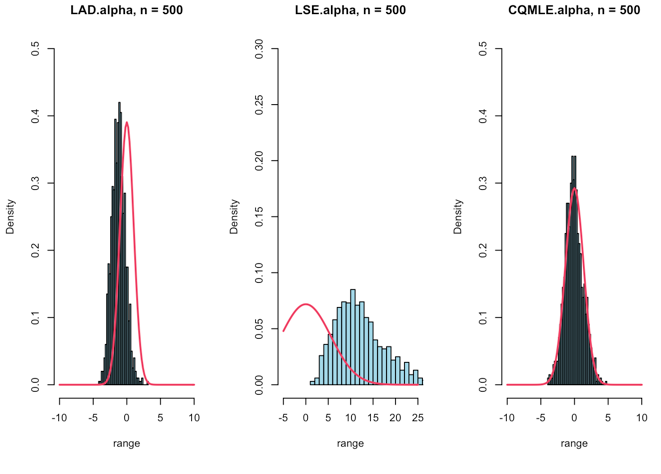}
  \end{minipage}
  \begin{minipage}[b]{0.45\linewidth}
    \centering
    \includegraphics[width=70mm, height=60mm]{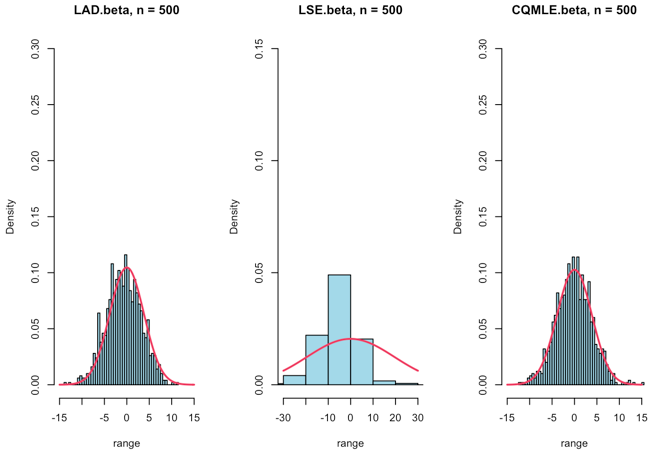}
  \end{minipage}
  \begin{minipage}[b]{0.45\linewidth}
    \centering
    \includegraphics[width=70mm, height=60mm]{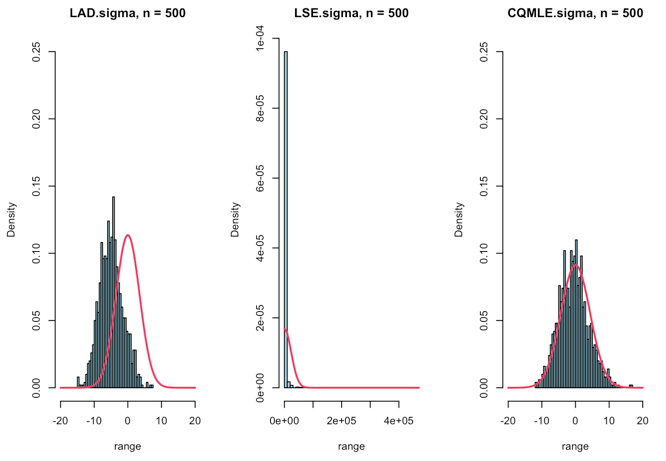}
  \end{minipage}
  \begin{minipage}[b]{0.45\linewidth}
    \centering
    \includegraphics[width=70mm, height=60mm]{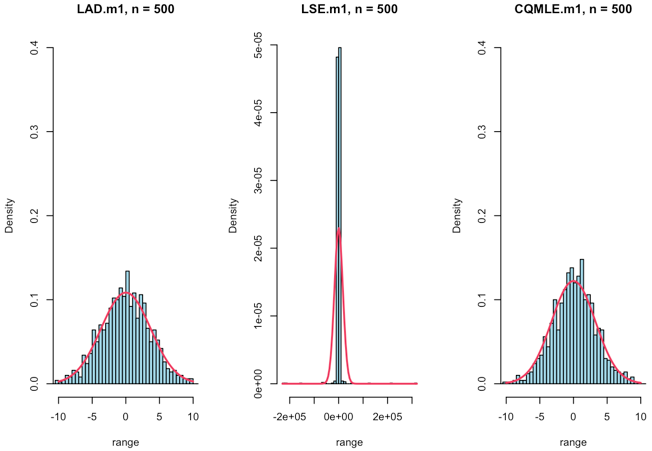}
  \end{minipage}
  \begin{minipage}[b]{0.45\linewidth}
    \centering
    \includegraphics[width=70mm, height=60mm]{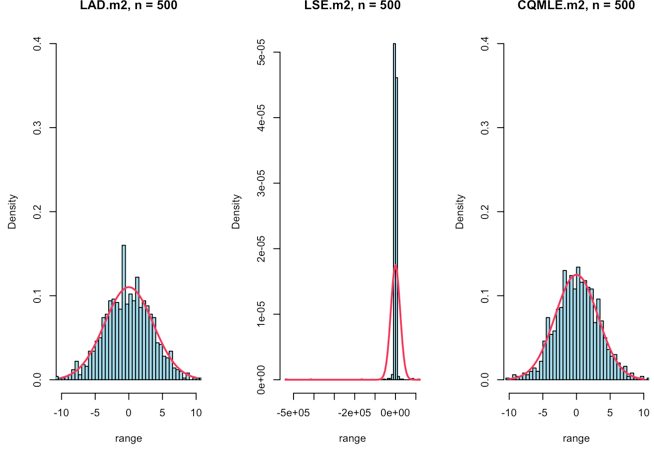}
  \end{minipage}
  \begin{minipage}[b]{0.45\linewidth}
    \centering
    \includegraphics[width=70mm, height=60mm]{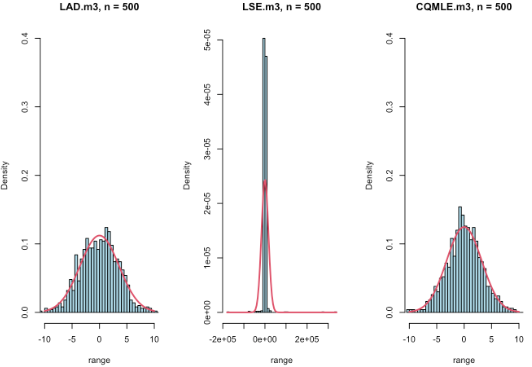}
  \end{minipage}
  \caption{Histograms of the initial estimators for $n=500$ when the true value is $(\al, \be, \sig, \mu)= (0.8, 0.5, 1.5, (5,2,3))$; in each panel, the red line represents the probability density function of the normal distribution with the mean 0 and the variance equal to that of the normalized estimator.
      }  
      \label{hm:fig_hist-08-2}
\end{figure}
\newpage
\begin{figure}[h]
  \begin{minipage}[b]{0.45\linewidth}
    \centering
    \includegraphics[width=70mm, height=60mm]{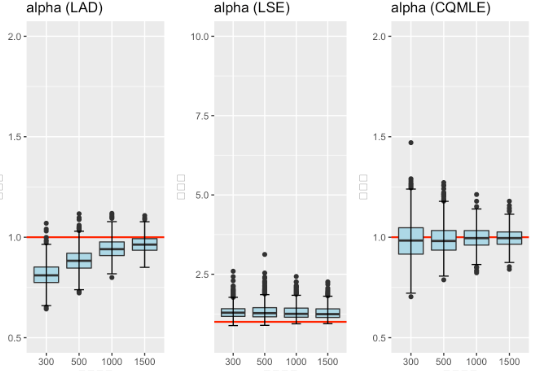}
  \end{minipage}
  \begin{minipage}[b]{0.45\linewidth}
    \centering
    \includegraphics[width=70mm, height=60mm]{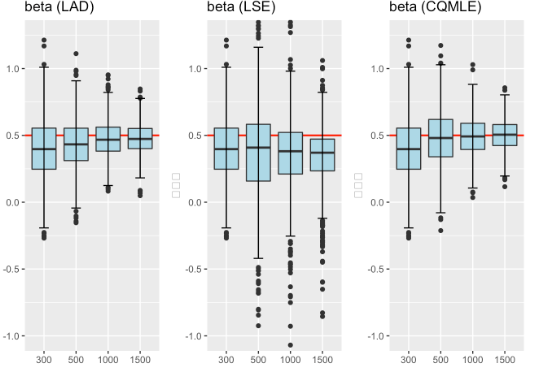}
  \end{minipage}
  \begin{minipage}[b]{0.45\linewidth}
    \centering
    \includegraphics[width=70mm, height=60mm]{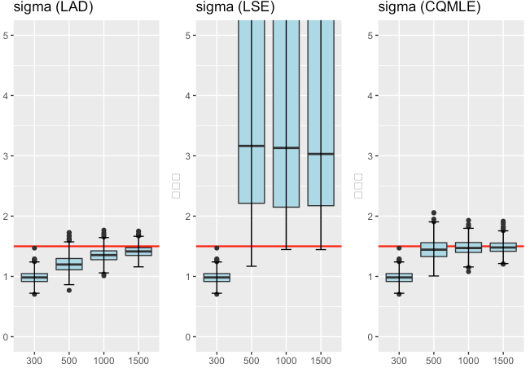}
  \end{minipage}
  \begin{minipage}[b]{0.45\linewidth}
    \centering
    \includegraphics[width=70mm, height=60mm]{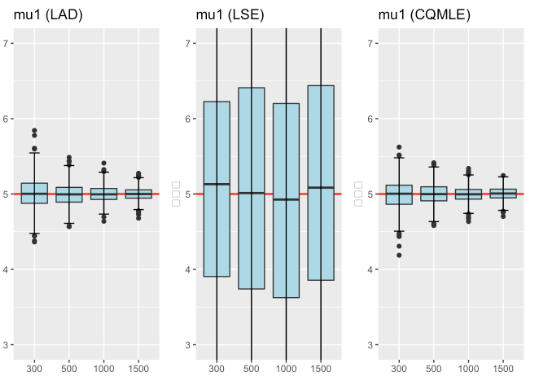}
  \end{minipage}
  \begin{minipage}[b]{0.45\linewidth}
    \centering
    \includegraphics[width=70mm, height=60mm]{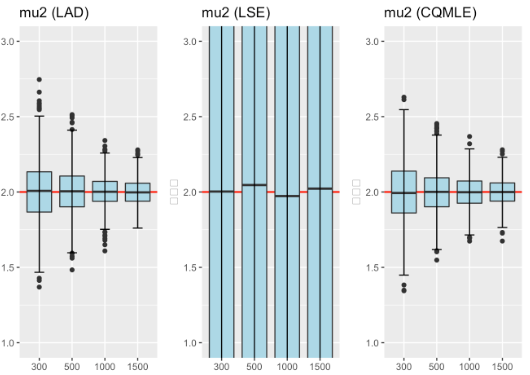}
  \end{minipage}
  \begin{minipage}[b]{0.45\linewidth}
    \centering
    \includegraphics[width=70mm, height=60mm]{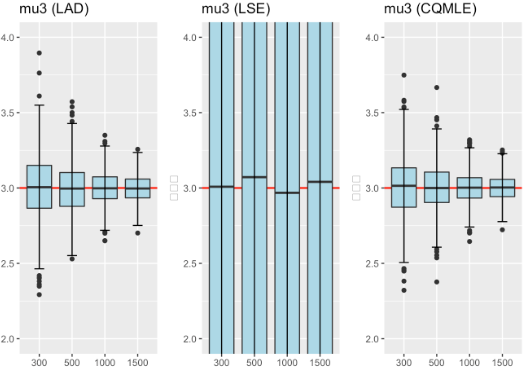}
  \end{minipage}
  \caption{Boxplots of the initial estimators for $n=300,~500,~1000,~1500$ when the true value is $(\al, \be, \sig, \mu)= (1.0, 0.5, 1.5, (5,2,3))$; in each panel, the red line shows the true value.
      }  
      \label{hm:fig_bp-2}
\end{figure}
\newpage
\begin{figure}[h]
  \begin{minipage}[b]{0.45\linewidth}
    \centering
    \includegraphics[width=60mm, height=55mm]{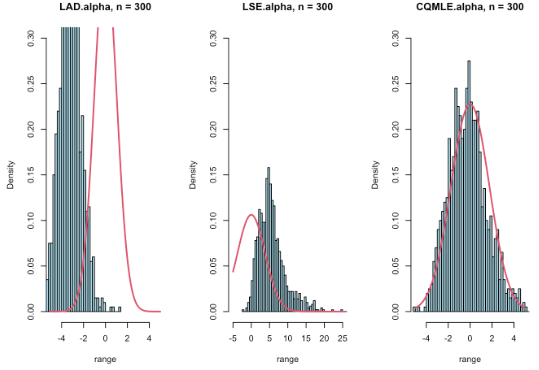}
  \end{minipage}
  \begin{minipage}[b]{0.45\linewidth}
    \centering
    \includegraphics[width=60mm, height=55mm]{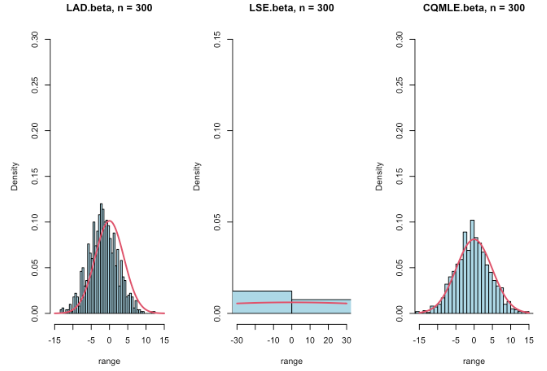}
  \end{minipage}
  \begin{minipage}[b]{0.45\linewidth}
    \centering
    \includegraphics[width=60mm, height=55mm]{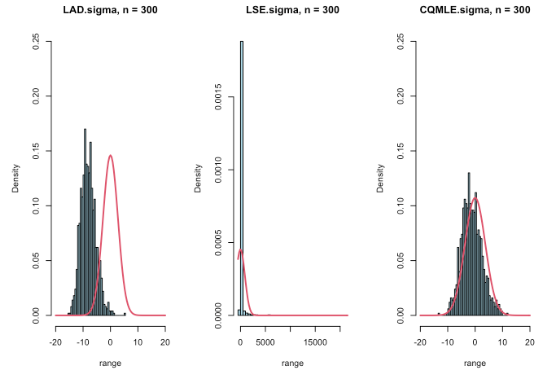}
  \end{minipage}
  \begin{minipage}[b]{0.45\linewidth}
    \centering
    \includegraphics[width=60mm, height=55mm]{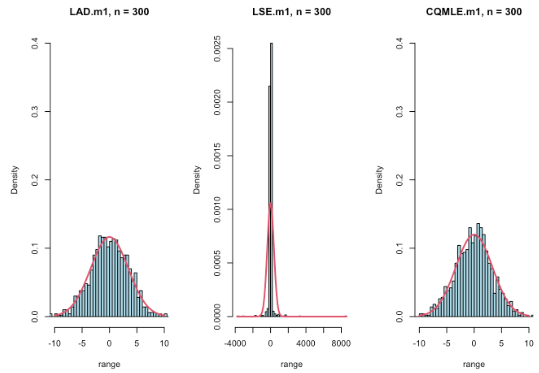}
  \end{minipage}
  \begin{minipage}[b]{0.45\linewidth}
    \centering
    \includegraphics[width=60mm, height=55mm]{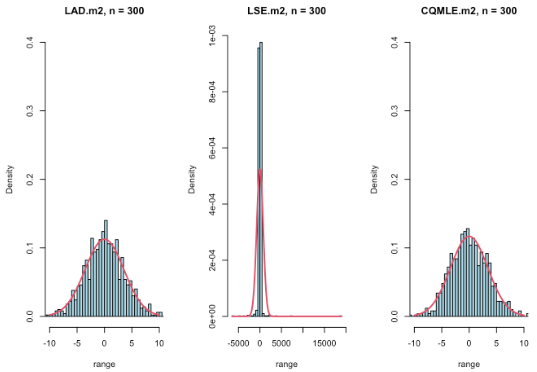}
  \end{minipage}
  \begin{minipage}[b]{0.45\linewidth}
    \centering
    \includegraphics[width=60mm, height=55mm]{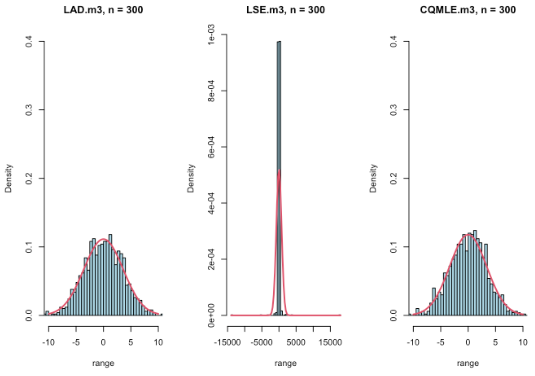}
  \end{minipage}
  \caption{Histograms of the initial estimators for $n=300$ when the true value is $(\al, \be, \sig, \mu)= (1.0, 0.5, 1.5, (5,2,3))$; in each panel, the red line represents the probability density function of the normal distribution with the mean 0 and the variance equal to that of the normalized estimator.
      }  
      \label{hm:fig_hist-1-1}
\end{figure}
\newpage
\begin{figure}[h]
  \begin{minipage}[b]{0.45\linewidth}
    \centering
    \includegraphics[width=60mm, height=60mm]{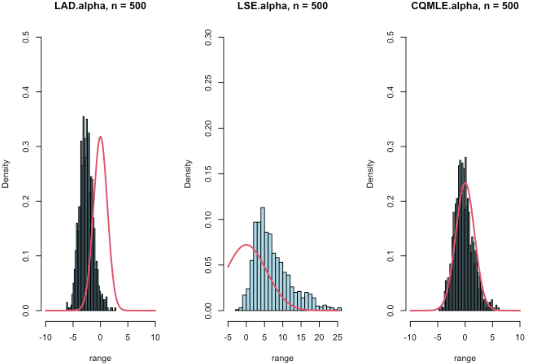}
  \end{minipage}
  \begin{minipage}[b]{0.45\linewidth}
    \centering
    \includegraphics[width=60mm, height=60mm]{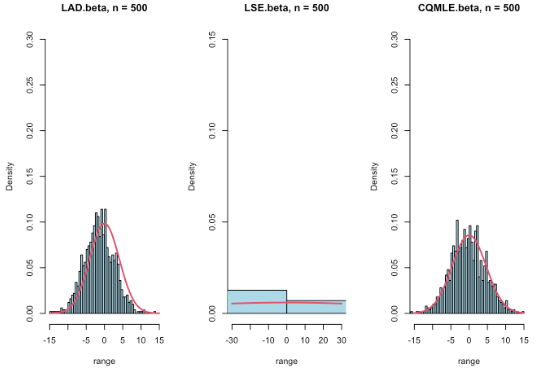}
  \end{minipage}
  \begin{minipage}[b]{0.45\linewidth}
    \centering
    \includegraphics[width=60mm, height=60mm]{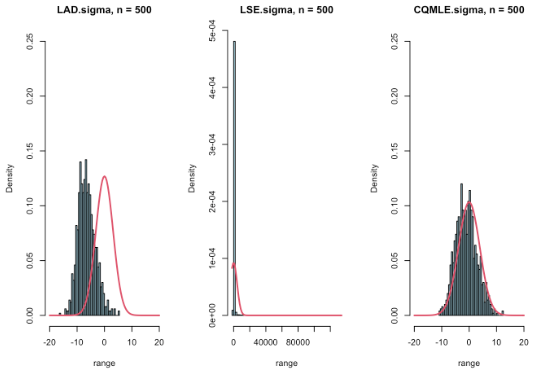}
  \end{minipage}
  \begin{minipage}[b]{0.45\linewidth}
    \centering
    \includegraphics[width=60mm, height=60mm]{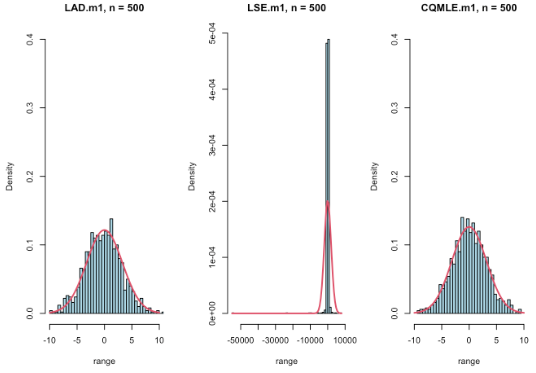}
  \end{minipage}
  \begin{minipage}[b]{0.45\linewidth}
    \centering
    \includegraphics[width=60mm, height=60mm]{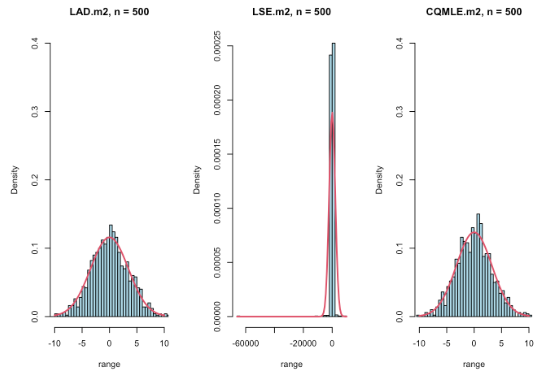}
  \end{minipage}
  \begin{minipage}[b]{0.45\linewidth}
    \centering
    \includegraphics[width=60mm, height=60mm]{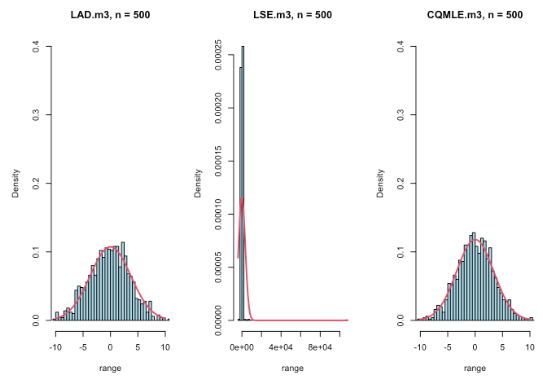}
  \end{minipage}
  \caption{Histograms of the initial estimators for $n=500$ when the true value is $(\al, \be, \sig, \mu)= (1.0, 0.5, 1.5, (5,2,3))$; in each panel, the red line represents the probability density function of the normal distribution with the mean 0 and the variance equal to that of the normalized estimator.
      }  
      \label{hm:fig_hist-1-2}
\end{figure}
\newpage
\begin{figure}[h]
  \begin{minipage}[b]{0.45\linewidth}
    \centering
    \includegraphics[width=70mm, height=60mm]{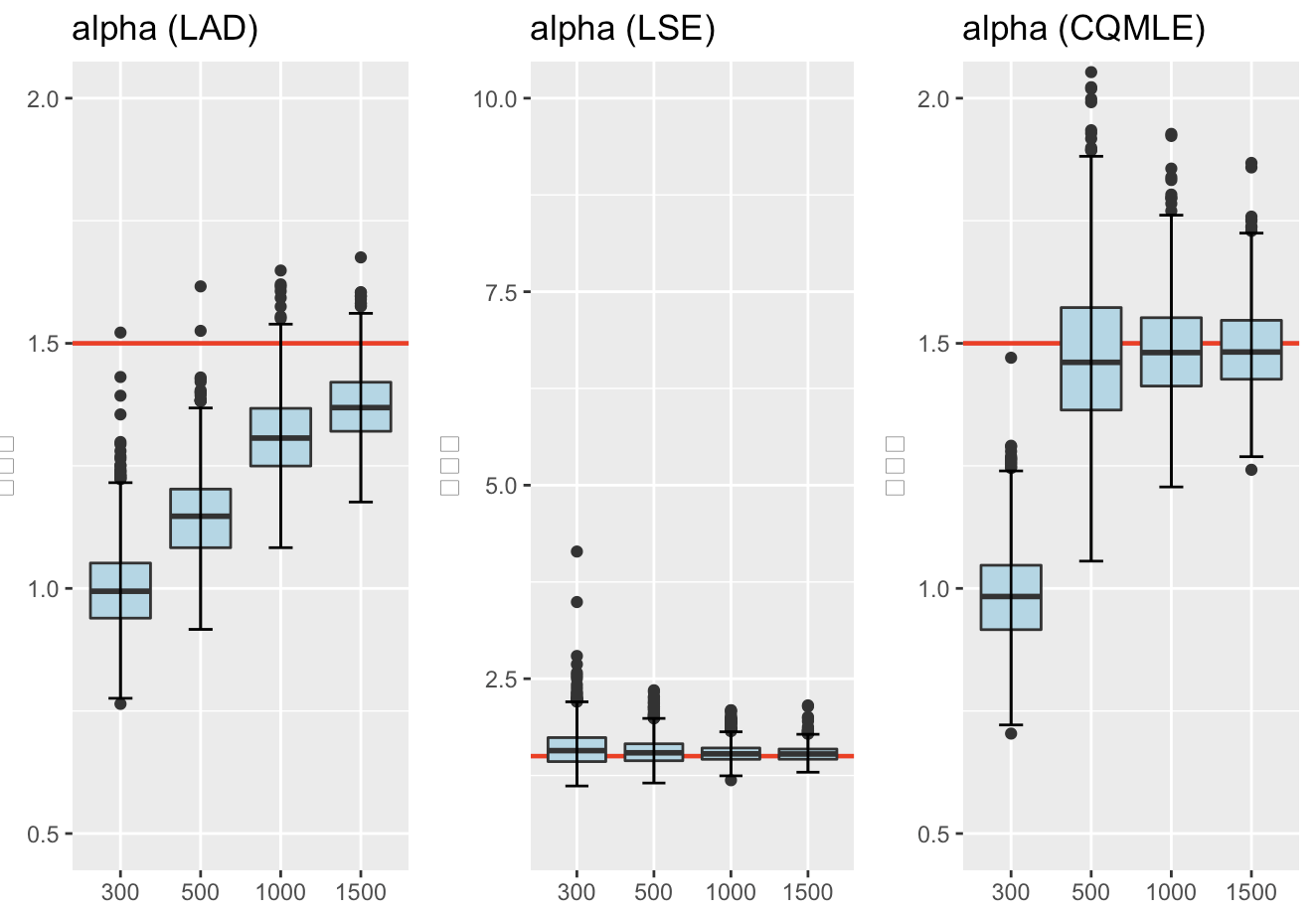}
  \end{minipage}
  \begin{minipage}[b]{0.45\linewidth}
    \centering
    \includegraphics[width=70mm, height=60mm]{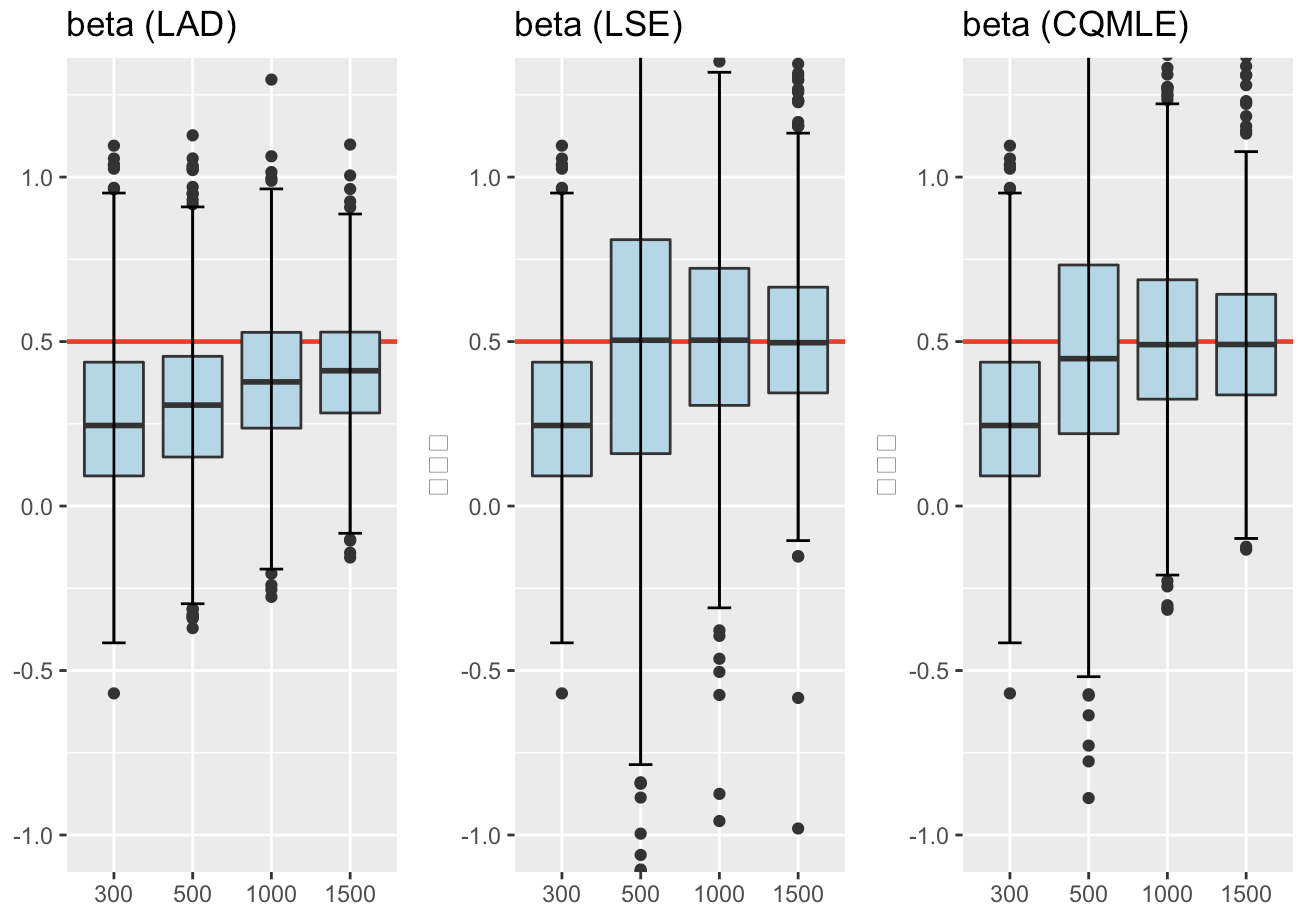}
  \end{minipage}
  \begin{minipage}[b]{0.45\linewidth}
    \centering
    \includegraphics[width=70mm, height=60mm]{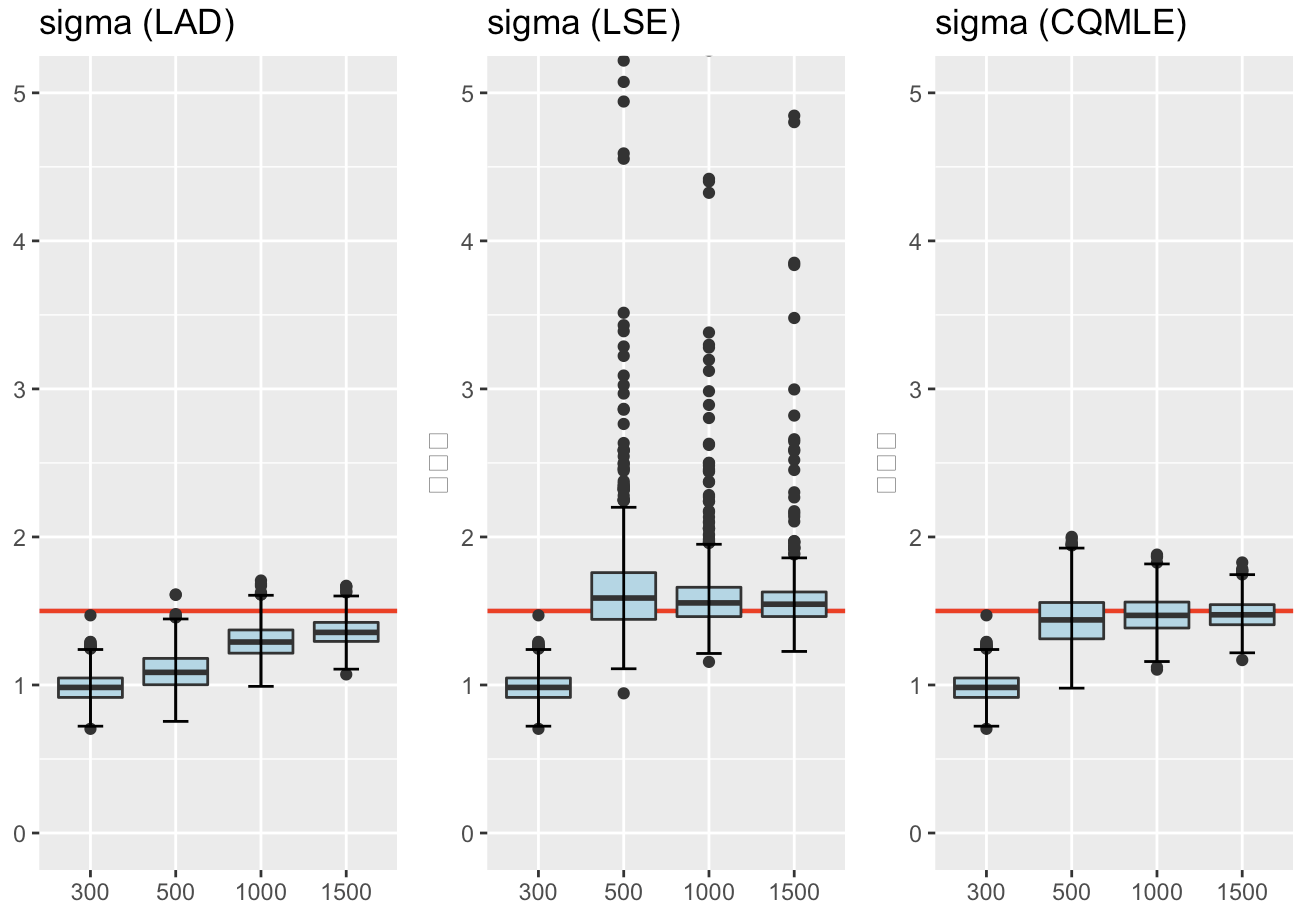}
  \end{minipage}
  \begin{minipage}[b]{0.45\linewidth}
    \centering
    \includegraphics[width=70mm, height=60mm]{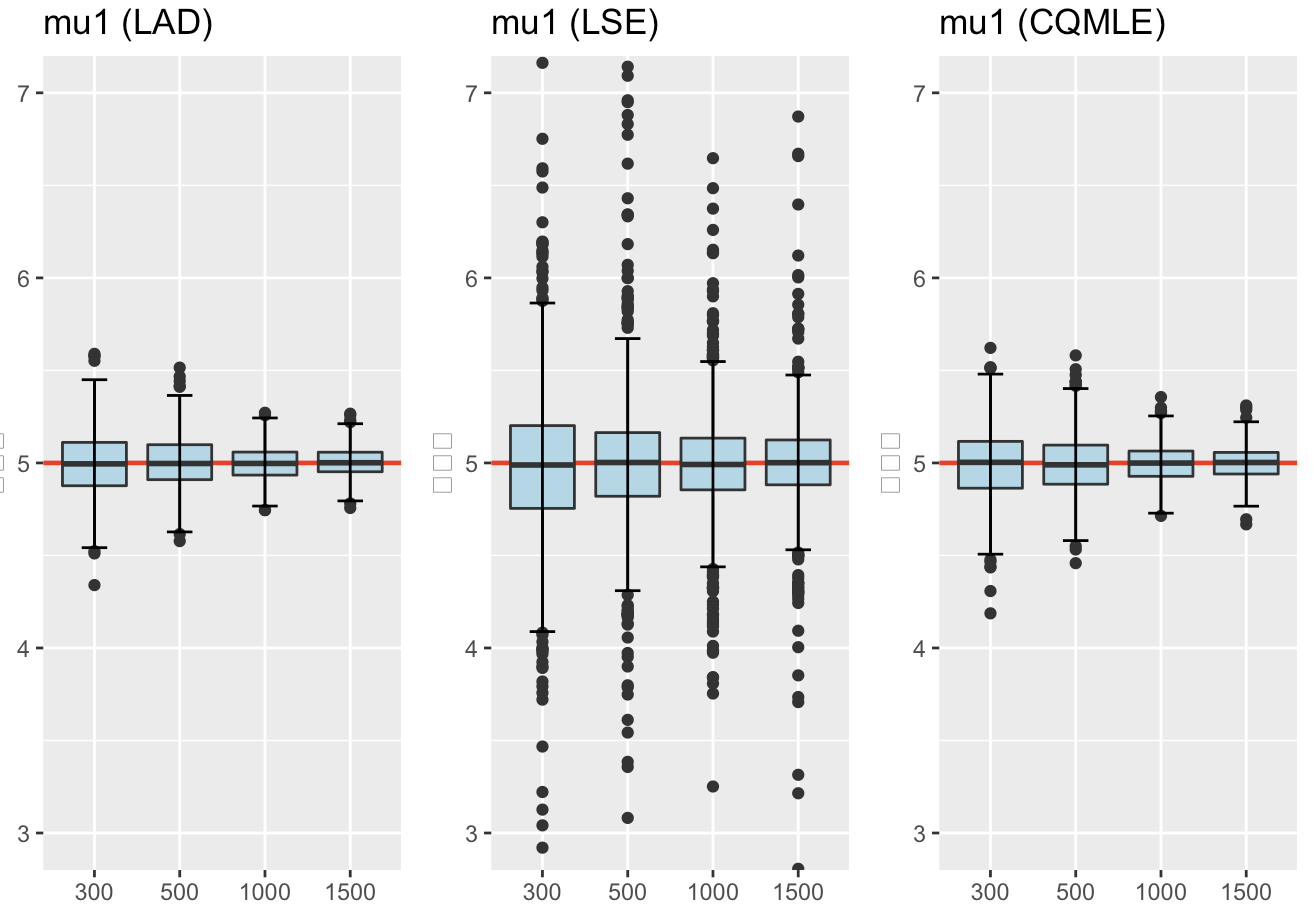}
  \end{minipage}
  \begin{minipage}[b]{0.45\linewidth}
    \centering
    \includegraphics[width=70mm, height=60mm]{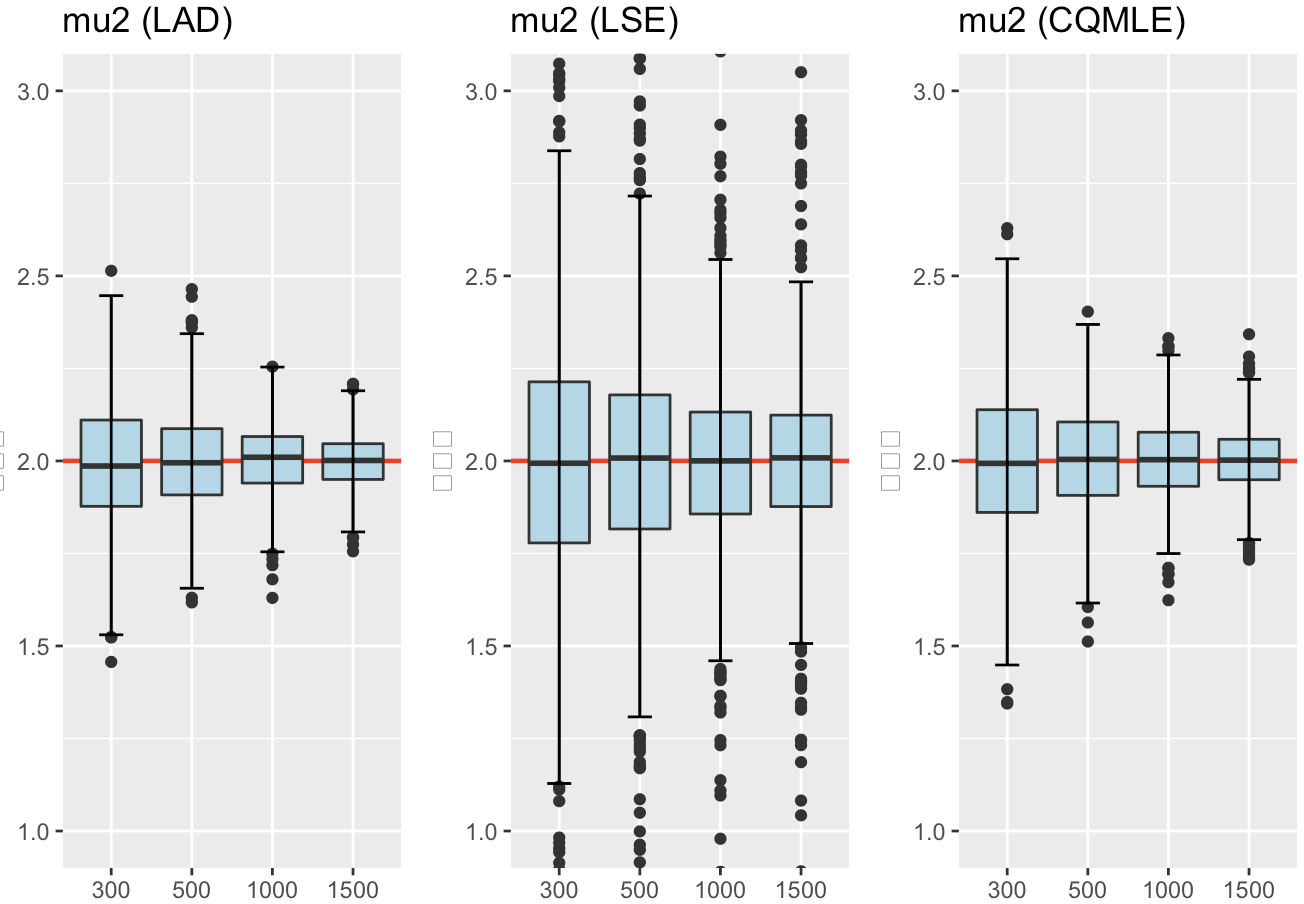}
  \end{minipage}
  \begin{minipage}[b]{0.45\linewidth}
    \centering
    \includegraphics[width=70mm, height=60mm]{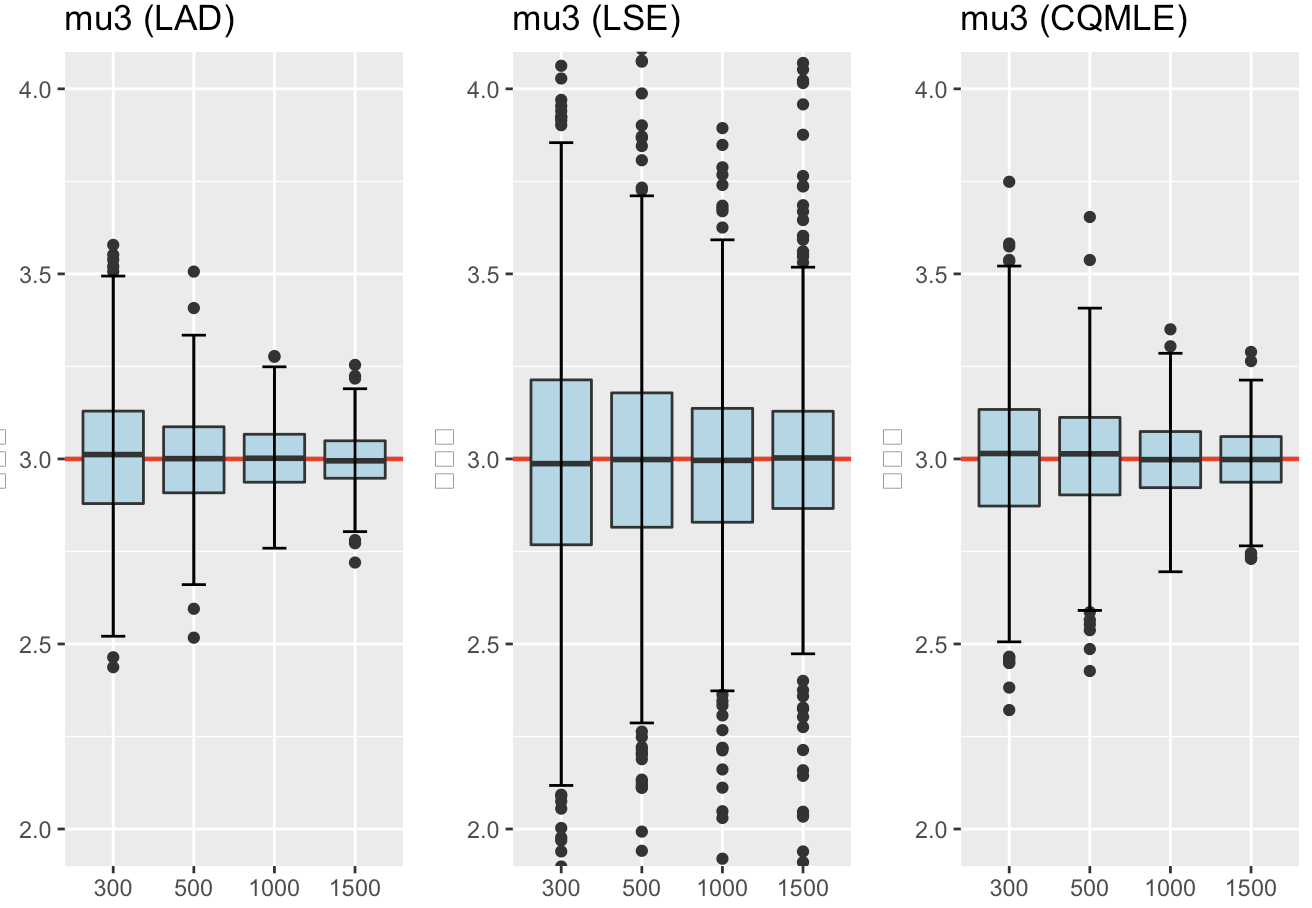}
  \end{minipage}
  \caption{Boxplots of the initial estimators for $n=300,~500,~1000,~1500$ when the true value is $(\al, \be, \sig, \mu)= (1.5, 0.5, 1.5, (5,2,3))$; in each panel, the red line shows the true value.
      }  
      \label{hm:fig_bp-3}
\end{figure}
\newpage
\begin{figure}[h]
  \begin{minipage}[b]{0.45\linewidth}
    \centering
    \includegraphics[width=60mm, height=60mm]{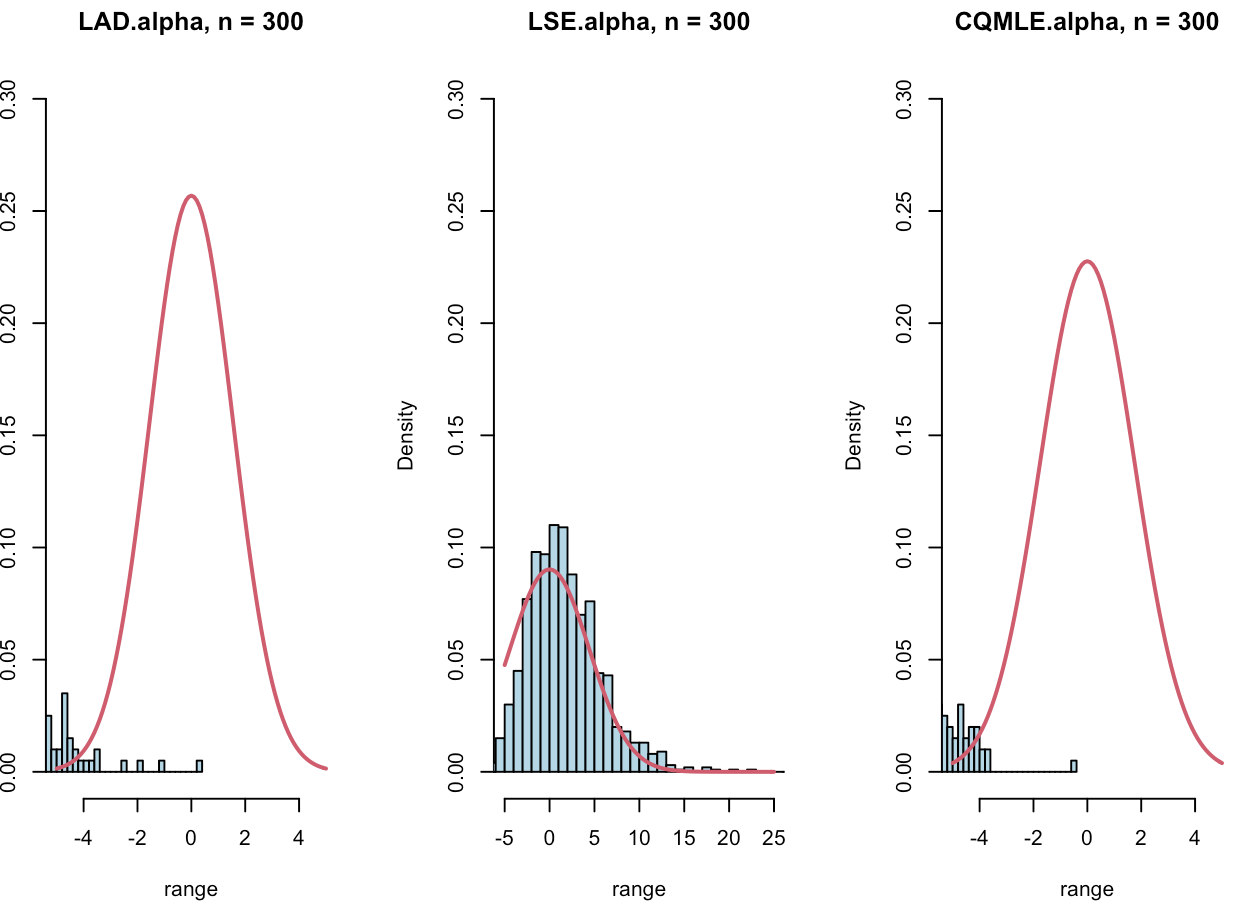}
  \end{minipage}
  \begin{minipage}[b]{0.45\linewidth}
    \centering
    \includegraphics[width=60mm, height=60mm]{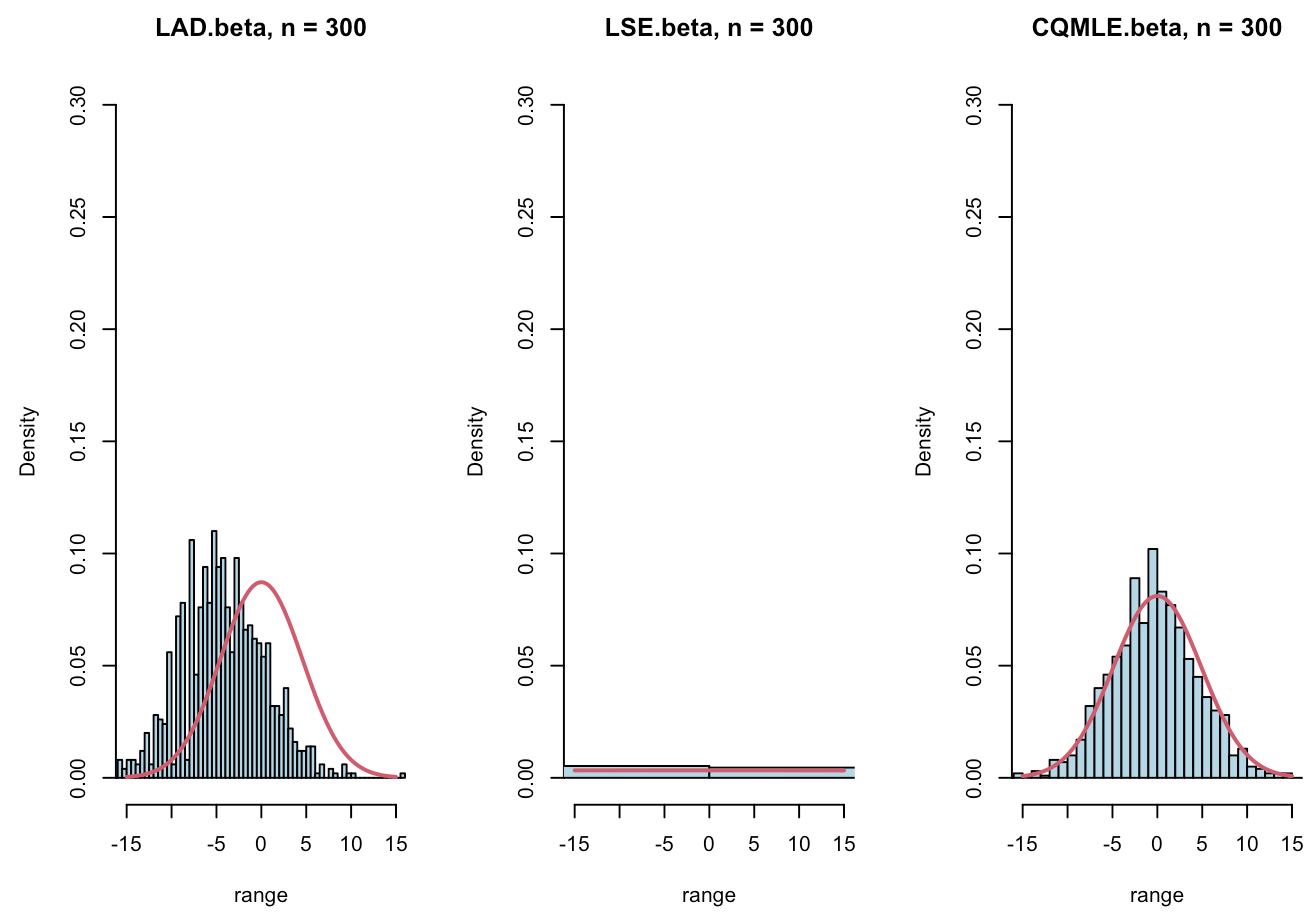}
  \end{minipage}
  \begin{minipage}[b]{0.45\linewidth}
    \centering
    \includegraphics[width=60mm, height=60mm]{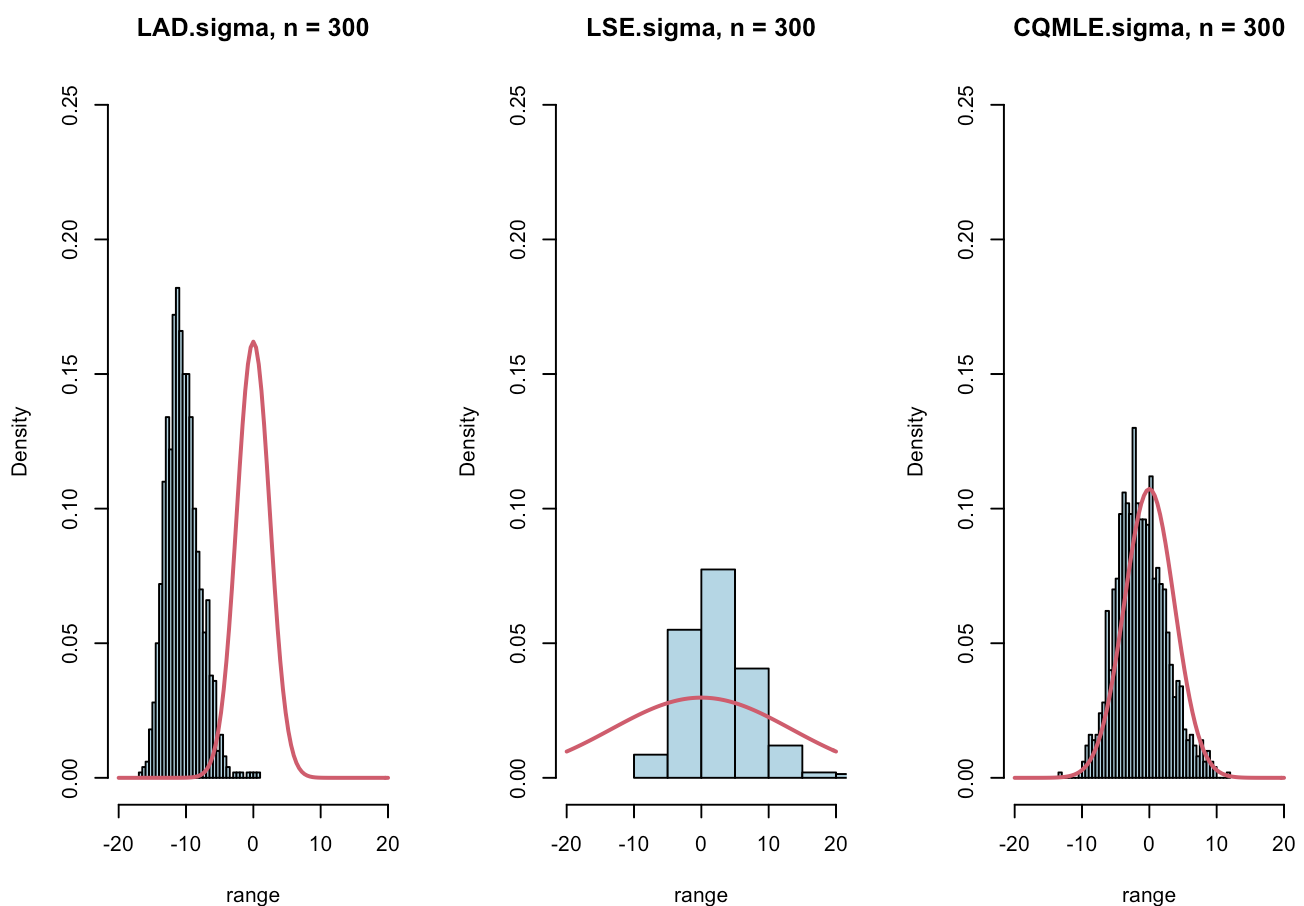}
  \end{minipage}
  \begin{minipage}[b]{0.45\linewidth}
    \centering
    \includegraphics[width=60mm, height=60mm]{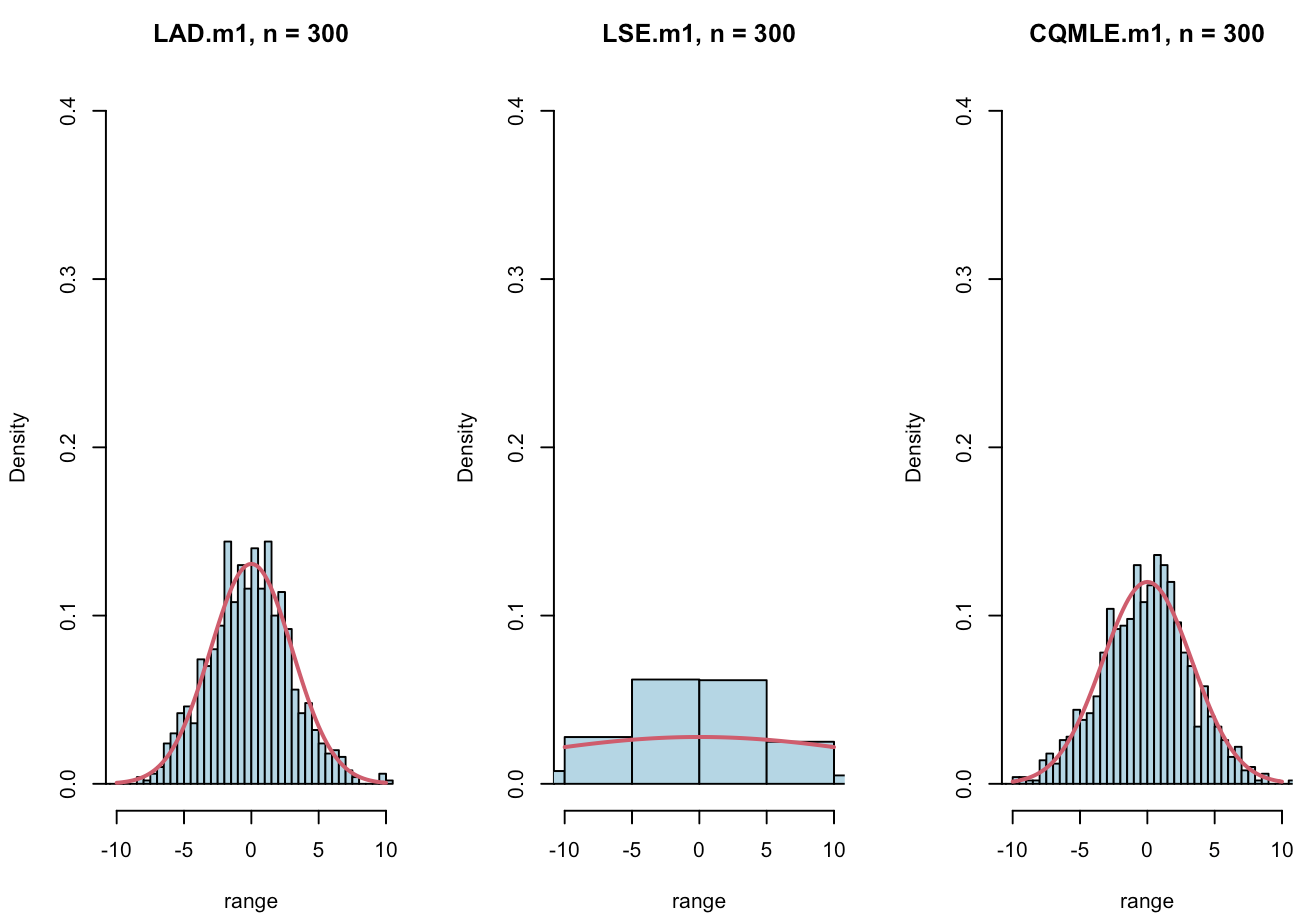}
  \end{minipage}
  \begin{minipage}[b]{0.45\linewidth}
    \centering
    \includegraphics[width=60mm, height=60mm]{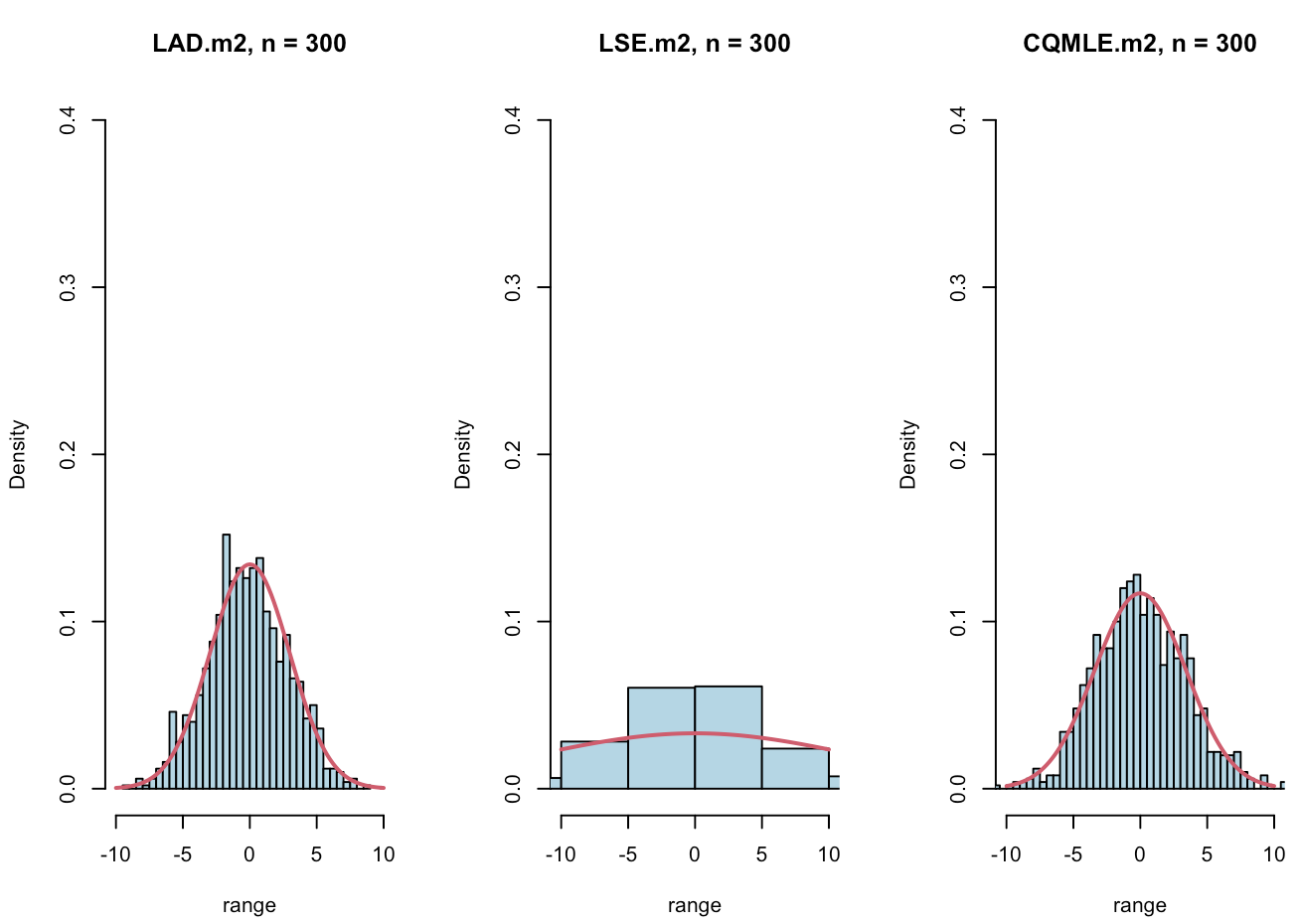}
  \end{minipage}
  \begin{minipage}[b]{0.45\linewidth}
    \centering
    \includegraphics[width=60mm, height=60mm]{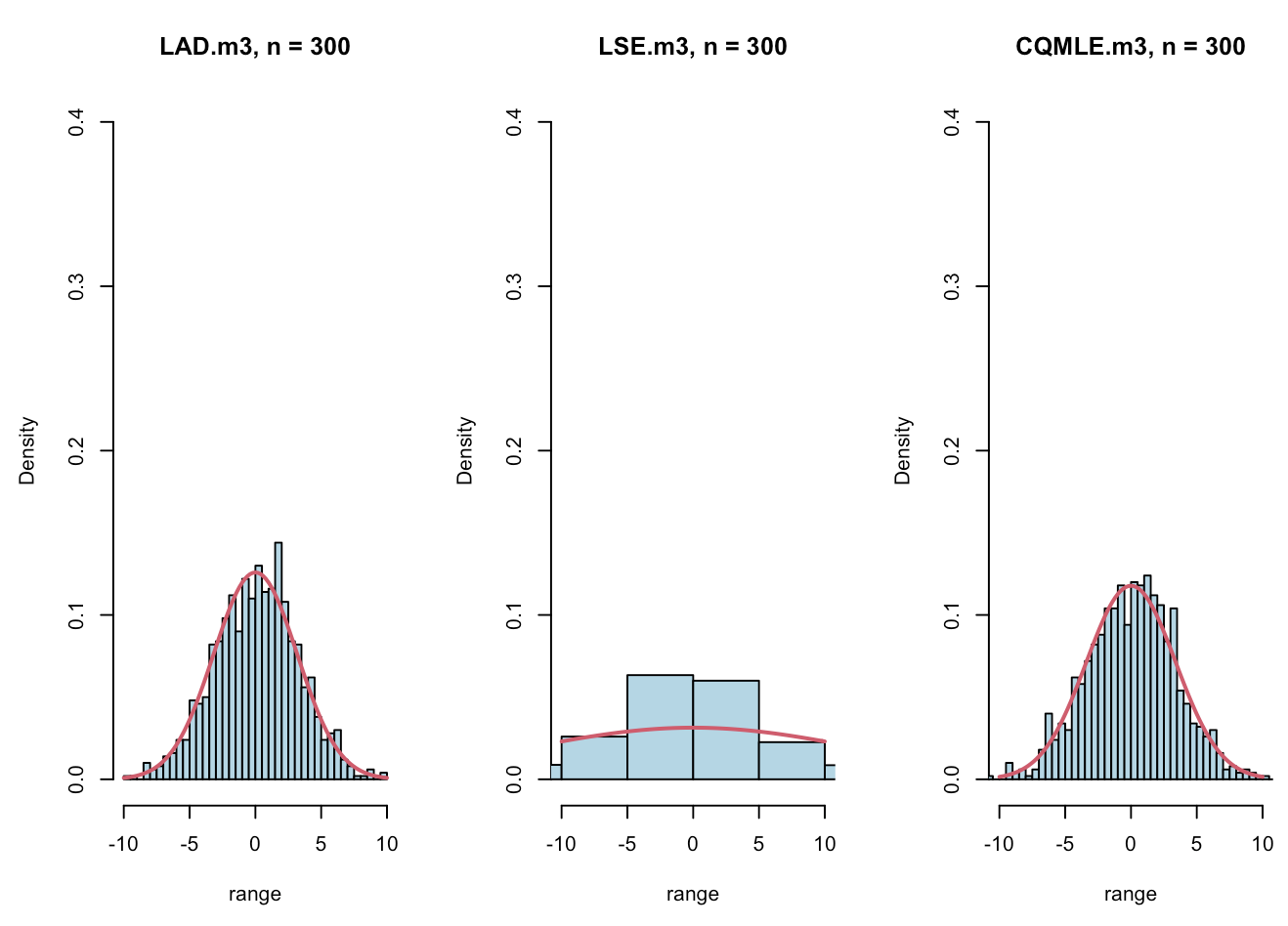}
  \end{minipage}
  \caption{Histograms of the initial estimators for $n=300$ when the true value is $(\al, \be, \sig, \mu)= (1.5, 0.5, 1.5, (5,2,3))$; in each panel, the red line represents the probability density function of the normal distribution with the mean 0 and the variance equal to that of the normalized estimator.
      }  
      \label{hm:fig_hist-15-1}
\end{figure}
\newpage
\begin{figure}[h]
  \begin{minipage}[b]{0.45\linewidth}
    \centering
    \includegraphics[width=60mm, height=60mm]{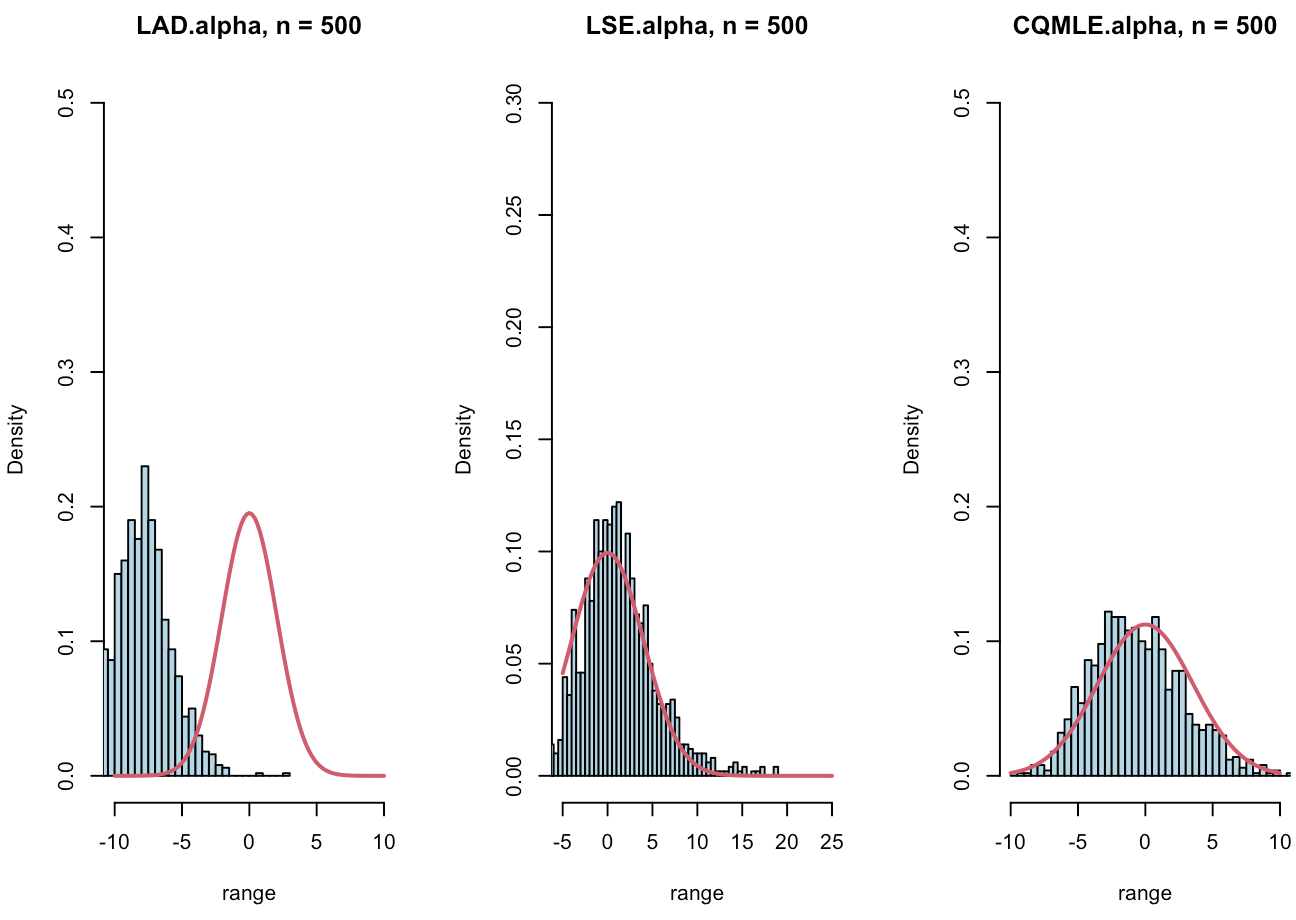}
  \end{minipage}
  \begin{minipage}[b]{0.45\linewidth}
    \centering
    \includegraphics[width=60mm, height=60mm]{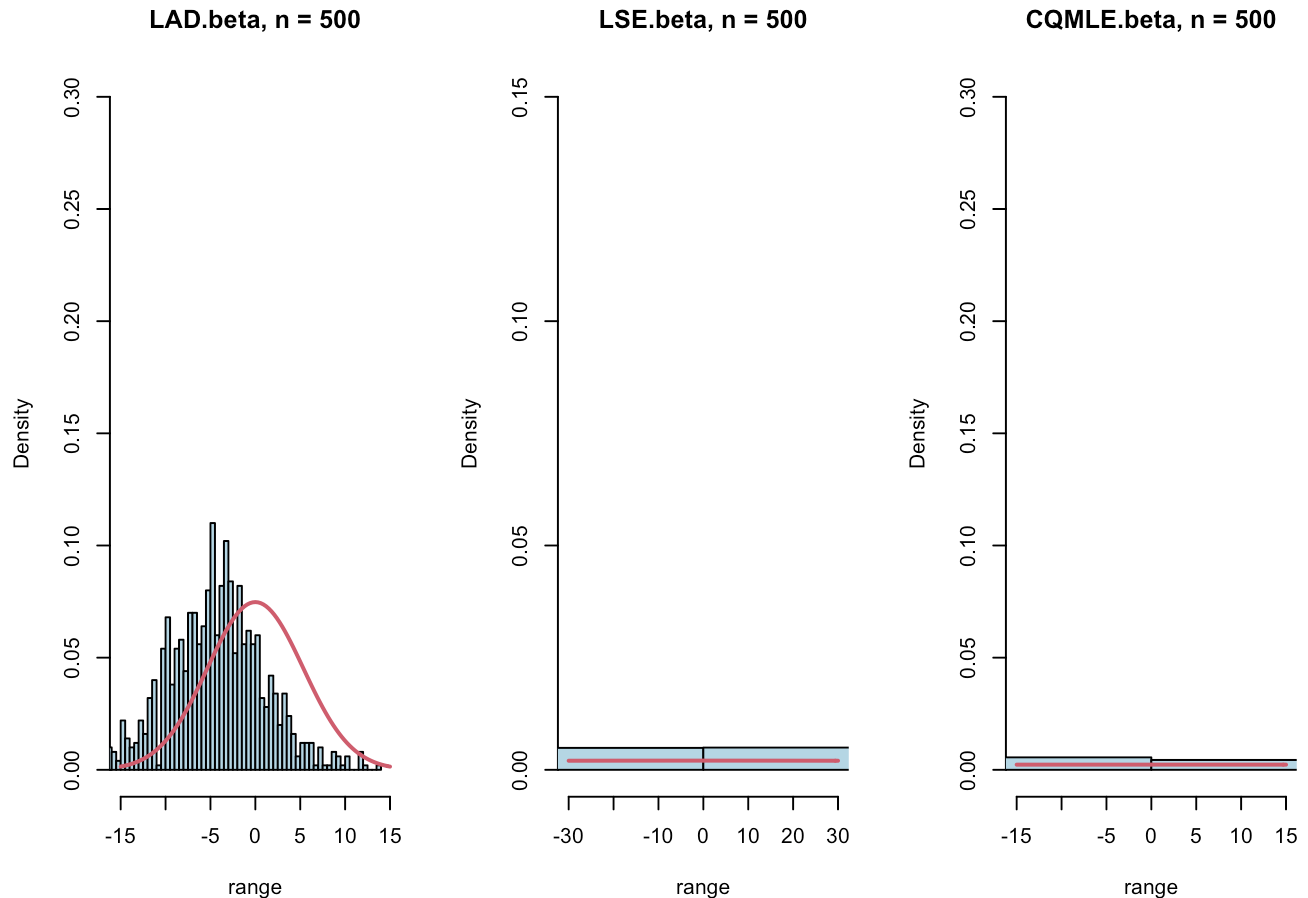}
  \end{minipage}
  \begin{minipage}[b]{0.45\linewidth}
    \centering
    \includegraphics[width=60mm, height=60mm]{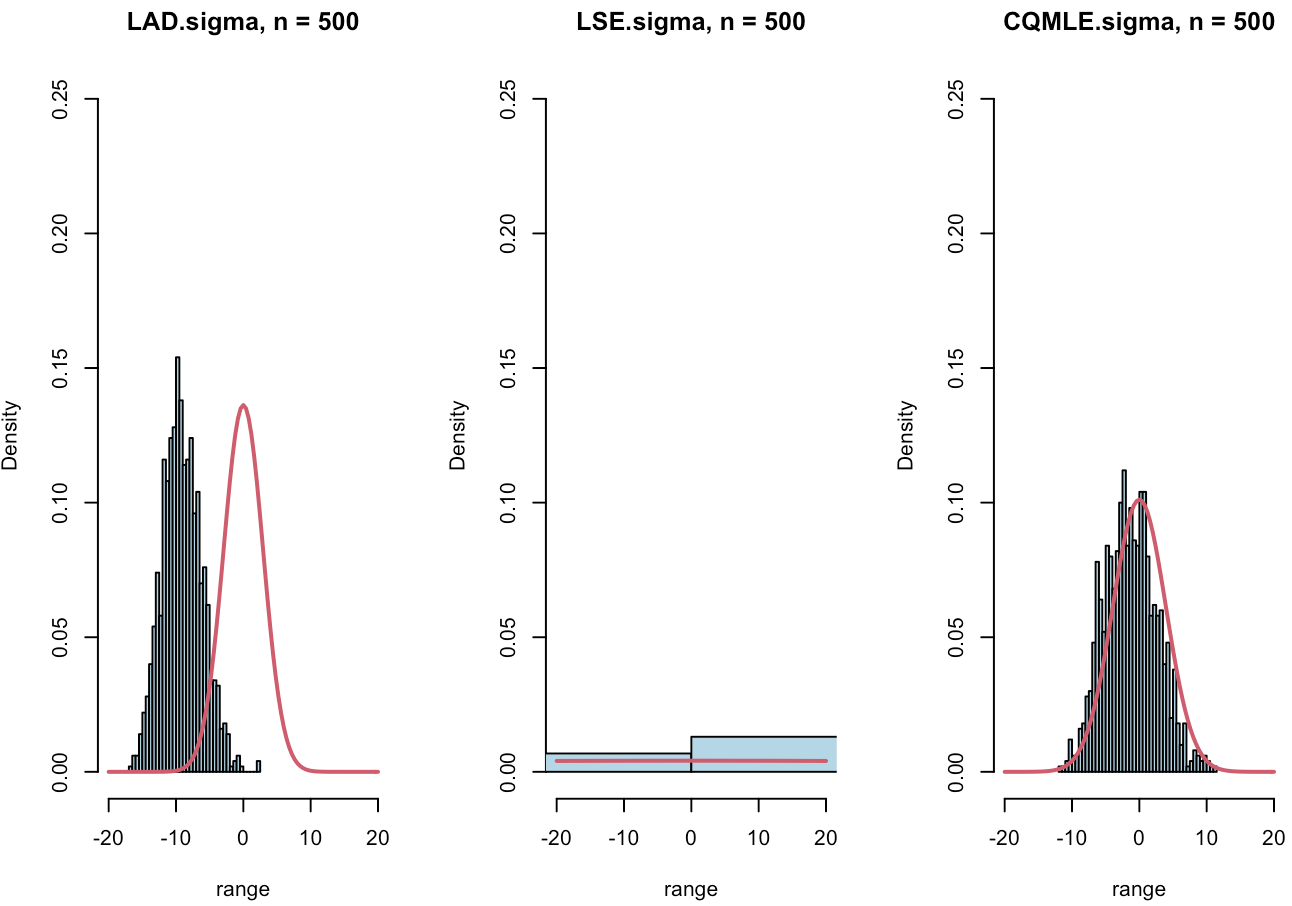}
  \end{minipage}
  \begin{minipage}[b]{0.45\linewidth}
    \centering
    \includegraphics[width=60mm, height=60mm]{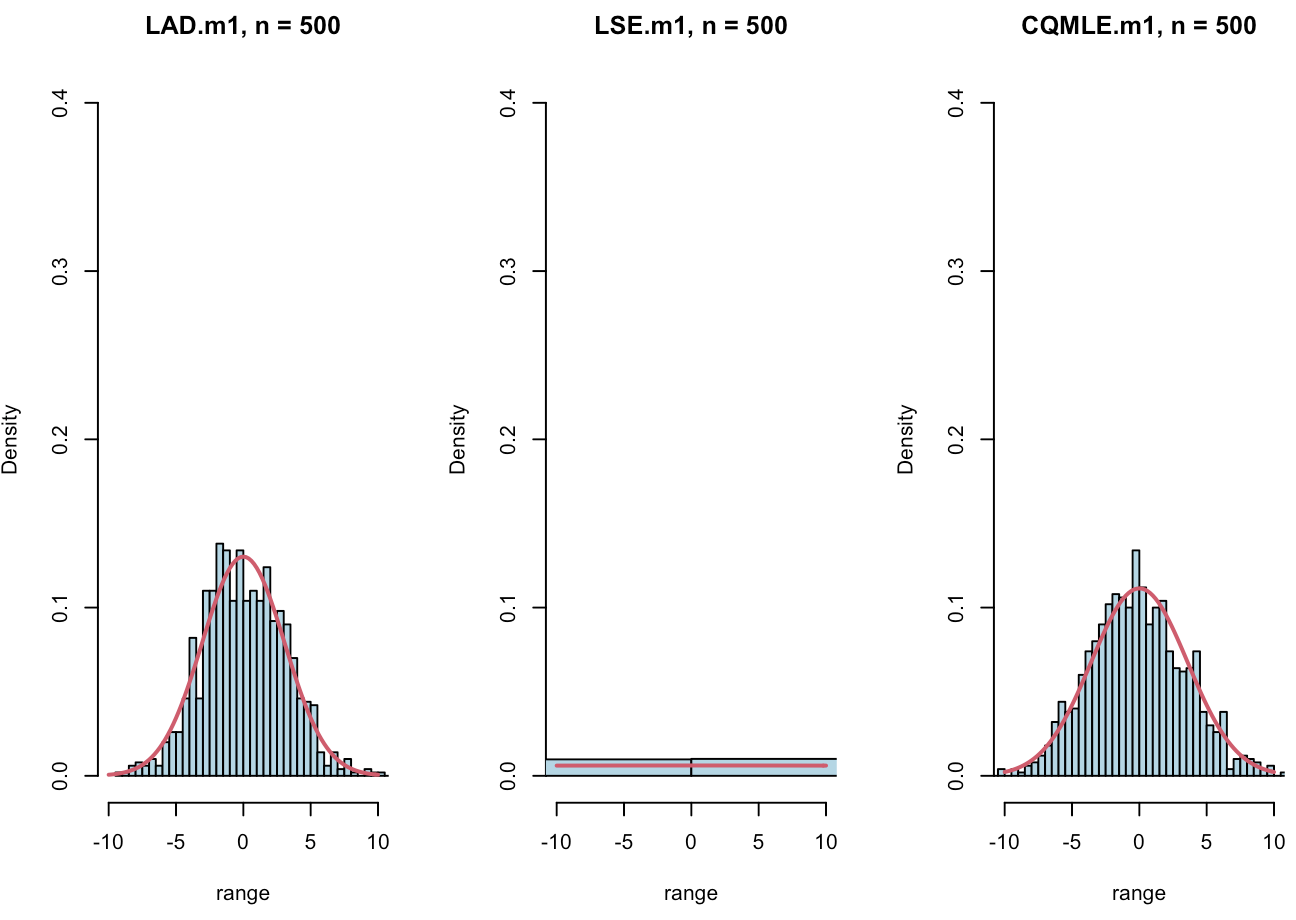}
  \end{minipage}
  \begin{minipage}[b]{0.45\linewidth}
    \centering
    \includegraphics[width=60mm, height=60mm]{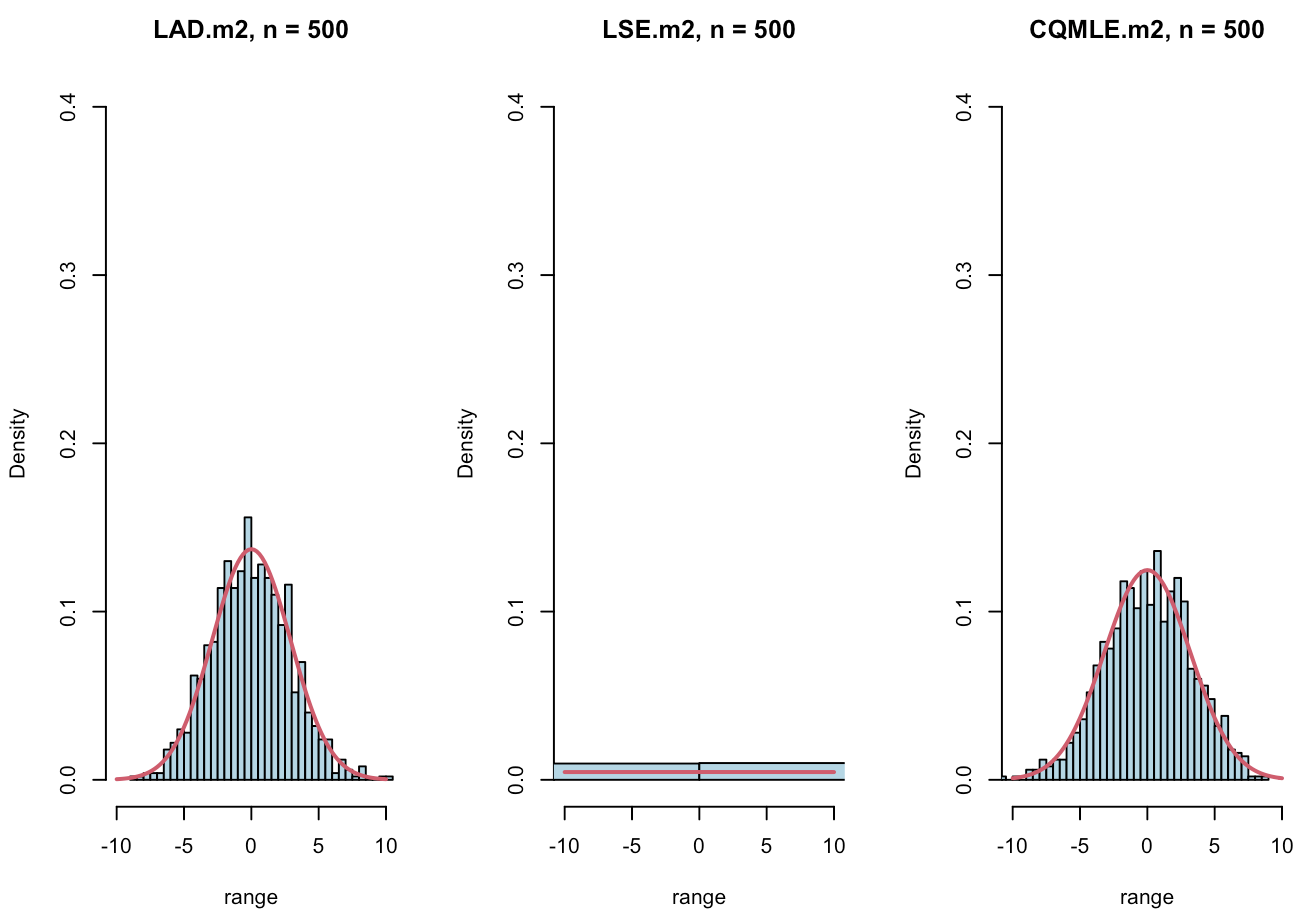}
  \end{minipage}
  \begin{minipage}[b]{0.45\linewidth}
    \centering
    \includegraphics[width=60mm, height=60mm]{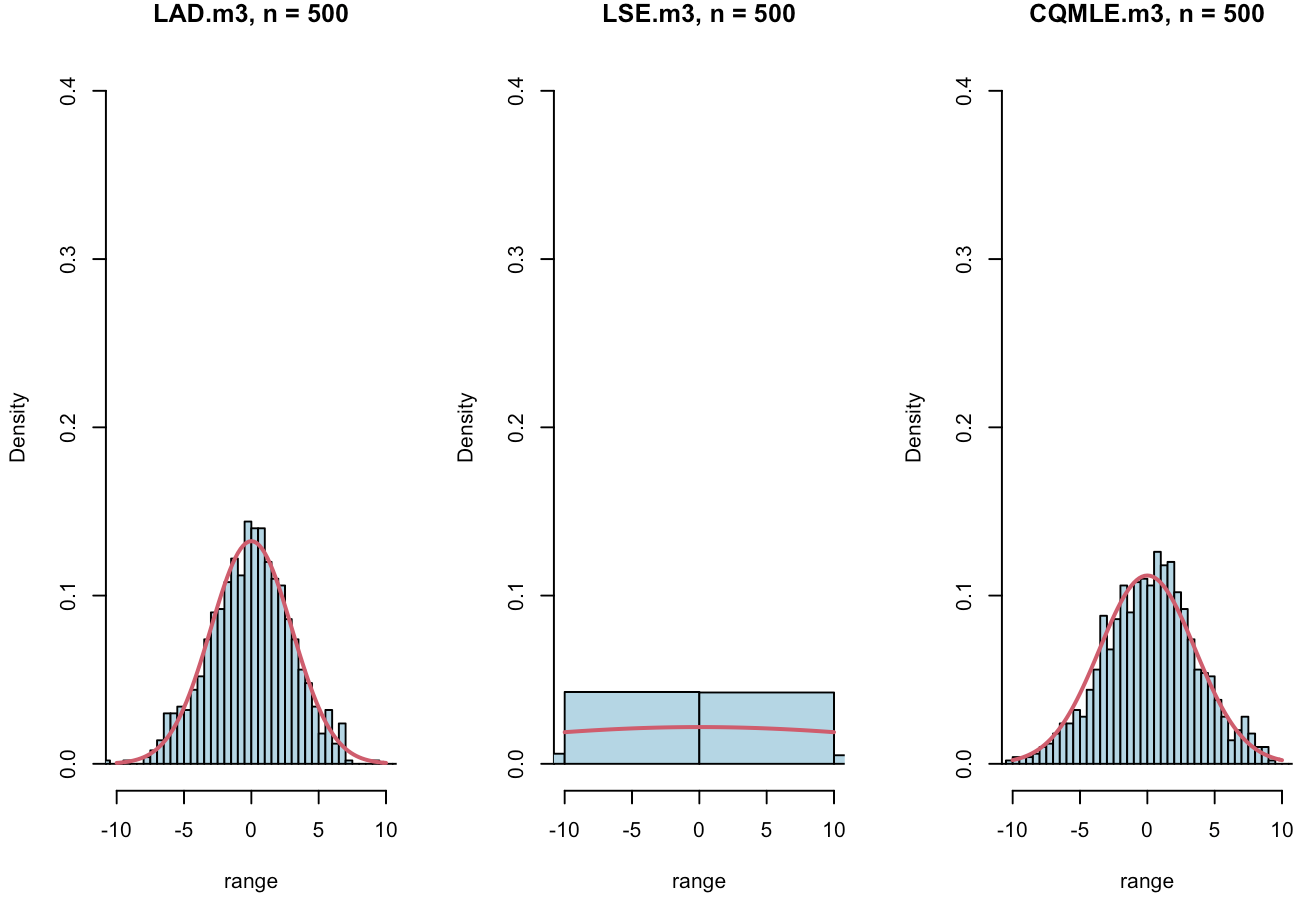}
  \end{minipage}
  \caption{Histograms of the initial estimators for $n=500$ when the true value is $(\al, \be, \sig, \mu)= (1.5, 0.5, 1.5, (5,2,3))$; in each panel, the red line represents the probability density function of the normal distribution with the mean 0 and the variance equal to that of the normalized estimator.
      }  
  \label{hm:fig_hist-15-2}
\end{figure}

\subsection{MLE with CQMLE-based initial value}\label{hm:sec_sim.MLE}

\begin{figure}[h]
  \begin{minipage}[b]{0.85\linewidth}
    \centering
    \includegraphics[
    scale=0.5
    ]{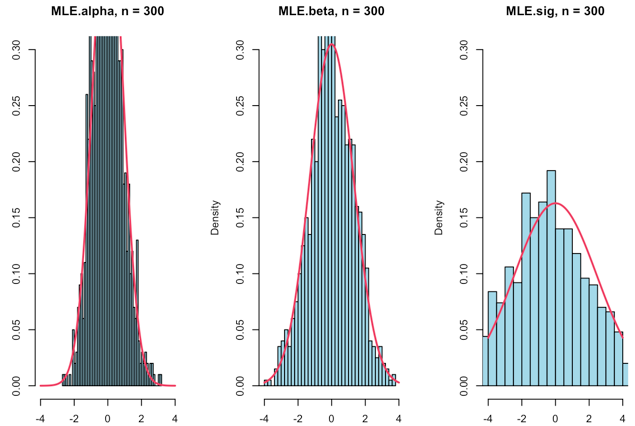}
  \end{minipage}
  \begin{minipage}[b]{0.85\linewidth}
    \centering
    \includegraphics[
    scale=0.5
    ]{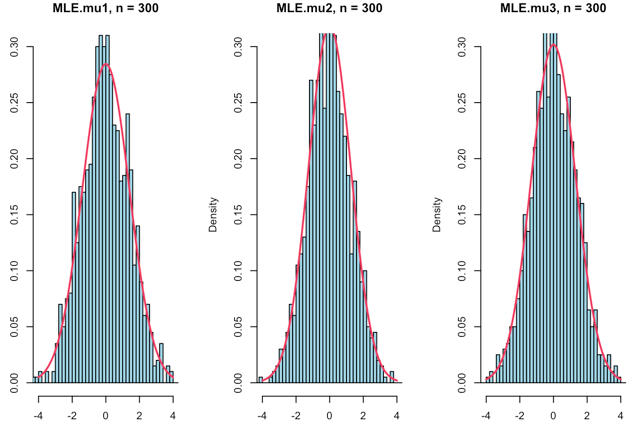}
  \end{minipage}
\caption{Histograms of the MLE for $n=300$ when we use the true value $(\al,\be, \sig, \mu)=(0.8,0.5,1.0,(5,2,3))$. \rev{
For the starting value for the numerical optimization of the log-likelihood, we used the CQMLE of $\mu$ and the moment estimator of $(\al,\be,\sig)$
}.}
\label{hm:fig_mle-1}
\end{figure}
\begin{figure}[h]
\begin{minipage}[b]{0.85\linewidth}
    \centering
    \includegraphics[
    scale=0.5
    ]{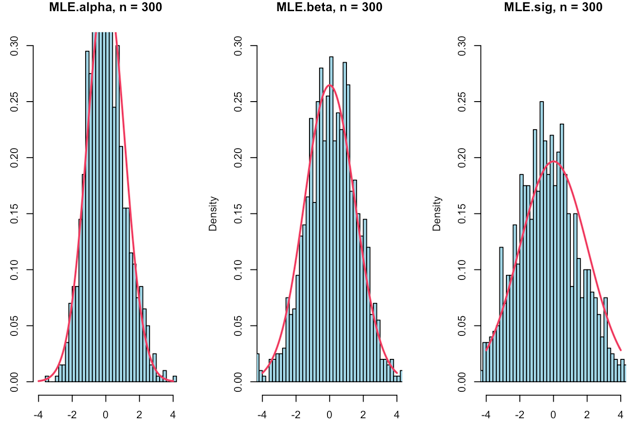}
  \end{minipage}
  \begin{minipage}[b]{0.85\linewidth}
    \centering
    \includegraphics[
    scale=0.5
    ]{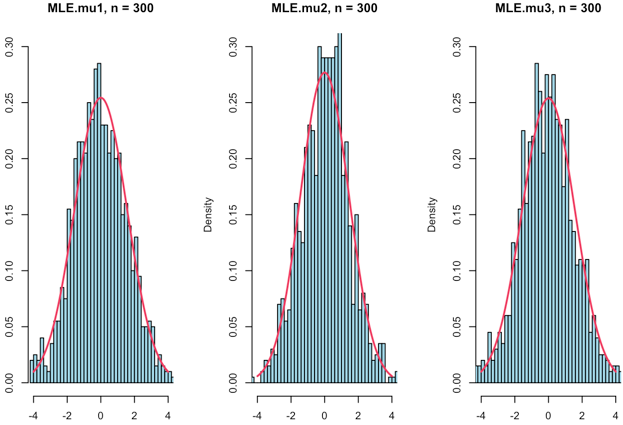}
  \end{minipage}
\caption{Histograms of the MLE for $n=300$ when we use the true value $(\al,\be, \sig, \mu)=(1.0,0.5,1.0,(5,2,3))$. \rev{
For the starting value for the numerical optimization of the log-likelihood, we used the CQMLE of $\mu$ and the moment estimator of $(\al,\be,\sig)$
}.}
\label{hm:fig_mle-2}
\end{figure}

\newpage

\begin{figure}[h]
\begin{minipage}[b]{0.85\linewidth}
    \centering
    \includegraphics[
    scale=0.5
    ]{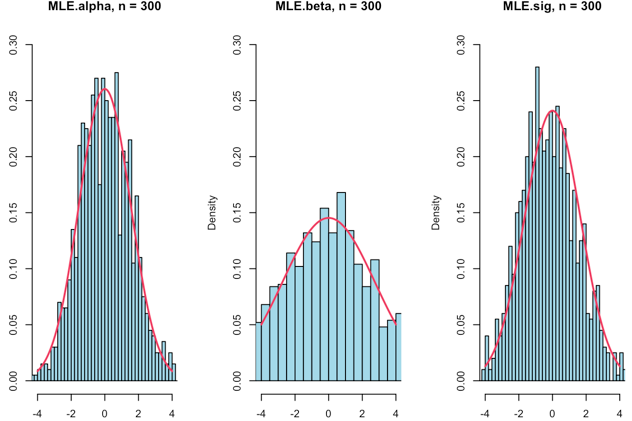}
  \end{minipage}
  \begin{minipage}[b]{0.85\linewidth}
    \centering
    \includegraphics[
    scale=0.5
    ]{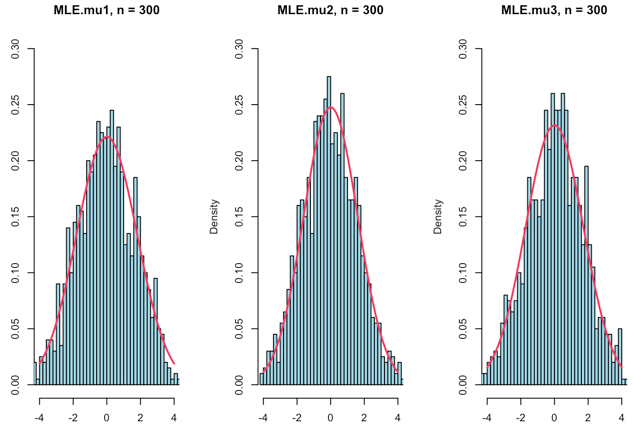}
  \end{minipage}
\caption{Histograms of the MLE for $n=300$ when we use the true value $(\al,\be, \sig, \mu)=(1.5,0.5,1.0,(5,2,3))$. \rev{
For the starting value for the numerical optimization of the log-likelihood, we used the CQMLE of $\mu$ and the moment estimator of $(\al,\be,\sig)$
}.}
\label{hm:fig_mle-3}
\end{figure}

\tcr{
We show the results in Figures \ref{hm:fig_mle-1},\ref{hm:fig_mle-2}, and \ref{hm:fig_mle-3}.
}
Histograms show that the MLE with the CQMLE as an initial estimate for numerical search has good performance for each true value, in particular free from the local optimum problem.

\tcr{
We additionally conducted simulations under the parameter settings $\alpha =0.8, 1.0, 1.5$, $\beta = -0.5$, $\sigma = 1.5$, and $\mu = (5, 2, 3)$ with sample sizes $n = 300$ and $n = 500$, and computed the MLEs with CQMLE-based initial value. 
Since the results under these parameter settings were very similar to the previous ones, we show them in Tables \ref{tab:0.8_mean_estimates_sd}, \ref{tab:1.0_mean_estimates_sd}, and \ref{tab:1.5_mean_estimates_sd} without histograms and boxplots.
}

\begin{table}[ht]
\centering
\begin{tabular}{c|c|c|c|c|c|c}
  \hline
$n$ & $\aes$ & $\bes$ & $\ses$ & $\hat{\mu}_1$ & $\hat{\mu}_2$ & $\hat{\mu}_3$ \\
  \hline
  300 & 0.7970 & -0.4988 & 1.4864 & 4.9988 & 2.0032 & 2.9997 \\
 & (0.0480) & (0.0775) & (0.1394) & (0.0793) & (0.0716) & (0.0716) \\
500 & 0.7991 & -0.5008 & 1.4925 & 5.0016 & 1.9996 & 2.9982 \\
 & (0.0381) & (0.0579) & (0.1059) & (0.0556) & (0.0581) & (0.0581) \\
  \hline
\end{tabular} 
\caption{Means and standard deviations of the estimates for the true values: $\alpha = 0.8$, $\beta = -0.5$, $\sigma = 1.5$, $\mu = (5, 2, 3)$.}
\label{tab:0.8_mean_estimates_sd}
\end{table}


\begin{table}[ht]
\centering
\begin{tabular}{c|c|c|c|c|c|c}
  \hline
  $n$ & $\hat{\alpha}$ & $\hat{\beta}$ & $\hat{\sigma}$ & $\hat{\mu}_1$ & $\hat{\mu}_2$ & $\hat{\mu}_3$ \\
  \hline
  300 & 0.99991 & -0.49973 & 1.4890 & 5.0022 & 1.997 & 3.0011 \\
   & (0.0651) & (0.0881) & (0.1186) & (0.0933) & (0.0855) & (0.0855) \\
  500 & 1.00094 & -0.50143 & 1.4899 & 5.0000 & 2.001 & 3.0000 \\
   & (0.0496) & (0.0713) & (0.0903) & (0.0660) & (0.0672) & (0.0672) \\
  \hline
\end{tabular}
\caption{Means and standard deviations of the estimates for the true values: $\alpha = 1.5$, $\beta = -0.5$, $\sigma = 1.5$, $\mu = (5, 2, 3)$.}
\label{tab:1.0_mean_estimates_sd}
\end{table}
\begin{table}[ht]
\centering
\begin{tabular}{c|c|c|c|c|c|c}
  \hline
  $n$ & $\aes$ & $\bes$ & $\ses$ & $\hat{\mu}_1$ & $\hat{\mu}_2$ & $\hat{\mu}_3$ \\
  \hline
  300 & 1.5022 & -0.5030 & 1.4911 & 5.0013 & 2.0032 & 2.9936 \\
   & (0.0893) & (0.1565) & (0.0959) & (0.1018) & (0.0942) & (0.0942) \\
  500 & 1.4995 & -0.5036 & 1.4909 & 4.9967 & 2.0009 & 3.0038 \\
   & (0.0664) & (0.1218) & (0.0724) & (0.0712) & (0.0739) & (0.0739) \\
  \hline
\end{tabular}
\caption{Means and standard deviations of the estimates for the true values: $\alpha = 1.5$, $\beta = -0.5$, $\sigma = 1.5$, $\mu = (5, 2, 3)$.}
\label{tab:1.5_mean_estimates_sd}
\end{table}

\vspace{1em}




\bigskip

\noindent
\textbf{Acknowledgements.}
The authors are grateful to the reviewers for their valuable comments.
The first author (EK) thanks JGMI of Kyushu University for their support. This work was partially supported by JST CREST Grant Number JPMJCR2115 and JSPS KAKENHI Grant Number 22H01139, Japan (HM).

\bigskip

\bibliographystyle{abbrv} 

\end{document}